\newtheorem{theorem}{Theorem}
\newtheorem{corollary}[theorem]{Corollary}
\newtheorem{proposition}[theorem]{Proposition}
\newtheorem{lemma}[theorem]{Lemma}
\newtheorem*{theorem*}{Theorem}
\theoremstyle{definition}
\newtheorem{definition}{Definition}
\theoremstyle{remark}
\newtheorem{remark}{Remark}
\newtheorem{remarks}[remark]{Remarks}
\newtheorem{example}{Example}
\newtheorem{examples}[example]{Examples}
\newcommand{\HH}{\mathbb{H}}
\newcommand{\OO}{\Omega}
\newcommand{\C}{\mathbb{C}}
\newcommand{\R}{\mathbb{R}}
\newcommand{\sd}{\partial_s }
\newcommand{\is}{\mathbb{S}}
\newcommand\Oc{\mathbb O}
\def\SS{\mathbb S}
\newcommand\Ac{A_{\C}}
\newcommand\I{\mathcal I}
\newcommand\B{\mathcal B}
\newcommand\Sl{\mathcal S}
\def\dd#1#2{\dfrac{\partial#1}{\partial#2}} 
\def\dt#1#2{{\partial#1}/{\partial#2}}
\newcommand\IM{\operatorname{Im}}
\newcommand\RE{\operatorname{Re}}
\newcommand\Pin{\operatorname{Pin}}
\newcommand\Spin{\operatorname{Spin}}
\newcommand\Reg{\mathcal {R}}
\newcommand\SR{\Sl\Reg(\OO_D)}
\begin{document}

\title{Slice regular functions on real alternative algebras}
\author{R.\ Ghiloni, A.\ Perotti
\thanks{Work partially supported by MIUR (PRIN Project ``Propriet\`a geometriche delle variet\`a reali e com\-ples\-se") and GNSAGA of INdAM}
\\Department of Mathematics\\
  University of Trento\\ Via Sommarive, 14\\ I--38123 Povo Trento ITALY\\
  perotti@science.unitn.it}

\maketitle

\begin{abstract} 
In this paper we develop a theory of slice regular functions on a real alternative algebra $A$. Our approach is based on a well--known Fueter's construction.
Two recent function theories can be included in our general theory: the one of slice regular functions of a quaternionic or octonionic variable 
and the theory of slice monogenic functions of a Clifford variable.
Our approach permits to extend the range of these function theories and to obtain new results. In particular, 
we get a strong form of the fundamental theorem of algebra for an ample class of polynomials with coefficients in $A$ 
and we prove a Cauchy integral formula for slice functions of class $C^1$.

\end{abstract}

Keywords: {Functions of a hypercomplex variable, Quaternions, Octonions, Fundamental theorem of algebra, Cauchy integral formula}

Math.\ Subj.\ Class: {30C15, 30G35, 32A30,  17D05}

\section{Introduction}
\label{sec:Introduction}

In \cite{Fueter1934}, R.\ Fueter proposed a simple method, which is now known as Fueter's Theorem, to generate quaternionic regular functions using complex holomorphic functions (we refer the reader to \cite{Su} and \cite{GHS} for the theory of Fueter regular functions). 
Given a holomorphic function (the ``stem function'')
\[
F(z)=u(\alpha,\beta)+ i\, v(\alpha,\beta)\quad 
\text{($z=\alpha+i\beta$ complex, $u,v$  real--valued)}
\]
in the upper complex half-plane, real--valued on $\R$, the formula
\[f(q):=u\left(q_0,|\IM(q)|\right)+\frac{\IM(q)}{|\IM(q)|}v\left(q_0,|\IM(q)|\right)\]
($q=q_0+q_1i+q_2j+q_3k\in\HH$, $\IM(q)=q_1i+q_2j+q_3k$) gives rise to a \emph{radially holomorphic} function on $\HH$, whose Laplacian $\Delta f$ is {Fueter regular}.
Fueter's construction was later extended to higher dimensions by Sce \cite{Sce},  Qian \cite{Qian1997} and 
Sommen \cite{Sommen2000} in the setting of octonionic and Clifford analysis. 

By means of a slight modification of Fueter's construction, it is possible to obtain a more general class of functions. It is the class of \emph{slice regular} (or \emph{Cullen regu\-lar}) functions of a quaternionic variable that was recently introduced by Gentili and Struppa \cite{GeSt2006CR,GeSt2007Adv}.  This notion of regularity do not coincide with the one of Fueter. In fact, the set of slice regular functions on a ball $B_R$ centered in the origin of $\HH$ coincides with that of all power series $\sum_iq^ia_i$ that converges in $B_R$, which fail to be Fueter regular.
If $u$ and $v$ are \emph{$\HH$--valued} functions on the upper complex half-plane, then the same formula given above
defines a {Cullen regular} function on $\HH$, whose Laplacian $\Delta f$ is still {Fueter regular}.

In the present paper, we extend Fueter's construction in order to develop a theory of slice regular functions on a real alternative algebra $A$. 
These functions will be obtained simply by taking $A$--valued components $u,v$ of the stem function $F$. As stated before, if $A$ is the algebra of quaternions, we get the theory of Cullen regular functions. If $A$ is the algebra of octonions, we obtain the corresponding theory of regular functions already considered in \cite{GeStRocky,GeStoStVl} and \cite{GhPePreprint2009}.
If $A$ is the Clifford algebra $\R_n$ with signature $(0,n)$ (cf.\ e.g.\ \cite{GHS} for properties of these algebras), slice functions will be defined on a proper subset of $\R_n$, what we call the \emph{quadratic cone} of the algebra. In particular, by restricting the Clifford variables to the subspace of paravectors, which is contained in the quadratic cone, we get the theory of \emph{slice monogenic} functions introduced by Colombo, Sabadini and Struppa in \cite{CoSaSt2009Israel}.

We describe in more detail the structure of the paper.
In Section~\ref{sec:TheQuadraticCone}, we introduce some basic definitions about real alternative algebras with an antiinvolution. We define the normal cone and the quadratic cone of an algebra $A$ and prove (Proposition~\ref{pro3}) that the quadratic cone is a union of complex planes of $A$. This property is the starting point for the extension of Fueter's construction. In Section~\ref{sec:SliceFunctions}, we introduce complex intrinsic functions with values in the complexified algebra $A\otimes_\R\C$ and use them as stem functions to generate $A$--valued  \emph{(left) slice functions}.
This approach, not being based upon power series, does not require the holomorphy of the stem function. Moreover, slice functions can be defined also on domains which do not intersect the \emph{real axis} of $A$ (i.e.\ the subspace generated by the unity of $A$). In Propositions~\ref{pro4} and~\ref{pro5}, we show that the natural domains of definition for slice functions are ``circular'' domains of $A$, those that are invariant for the action of the square roots of $-1$. 

In Section~\ref{sec:SliceRegularFunctions}, we restrict our attention to slice functions with \emph{holomorphic}  stem function, what we call \emph{(left) slice regular} functions on $A$. These functions forms on every circular domain $\OO_D$ a real vector space $\SR$ 
that is not closed w.r.t.\  the pointwise product in $A$. However, the pointwise product for stem functions in the complexified algebra  induces a natural product on slice functions (cf.\ Section~\ref{sec:Productofslicefunctions}), that generalizes the usual product of polynomials and power series. 

In Sections~\ref{sec:NormalFunctionAndAdmissibility} and~\ref{sec:ZerosOfSliceFunctions}, we study the zero set of slice and slice regular functions. To this aim, a fundamental tool is the  \emph{normal function} of a slice function, which is defined by means of the product and the antiinvolution. This concept leads us to restrict our attention to \emph{admissible} slice regular functions, which preserve many relevant properties of classical holomorphic functions.
We generalize to our setting a structure theorem for the zero set proved   by Pogorui and Shapiro \cite{PogoruiShapiro} for quaternionic polynomials and by  Gentili and Stoppato \cite{GeSto2008Mich} for quaternionic power series. A Remainder Theorem (Theorem~\ref{Remainder_Theorem}) gives us the possibility to define a notion of multiplicity for zeros of an admissible slice regular function.

In Section~\ref{sec:FTA}, we prove a version of the fundamental theorem of algebra  for slice regular admissible {polynomials}.
This theorem was proved for quaternionic polynomials by Eilenberg and Niven \cite{Ni1941,EiNi1944} and for octonionic polynomials by Jou \cite{Jou}.
In \cite[pp.\ 308ff]{EiSt1952}, Eilenberg and Steenrod gave a topological proof of the theorem valid for a class of real algebras including $\C$, $\HH$ and $\Oc$. See also \cite{Topuridze}, \cite{RoO} and \cite{GeStVl2008Zeit} for other proofs.
Gordon and Motzkin \cite{GordonMotzkin} proved, for polynomials on a (associative) division ring, that the number of conjugacy classes containing zeros of $p$ cannot be greater than the degree $m$ of $p$. This estimate was improved on the quaternions by Pogorui and Shapiro \cite{PogoruiShapiro}: if $p$ has $s$ spherical zeros and $l$ non--spherical zeros, then $2s+l\le m$. 
Gentili and Struppa \cite{GeSt2008Milan} showed that, using the right definition of multiplicity, the number of zeros of $p$ equals the degree of the polynomial.
In \cite{GhPePreprint2009}, this strong form was generalized to the octonions. Recently, Colombo, Sabadini and Struppa \cite{CoSaSt2009Israel,CoSaStPreprintIsrael} and Yang and Qian \cite{YangQianActa} proved some results on the structure of the set of zeros of a   
polynomial with paravector coefficients in a Clifford algebra.

We obtain a strong form of the fundamental theorem of algebra, which contains and generalizes the above results. We prove (Theorem~\ref{FTA}) that the sum of the multiplicities of the zeros of a slice regular admissible polynomial is equal to its degree.

Section~\ref{sec:CauchyIntegralFormulaForSliceFunctions} contains a Cauchy integral formula for slice functions of class $C^1$. We define a \emph{Cauchy kernel}, which in the  quaternionic case was already introduced in  \cite{CoGeSaPreprint2008}, and for slice monogenic functions in \cite{CoSaPreprint2008,CoSaTM2009}.  This kernel was applied in \cite{CoSaStPreprint2009Mich} to get Cauchy formulas for $C^1$ functions on a class of domains intersecting the real axis.

\section[The quadratic cone]{The quadratic cone of a real alternative algebra}
\label{sec:TheQuadraticCone}

Let $A$ be a finite--dimensional  real alternative algebra with a unity. We refer to \cite{Numbers,Schafer} for the main properties of such algebras. We assume that $A$ has dimension $d>1$ as a real vector space. We will identify the field of real numbers with the subalgebra of $A$ generated by the unity. Recall that an algebra is alternative if the \emph{associator} $(x,y,z):=(xy)z-x(yz)$ of three elements of $A$ is an alternating function of its arguments.

In a real algebra, we can consider the \emph{imaginary space} consisting of all non--real elements whose square is real (cf.\ \cite[\S8.1]{Numbers}). 
\begin{definition}
Let 
$\IM(A):=\{x\in A\ |\ x^2\in\R,\ x\notin\R\setminus\{0\}\}.$
 The elements of\, $\IM(A)$ are called \emph{purely imaginary} elements of $A$.
\end{definition}

\begin{remark}
As we will see in the examples below,  in general, the imaginary space $\IM(A)$ is not a vector subspace of $A$. However, if $A$ is a quadratic algebra, then a Frobenius's Lemma tells that $\IM(A)$ is a  subspace of $A$ (cf.\ e.g.\ \cite[\S8.2.1]{Numbers}). 
\end{remark}

In what follows, we will assume that on $A$ an involutory antiautomorphism (also called an \emph{antiinvolution}) is fixed. It is a linear map $x\mapsto x^c$ of $A$ into $A$ satisfying the following properties:

\begin{itemize}
\item
		$(x^c)^c=x\quad \forall x\in A$
\item
		$(xy)^c=y^cx^c\quad \forall x,y\in A$
\item
		$x^c=x\text{\quad for every real }x$.
	\end{itemize}



\begin{definition}
For every element $x$ of $A$, the \emph{trace} of $x$ is $t(x):=x+x^c\in A$ and the (squared) \emph{norm} of $x$ is
$n(x):=xx^c\in A$. Note that we \emph{do not} assume that $t(x)$ and $n(x)$ are real for every $x\in A$.
\end{definition}


Now we introduce the main objects of this section, two subsets of $A$ that will be called respectively the \emph{normal cone} and the \emph{quadratic cone} of the algebra. The second name is justified by two properties proved below: the quadratic cone is a real cone whose elements satisfy a real quadratic equation.

\begin{definition}\label{def3}
We call \emph{normal cone} of the algebra $A$  the subset
\[N_A:=\{0\}\cup\{x\in A\ |\ n(x)=n(x^c)\text{ is a real nonzero number\,} \}.\]
The \emph{quadratic cone} of the algebra $A$ is the set
	\[Q_A:=\R\cup\{x\in A\ |\ t(x)\in\R,\ n(x)\in\R,\ 4n(x)> t(x)^2\}.\]
We also set
$\SS_A:=\{J\in Q_A\ |\ J^2=-1\}\subseteq \IM(A)$. Elements of $\SS_A$ will be called \emph{square roots of $-1$} in the algebra $A$.
\end{definition}


\begin{examples}
\label{ex1}

$(1)$
Let $A$ be the associative non--commutative algebra  $\HH$ of the quaternions or the alternative non--associative algebra $\Oc$ of the octonions. Let $x^c=\bar x$ be the usual conjugation mapping. Then $\IM(A)$ is a subspace of $A$, $A=\R\oplus\IM(A)$ and $Q_\HH=\HH$, $Q_\Oc=\Oc$. In these cases, $\SS_\HH$ is a two-dimensional sphere and $\SS_\Oc$ is a six-dimensional sphere. 

$(2)$
Let $A$ be the real Clifford algebra $Cl_{0,n}=\R_n$ with the \emph{Clifford conjugation} defined by
 \begin{align*}
 x^c&=([x]_0+[x]_1+[x]_2+[x]_3+[x]_4+\cdots)^c\\
 &=[x]_0-[x]_1-[x]_2+[x]_3+[x]_4-\cdots,
 \end{align*}
where $[x]_k$ denotes the $k$--vector component of $x$ in $\R_n$ (cf.\ for example \cite[\S 3.2]{GHS} for definitions and properties of Clifford numbers). If $n\ge3$, $\IM(A)$ is not  a subspace of $A$.
For example, the basis elements $e_1$, $e_{23}$ belong to $\IM(A)$, but their sum $e_1+e_{23}$ is not purely imaginary. 

For any $n>1$, the subspace of \emph{paravectors}, 
\[\R^{n+1}:=\{x\in\R_n\ |\ [x]_k=0\text{ for every }k>1\}\]
is a (proper) subset of the quadratic cone $Q_{\R_n}$. For $x\in\R^{n+1}$, $t(x)=2x_0\in\R$ and $n(x)=|x|^2\ge0$ (the euclidean norm). The $(n-1)$--dimensional sphere 
$\SS=\{x=x_1e_1+\cdots+ x_ne_n\in\R^{n+1}\ |\ x_1^2+\cdots +x_n^2=1\}$
of unit 1--vectors is (properly) contained in $\SS_{\R_n}$. The normal cone contains also the \emph{Clifford group} $\Gamma_n$ and its subgroups $\Pin(n)$ and $\Spin(n)$.

$(3)$
We consider in more detail the case of $\R_3$.
An element $x\in\R_3$ can be represented as a sum 
\[x=x_0+\sum_{i=1}^3x_ie_i+\mathop{\sum_{j,k=1}^{3}}_{j<k}x_{jk}e_{jk}+x_{123}e_{123} 
\]
with real coefficients $x_0,x_i, x_{jk}, x_{123}$.
A computation similar to the one made in the proof of Proposition 1 in \cite{GeSt2008COV} shows that 
\[\IM({\R_3})=\{x\in\R_3\ |\ x_0=0,\ x_1x_{23}-x_2x_{13}+x_3x_{12}=0\},\]
that the normal cone of $\R_3$ is
\[N_{\R_3}=\{x\in\R_3\ |\ x_0x_{123}-x_1x_{23}+x_2x_{13}-x_3x_{12}=0\}\]
and the quadratic cone is the six--dimensional real algebraic set
\[Q_{\R_3}=\{x\in\R_3\ |\ x_{123}=0,\ x_1x_{23}-x_2x_{13}+x_3x_{12}=0\}.\]
Finally, $\SS_{\R_3}$ is the intersection of a $5$--sphere with the hypersurface of $\R_3$ with equation $x_1x_{23}-x_2x_{13}+x_3x_{12}=0$:
\[\SS_{\R_3}=\{x\in Q_{\R_3}\ |\ x_0=0,\textstyle\sum_ix_i^2+\sum_{j,k}x_{jk}^2=1\}\subset \IM({\R_3})\cap Q_{\R_3}.\]

\end{examples}

\begin{remark}
As seen in the examples, the quadratic cone $Q_A$ needs not be a subalgebra or a subspace of $A$.
\end{remark}

\begin{proposition}\label{Properties_of_Q_A}
Let $A$ be a  real alternative algebra with a fixed antiinvolution $x\mapsto x^c$. The following statements hold.
\begin{itemize}
	\item[$(1)$]
	$\R\subseteq Q_A$ and $\alpha x\in Q_A\ \forall \alpha\in\R,x\in Q_A$.
		\item[$(2)$]
	For every $x\in Q_A, \alpha\in\R$, the sum $\alpha+x$ belongs to $Q_A$.
	\item[$(3)$]
	Every $x\in Q_A$ satisfies a real {quadratic equation}
	\[x^2-x\,t(x)+n(x)=0.\]
		\item[$(4)$]
		$x\in Q_A$ is equivalent to $x^c\in Q_A$. Moreover, $Q_A\subseteq N_A$.
	\item[$(5)$]
	Every nonzero $x\in N_A$ is invertible: $x^{-1}=\dfrac{x^c}{n(x)}.$
	\item[$(6)$]
$\SS_A=\{J\in A\;|\; t(J)=0,\; n(J)=1\}$.
	\item[$(7)$]
	$Q_A=A$ if and only if  $A$ is isomorphic to one of the division algebras $\C,\HH$ or $\Oc$.
\end{itemize}
\end{proposition}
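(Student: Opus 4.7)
The plan is to verify each of the seven items in order, since most follow directly from the definitions of trace, norm, and the quadratic cone, with the only real algebraic input being alternativity and a classification theorem at the end.

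For items (1) and (2), I would simply compute $t$ and $n$ on $\alpha x$ and $\alpha+x$. Scaling is immediate from $t(\alpha x)=\alpha t(x)$, $n(\alpha x)=\alpha^2 n(x)$, and for translation I would observe that $t(\alpha+x)=2\alpha+t(x)$ and $n(\alpha+x)=\alpha^2+\alpha t(x)+n(x)$, so the discriminant $4n-t^2$ is invariant under real translation. For (3), in the nonreal case I would just expand $x\,t(x)=x(x+x^c)=x^2+xx^c$, which uses only distributivity, and subtract $n(x)=xx^c$.

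Item (4) is the first place where something genuine happens: I need $x^c\in Q_A$ when $x\in Q_A$. The nontrivial point is that $n$ should be symmetric, i.e.\ $xx^c=x^cx$. I would derive this from (3): writing $x^c=t(x)-x$ and using that real scalars commute with $x$, I get $x^cx=t(x)x-x^2=n(x)=xx^c$. Then $t(x^c)=t(x)$, $n(x^c)=n(x)$, and the discriminant condition transfers. For the inclusion $Q_A\subseteq N_A$, the same identity gives $n(x)=n(x^c)\in\R$, and the condition $4n(x)>t(x)^2\ge 0$ forces $n(x)>0$ whenever $x$ is nonreal; the real case is trivial. Item (5) is then immediate from the definition of $N_A$: $xx^c=x^cx=n(x)\in\R^\ast$ shows $x^c/n(x)$ is a two-sided inverse, which in an alternative algebra is the multiplicative inverse.

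For (6), if $J\in\SS_A$ then $J\notin\R$ and (3) yields $-1-J\,t(J)+n(J)=0$, so $J\,t(J)=n(J)-1\in\R$; since $J\notin\R$ this forces $t(J)=0$ and then $n(J)=1$. Conversely, $t(J)=0,\ n(J)=1$ put $J$ in $Q_A$, and $J^c=-J$ together with $JJ^c=1$ gives $J^2=-1$.

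The main obstacle is (7). The easy direction is that in $\C,\HH,\Oc$ the standard conjugation makes $t(x),n(x)$ real with $n(x)=|x|^2\ge 0$, and the Cauchy--Schwarz-type inequality $4n(x)\ge t(x)^2$ is equivalent to $|\mathrm{Im}(x)|^2\ge 0$, giving $Q_A=A$. For the converse I would argue as follows: if $Q_A=A$ then $Q_A\subseteq N_A$ (by (4)) implies every nonzero element of $A$ is invertible by (5), so $A$ is a real alternative division algebra with unity; by the classical Frobenius--Zorn theorem the only such algebras are $\R,\C,\HH,\Oc$, and since $\dim_\R A>1$ by standing hypothesis, $A$ must be one of $\C,\HH,\Oc$. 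The hard part is really just citing the right classification result; everything else is bookkeeping with the identity $n(x)=xx^c=x^cx$ established in (4).
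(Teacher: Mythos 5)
Your proof is correct, and for most items it runs along the same lines as the paper's: the key identity $x^cx=xx^c$ obtained from $x^c=t(x)-x$, the quadratic equation by expansion, invertibility from $n(x)=n(x^c)\in\R\setminus\{0\}$. Where you genuinely diverge is item (6). The paper first proves $n(J)=1$ via the computation $1=n(J^2)=(JJ)(J^cJ^c)=n(J)^2$ (which needs Artin's theorem to reassociate), and then gets $t(J)=0$ from $J(J+J^c)=-1+n(J)=0$ by multiplying on the left by $J$ and using alternativity. You instead plug $J^2=-1$ into the quadratic equation of item (3) to get $J\,t(J)=n(J)-1\in\R$, and conclude $t(J)=0$ from $J\notin\R$; this is shorter, avoids Artin's theorem entirely, and exploits the fact that $t(J),n(J)\in\R$ is already built into the definition of $Q_A$ for nonreal elements. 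Both arguments are valid. In item (7) you also cite a slightly different classification: Zorn's theorem on finite-dimensional real alternative division algebras, whereas the paper invokes the Frobenius--Zorn result for \emph{quadratic} real alternative algebras without zero divisors, the quadratic hypothesis being supplied by item (3). Your version leans on the standing finite-dimensionality assumption and on the (routine but worth stating) fact that invertibility of all nonzero elements excludes zero divisors in an alternative algebra; the paper's version gets the quadratic hypothesis for free from (3). Either route closes the argument.
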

\begin{proof}
The first two statements are immediate consequences of the linearity of the antiinvolution.  For every $x\in Q_A$, the equality $n(x)=n(x^c)$ holds, since $x^c=t(x)-x$ commutes with $x$. From this fact comes the quadratic equation and also the fourth and fifth statement. We recall Artin's Theorem for alternative algebras (cf.\ \cite{Schafer}): the subalgebra generated by two elements is always associative. If $J\in\SS_A$, then $1=n(J^2)=(JJ)(J^c J^c)=J\,n(J^c)J^c=n(J)^2$. Then $n(J)=1$ and $J(J+J^c)=-1+n(J)=0$, from which it follows, multiplying by $J$ on the left, that $J+J^c=0$. Conversely, if $J+J^c=0$ and $JJ^c=1$, then $J^2=-JJ^c=-1$.

The last property follows from  properties (4),(5) and a result proved by Frobenius and Zorn (cf.\ \cite[\S8.2.4]{Numbers} and \S9.3.2), which states that if $A$ is a {quadratic,  real alternative algebra without divisors of zero}, then $A$ is isomorphic to one of the algebras $\C,\HH$ or $\Oc$.
\end{proof}

 
\begin{proposition}\label{pro2}
Let $A$ be a real alternative algebra with a fixed antiinvolution $x\mapsto x^c$. 
For every $x\in Q_A$, there exist uniquely determined elements $x_0\in\R$, $y\in \IM(A)\cap Q_A$, with $t(y)=0$, such that 
\[x=x_0+y.\]
\end{proposition}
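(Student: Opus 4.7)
The plan is to construct the decomposition explicitly by taking the obvious candidate and then to verify each required property using the quadratic identity from Proposition~\ref{Properties_of_Q_A}(3).

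First I would set $x_0 := t(x)/2 \in \R$ (which is allowed since $t(x) \in \R$ whenever $x \in Q_A$) and $y := x - x_0 = (x - x^c)/2$. Linearity of the antiinvolution gives $y^c = x^c - x_0 = -y$, so $t(y) = y + y^c = 0$ at once. Substituting $x = x_0 + y$ into the quadratic equation $x^2 - x\,t(x) + n(x) = 0$ and using $t(x) = 2x_0$ yields
\[
y^2 = (x - x_0)^2 = x^2 - 2x_0 x + x_0^2 = x\,t(x) - n(x) - 2x_0 x + x_0^2 = x_0^2 - n(x) \in \R.
\]
Thus $y^2$ is real, so either $y = 0$ (which forces $x = x_0 \in \R \subseteq Q_A$) or $y \notin \R$: indeed, if $y$ were a nonzero real, then $t(y) = 2y \neq 0$, contradicting $t(y) = 0$. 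In either case $y \in \IM(A)$ by the convention of Definition~1.

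Next I would check that $y \in Q_A$. We already have $t(y) = 0 \in \R$, and $n(y) = y y^c = -y^2 = n(x) - x_0^2 \in \R$. If $y \neq 0$, the defining inequality $4n(x) > t(x)^2 = 4x_0^2$ gives $4n(y) = 4(n(x) - x_0^2) > 0 = t(y)^2$, so $y \in Q_A$; if $y = 0$, then $y \in \R \subseteq Q_A$. This establishes existence.

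For uniqueness, suppose $x = x_0' + y'$ is another such decomposition with $x_0' \in \R$ and $y' \in \IM(A) \cap Q_A$ satisfying $t(y') = 0$. Applying the trace and using its linearity together with $t(\alpha) = 2\alpha$ for $\alpha \in \R$, we obtain $t(x) = 2 x_0' + t(y') = 2 x_0'$, so $x_0' = t(x)/2 = x_0$, whence $y' = x - x_0' = y$. No genuine obstacle is foreseen; the only point requiring care is deducing $y \in \IM(A)$ in the degenerate case $y = 0$, which is handled by the convention adopted in Definition~1.
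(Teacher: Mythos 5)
Your proof is correct and follows essentially the same route as the paper: the same candidates $x_0=t(x)/2$, $y=(x-x^c)/2$, the same use of the inequality $4n(x)>t(x)^2$ to place $y$ in $\IM(A)$, and the same trace argument for uniqueness (your computation of $y^2$ via the quadratic identity of Proposition~\ref{Properties_of_Q_A}(3) is just a repackaging of the paper's direct expansion of $(x-x^c)^2$). The only difference is cosmetic: you explicitly verify $y\in Q_A$ and handle the degenerate case $y=0$, points the paper leaves implicit.
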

\begin{proof}
Since $x\in Q_A$, $x_0:=(x+x^c)/2=t(x)/2$ is real. It remains to prove that $y:=(x-x^c)/2$ belongs to $\IM(A)$ when $x\in Q_A\setminus\R$.
Since $y+y^c=0$, $y\notin\R\setminus\{0\}$.
Condition $4n(x)> t(x)^2$ given in the definition of the quadratic cone assures that $y$ belongs to $\IM(A)$:
\[4y^2=-4n(y)=(x-x^c)^2=x^2-2n(x)+(x^c)^2=\]\[=t(x)x-n(x)-2n(x)+t(x^c)x^c-n(x^c)=t(x)^2-4n(x)<0.\]
Uniqueness follows from the equality $t(x)=t(x_0)=x_0$.
\end{proof}

Using the notation of the above proposition, for every $x\in Q_A$, we set $\RE(x):=x_0=\frac{x+x^c}2$, $\IM(x):=y=\frac{x-x^c}2$. Therefore $x=\RE(x)+\IM(x)$ for every $x\in Q_A$. The norm of $x\in Q_A$ is given by the formula
\[n(x)=(x_0+y)(x_0+y^c)=x_0^2-y^2=(\RE(x))^2-(\IM(x))^2.\]

For every $J\in \SS_A$ square root of $-1$, we will denote by $\C_J:=\langle 1,J\rangle\simeq\C$  the subalgebra of $A$ generated by  $J$. 

\begin{proposition}\label{pro3}
Under the same assumptions of the preceding proposition, if $Q_A\ne\R$, the following statements hold:
\begin{itemize}
\item[$(1)$]
$Q_A=\bigcup_{J\in \SS_A}\C_J$.
 \item[$(2)$]
 If  $I,J\in\SS_A$, $I\ne\pm J$, then $\C_I\cap\C_J=\R$.
\end{itemize}
\end{proposition}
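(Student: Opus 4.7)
I would prove the two assertions separately, using Propositions~\ref{Properties_of_Q_A} and~\ref{pro2} as the main tools.

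For statement (1), I would establish the two inclusions. The inclusion $\bigcup_{J\in\SS_A}\C_J\subseteq Q_A$ is the easy half: any element of $\C_J$ has the form $\alpha+\beta J$ with $\alpha,\beta\in\R$; since $J\in\SS_A\subseteq Q_A$, parts (1) and (2) of Proposition~\ref{Properties_of_Q_A} give $\beta J\in Q_A$ and then $\alpha+\beta J\in Q_A$. For the reverse inclusion, take $x\in Q_A$. If $x\in\R$, pick any $J\in\SS_A$ (non-empty because $Q_A\neq\R$, as we will verify in a moment) and note $x\in\C_J$. Otherwise, apply Proposition~\ref{pro2} to decompose $x=x_0+y$ with $x_0\in\R$ and $y\in\IM(A)\cap Q_A$, $t(y)=0$. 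The key technical point is to produce $J\in\SS_A$ with $y\in\R J$: I would set $J:=y/\sqrt{n(y)}$ and verify $t(J)=0$ and $n(J)=1$, so that by Proposition~\ref{Properties_of_Q_A}(6) we get $J\in\SS_A$ and hence $x=x_0+\sqrt{n(y)}\,J\in\C_J$.

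The main obstacle in (1) is justifying the square root $\sqrt{n(y)}$, i.e.\ showing $n(y)>0$ when $y\neq 0$. This is exactly the computation that already appears inside the proof of Proposition~\ref{pro2}: from $t(y)=0$ one has $n(y)=-y^2$, and the strict inequality $4n(x)>t(x)^2$ in the definition of $Q_A$ yields $4y^2=t(x)^2-4n(x)<0$, so $n(y)>0$. The same reasoning (applied to any non-real $x\in Q_A$) also shows that $\SS_A$ is non-empty whenever $Q_A\neq\R$, which is what made the choice of $J$ possible in the real case above.

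For statement (2), I would exploit the fact that for any $J\in\SS_A$, Proposition~\ref{Properties_of_Q_A}(6) gives $J^c=-J$, so the antiinvolution restricted to $\C_J$ is ordinary complex conjugation: $(a+bJ)^c=a-bJ$ for $a,b\in\R$. Now take $z\in\C_I\cap\C_J$ and write $z=a+bI=c+dJ$. Computing $z-z^c$ in both representations yields $2bI=2dJ$, hence $bI=dJ$. Taking norms and using $n(I)=n(J)=1$ gives $b^2=d^2$, so $b=\pm d$. If $b\neq 0$, dividing by $b$ yields $I=\pm J$, contradicting the hypothesis; therefore $b=0$, and then $dJ=0$ forces $d=0$ as well, so $z=a\in\R$. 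This establishes $\C_I\cap\C_J\subseteq\R$, and the reverse inclusion is trivial.
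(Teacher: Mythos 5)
Your proof is correct. For part (1) you follow essentially the same route as the paper: decompose $x=x_0+y$ via Proposition~\ref{pro2}, check $n(y)>0$ from the strict inequality $4n(x)>t(x)^2$, set $J=y/\sqrt{n(y)}$, and get the reverse inclusion from parts (1) and (2) of Proposition~\ref{Properties_of_Q_A}; the only cosmetic difference is that you certify $J\in\SS_A$ via the characterization $t(J)=0$, $n(J)=1$ of Proposition~\ref{Properties_of_Q_A}(6), whereas the paper computes $J^2=-1$ directly and uses that $J$ is a real multiple of $y\in Q_A$. Your explicit remark that the same computation shows $\SS_A\ne\emptyset$ when $Q_A\ne\R$ is a small but genuine improvement in completeness. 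For part (2) you diverge from the paper: the paper deduces the claim from the Independence Lemma for alternative algebras (if $I,J\in\SS_A$ with $I\ne\pm J$ are linearly independent, then $\{1,I,J\}$ is linearly independent, which is equivalent to $\C_I\cap\C_J=\R$), while you give a self-contained computation: from $z=a+bI=c+dJ$ and $I^c=-I$, $J^c=-J$ you extract $bI=dJ$, compare norms (or simply square both sides, giving $-b^2=-d^2$) to get $b=\pm d$, and conclude $I=\pm J$ unless $b=d=0$. Your argument is more elementary and avoids importing a structural lemma about the imaginary space of an alternative algebra; the paper's citation is shorter but leans on an external result. Both are valid, and your version has the merit of using only the antiinvolution and the defining properties of $\SS_A$ already established in Proposition~\ref{Properties_of_Q_A}.
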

\begin{proof}
If $x\in Q_A\setminus\R$ and $y=\IM(x)\ne0$, set $J:={y}/{\sqrt{n(y)}}$. Then $J^2=y^2/{n(y)}=-1$ and therefore $J\in\SS_A$ and $x=\RE(x)+\sqrt{n(y)}J\in\C_J$. Conversely, from Proposition~\ref{Properties_of_Q_A} (properties (1),(2)) every complex plane $\C_J$ is contained in $Q_A$. 

The second statement follows from the Independence Lemma for alternative algebras (cf.\ \cite[\S8.1]{Numbers}): since two elements $I,J\in\SS_A$, $I\ne\pm J$, are linearly independent, then also the triple $\{1,I,J\}$ is linearly independent. This last condition is equivalent to $\C_I\cap\C_J=\R$.
\end{proof}

\begin{corollary}
For every $x\in Q_A$, also the powers $x^k$ belong to $Q_A$.
\end{corollary}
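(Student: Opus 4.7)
The plan is to deduce the corollary directly from Proposition~\ref{pro3}(1). If $x \in \R$, then every power $x^k$ is real, hence lies in $Q_A$ by Proposition~\ref{Properties_of_Q_A}(1). Otherwise, Proposition~\ref{pro3}(1) gives a square root of $-1$, say $J \in \SS_A$, such that $x \in \C_J$. So it suffices to check that $\C_J$ is closed under the algebra multiplication and that $\C_J \subseteq Q_A$; the latter inclusion is already part of the statement of Proposition~\ref{pro3}(1).

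For closure under multiplication, I would observe that $\C_J = \langle 1, J \rangle = \{a + bJ : a, b \in \R\}$ is a two-dimensional real subspace, and by Artin's Theorem (invoked already in the proof of Proposition~\ref{Properties_of_Q_A}), the subalgebra of $A$ generated by the single element $J$ is associative; hence, iterated products such as $x^k$ are unambiguously defined. A direct computation using $J^2 = -1$ shows $(a_1 + b_1 J)(a_2 + b_2 J) = (a_1 a_2 - b_1 b_2) + (a_1 b_2 + a_2 b_1) J \in \C_J$, so $\C_J$ is closed under multiplication, and in particular $x^k \in \C_J$ for every $k \in \N$.

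Combining the two cases, $x^k \in \C_J \subseteq Q_A$ (or $x^k \in \R \subseteq Q_A$ when $x$ is real), which is the desired conclusion. I do not anticipate any real obstacle here: the corollary is essentially a bookkeeping consequence of the slice decomposition of $Q_A$ together with associativity of each slice $\C_J$; the only point that requires a moment's care is that, in a non-associative algebra, one must justify why $x^k$ is well-defined at all, and this is precisely what Artin's Theorem supplies.
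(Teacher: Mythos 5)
Your proof is correct and follows the same route the paper intends: the corollary is stated immediately after Proposition~\ref{pro3} precisely so that one reads $x\in\C_J$ for some $J\in\SS_A$, observes that $\C_J=\langle 1,J\rangle$ is a (commutative, associative) subalgebra isomorphic to $\C$ and hence closed under powers, and concludes $x^k\in\C_J\subseteq Q_A$. Your extra remark that Artin's Theorem guarantees $x^k$ is unambiguous in the non-associative setting is a sensible precaution, though it is already implicit in the paper's description of $\C_J$ as the subalgebra generated by $J$.
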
 

\section[Slice functions]{Slice functions}
\label{sec:SliceFunctions}

\subsection{A--stem functions}
\label{sec:AstemFunctions}
In this and the following sections, $A$ will denote a real alternative algebra with a fixed antiinvolution $x\mapsto x^c$. We will always assume that $Q_A\ne\R$, i.e.\ that $\SS_A\ne\emptyset$.

Let $\Ac=A\otimes_{\R}\C$ be the complexification of $A$. We will use the representation
  \[\Ac=\{w=x+iy\ |\ x,y\in A\}\quad  (i^2=-1).\]  
  $\Ac$ is a complex alternative algebra with a unity w.r.t.\ the product given by the formula
  \[(x+iy)(x'+iy')=xx'-yy'+i(xy'+yx').\]
  The algebra $A$ can be identified with the real subalgebra $A':=\{w=x+iy\ |\ y=0\}$ of $\Ac$ and the unity of $\Ac$ then coincides with the one of $A$.
In  $\Ac$ two commuting operators are defined: the \emph{complex--linear antiinvolution} $w\mapsto w^c=(x+iy)^c=x^c+iy^c$ and the
  \emph{complex conjugation} defined by $\overline w=\overline{x+iy}=x-iy$.

\begin{definition}
Let $D\subseteq\C$ be an open subset. If a function $F: D\rightarrow\Ac$ is \emph{complex intrinsic},  i.e.\ it satisfies the condition \[F(\overline z)=\overline{F(z)}\text{\quad for every $z\in D$ such that $\overline z\in D$},\] then $F$  is called an \emph{$A$--stem function} on $D$.
\end{definition}

The notion of \emph{stem function} was introduced by Fueter in \cite{Fueter1934} for complex--valued functions in order to construct \emph{radially holomorphic} functions on the space of quaternions.

In the preceding definition, there is no restriction to assume that $D$ is symmetric w.r.t.\ the real axis, i.e.\ $D=\text{conj}(D):=\{z\in\C\ |\ \bar z\in D\}$. In fact, if this is not the case, the function $F$ can be extended to $D\cup\text{conj}(D)$ by imposing complex intrinsicity. It must be noted, however, that the extended set will be non--connected if $D\cap\R=\emptyset$.

\begin{remarks}\label{rem1}
$(1)$ 
A function $F$ is an $A$--stem function if and only if	the $A$--valued components $F_1,F_2$ of $F=F_1+iF_2$ form an \emph{even--odd pair} w.r.t.\ the imaginary part of $z$, i.e.
	\[F_1(\overline z)=F_1(z),\  F_2(\overline z)=-F_2(z)\text{\quad for every $z\in D$.}\]

$(2)$
	Let $d$ be the dimension of $A$ as a real vector space. By means of a basis $\B=\{u_k\}_{k=1,\ldots, d}$ of $A$, $F$ can be identified with a complex intrinsic curve in $\C^d$. Let $F(z)=F_1(z)+iF_2(z)=\sum_{k=1}^d F_\B^k(z)u_k$, with $F_\B^k(z)\in\C$. Then \[\widetilde{F}_\B=(F_\B^1,\ldots, F_\B^d):D\rightarrow\C^d\]
	satisfies $\widetilde{F}_\B(\overline z)=\overline{\widetilde{F}_\B(z)}$. Giving to $A$ the unique manifold structure as a real vector space, we get that a stem function $F$ is of class $C^k$ ($k=0,1,\ldots,\infty$) or real--analytic if and only if the same property holds for $\widetilde{F}_\B$. This notion is clearly independent of the choice of the basis of $A$.
\end{remarks}

\subsection[Slice functions]{A--valued slice functions}
\label{sec:Avaluedslicefunctions}

Given an open subset $D$ of $\C$, let $\OO_D$ be the subset of $A$ obtained by the action on $D$ of the square roots of $-1$:
\[\OO_D:=\{x=\alpha+\beta J\in\C_J\ |\ \alpha,\beta\in\R,\ \alpha+i\beta\in D,\ J\in\SS_A\}.\]
Sets of this type will be called \emph{circular sets} in $A$. It follows from Proposition~\ref{pro3} that $\OO_D$ is a relatively open subset of the quadratic cone $Q_A$. 

\begin{definition}
Any stem function $F:D\rightarrow\Ac$ induces a \emph{left slice function} $f=\I(F):\OO_D\rightarrow A$. If $x=\alpha+\beta J\in D_J:=\OO_D\cap \C_J$, we set  
\[f(x):=F_1(z)+JF_2(z)\quad (z=\alpha+i\beta).\]
\end{definition}

The slice function $f$ is well--defined, since $(F_1,F_2)$ is an even--odd pair w.r.t.\ $\beta$ and then
$f(\alpha+(-\beta)(-J))=F_1(\overline z)+(-J)F_2(\overline z)=F_1(z)+JF_2(z)$. There is an analogous definition for \emph{right} slice functions when the element $J\in\SS_A$ is placed on the right of $F_2(z)$. In what follows, 
the term \emph{slice functions} will always mean left slice functions.
 
We will denote the set of (left) slice functions on $\OO_D$ by
\[\Sl(\OO_D):=\{f:\OO_D\rightarrow A\ |\ f=\I(F),\ F:D\rightarrow A_{\C} \text{ $A$--stem function}\}.\]

\begin{remark}
$\Sl(\OO_D)$ is a real vector space, since $\I(F+G)=\I(F)+\I(G)$ and $\I(Fa)=\I(F)a$ for every complex intrinsic functions $F$, $G$ and every $a\in \R$. 
\end{remark}

\begin{examples}
\label{ex2}
Assume $A=\HH$ or $A=\Oc$, with the usual conjugation mapping. 

$(1)$
For any element $a\in A$,
 $F(z):=z^na=\RE(z^n)a+i\, (\IM(z^n)a)$ induces the monomial $f(x)=x^na\in\Sl(A)$.

$(2)$
 By linearity,
 we get all the \emph{standard polynomials}\, $p(x)=\sum_{j=0}^n x^ja_j$ with right quaternionic or octonionic coefficients. More generally, every convergent \emph{power series}\, $\sum_jx^ja_j$, with (possibly infinite) convergence radius $R$ (w.r.t.\ $|x|=\sqrt{n(x)}$), belongs to the space $\Sl(B_R)$, where $B_R$ is the open ball of $A$ centered in the origin with radius $R$.

$(3)$
 Also $G(z):=\RE(z^n)a$ and  $H(z):=i\, (\IM(z^n)a)$ are complex intrinsic on $\C$. They induce respectively the slice functions
  $g(x)=\RE(x^n)a$ and $h(x)=f(x)-g(x)=(x^n-\RE(x^n))a$ on $A$.
 The difference $G(z)-H(z)=\bar z^n a$ induces $g(x)-h(x)=(2\RE(x^n)-x^n)a=
 \bar x^na\in\Sl(A)$.

The above examples generalize to standard polynomials in $x=\I(z)$ and $x^c=\I(\bar z)$ with coefficients in  $A$. The domain of slice polynomial functions or series must be restricted to subsets of the quadratic cone. For example, when $A=\R_3$, standard polynomials in $x$ with right Clifford coefficients can be considered as slice functions for $x\in\R_3$ such that $x_{123}=0,\ x_1x_{23}-x_2x_{13}+x_3x_{12}=0$ (cf.\ Example~\ref{ex1}(3) in Section \ref{sec:TheQuadraticCone}). In particular, they are defined on the space of {paravectors} $\R^4=\{x\in\R_3\ |\ x_{12}=x_{13}=x_{23}=x_{123}=0\}$. 
\end{examples}

\subsection{Representation formulas}
\label{sec:RepresentationFormulas}

For an element $J\in\SS_A$, let $\C_J^+$ denote the upper half plane
\[\C_J^+=\{x=\alpha+\beta J\in A\ |\ \beta\ge0\}.\]

\begin{proposition}\label{pro4}
Let $J,K\in\SS_A$ with $J-K$ invertible.
Every slice function $f\in\Sl(\OO_D)$ is uniquely determined by its values on the {two} distinct half planes $\C_J^+$ and $\C_K^+$. In particular, taking $K=-J$, we have that $f$ is uniquely determined by its values on a complex plane $\C_J$.
\end{proposition}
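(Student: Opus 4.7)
The plan is to show that the stem function $F=F_1+iF_2$ defining $f=\I(F)$ can be recovered pointwise from the values of $f$ on $\C_J^+\cup\C_K^+$, since $f$ is then determined by $F$.

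First I would fix $z=\alpha+i\beta\in D$ with $\beta\ge 0$, and evaluate $f$ at the two points $x_J=\alpha+\beta J\in\C_J^+$ and $x_K=\alpha+\beta K\in\C_K^+$ (both of which lie in $\OO_D$). By the definition of a slice function,
\[
f(x_J)=F_1(z)+JF_2(z),\qquad f(x_K)=F_1(z)+KF_2(z).
\]
Subtracting gives $(J-K)F_2(z)=f(x_J)-f(x_K)$. Since by hypothesis $J-K$ is invertible in $A$, and by Artin's theorem the subalgebra generated by $J-K$ and $F_2(z)$ is associative, we can cancel on the left to obtain
\[
F_2(z)=(J-K)^{-1}\bigl(f(x_J)-f(x_K)\bigr),\qquad F_1(z)=f(x_J)-JF_2(z).
\]
Thus $F_1(z)$ and $F_2(z)$ are uniquely determined by the values of $f$ on the two upper half planes, for every $z$ in $D^+:=\{z\in D\,:\,\IM z\ge 0\}$.

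Next, I would extend to all of $D$ by invoking the even--odd pair property recalled in Remark~\ref{rem1}: for $z\in D$ with $\IM z<0$ one has $\bar z\in D^+$ and
\[
F_1(z)=F_1(\bar z),\qquad F_2(z)=-F_2(\bar z),
\]
so $F$ is determined on all of $D$. Hence $f=\I(F)$ is determined globally on $\OO_D$.

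Finally, for the particular case $K=-J$, the difference $J-K=2J$ is invertible (since $J^2=-1$ gives $J^{-1}=-J$), so the hypothesis of the proposition is satisfied; moreover $\C_J^+\cup\C_{-J}^+=\C_J$, yielding the stated conclusion. The only subtle point is the manipulation $(J-K)^{-1}((J-K)F_2(z))=F_2(z)$ in the non-associative setting, but this is precisely where Artin's theorem applies, so there is no real obstacle.
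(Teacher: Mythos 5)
Your proof is correct and follows essentially the same route as the paper's: recover $F_2(z)$ for $\IM z\ge 0$ from the difference $f(\alpha+\beta J)-f(\alpha+\beta K)=(J-K)F_2(\alpha+i\beta)$ using the invertibility of $J-K$, then recover $F_1$, and extend to $\IM z<0$ via the even--odd property of the stem function. The cancellation $(J-K)^{-1}\bigl((J-K)F_2(z)\bigr)=F_2(z)$ that you justify via Artin's theorem is exactly the alternative-algebra identity $a(a^{-1}b)=b$ that the paper invokes in the proof of the representation formula, so there is no gap.
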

\begin{proof}
For $f\in\Sl(\OO_D)$, let $f_J^+$ be the restriction 
\[f_J^+:=f\,_{|\C_J^+\cap\OO_D}\ :\ \C_J^+\cap\OO_D\rightarrow A.\]
It is sufficient to show that the stem function $F$ such that $\I(F)=f$ can be recovered from the restrictions $f_J^+$, $f_K^+$. This follows from the formula
\[f_J^+(\alpha+\beta J)-f_K^+(\alpha+\beta K)=(J-K)F_2(\alpha+i\beta)\]
which holds for every $\alpha+i\beta\in D$ with $\beta\ge0$. In particular, it implies the vanishing of $F_2$ when $\beta=0$. Then $F_2$ is determined also for $\beta<0$ by imposing oddness w.r.t.\ $\beta$. Moreover, the formula
\[f_J^+(\alpha+\beta J)-JF_2(\alpha+i\beta)=F_1(\alpha+i\beta)\]
defines the first component $F_1$ as an even function on $D$.
\end{proof}

We also obtain the following \emph{representation formulas for slice functions}.

\begin{proposition}\label{pro5}
Let $f\in\Sl(\OO_D)$. Let $J,K\in\SS_A$ with $J-K$ invertible. Then the following formula holds:
\begin{equation}\label{rep1}
f(x)=(I-K)\left((J-K)^{-1}f(\alpha+\beta J)\right)-(I-J)\left((J-K)^{-1}f(\alpha+\beta K)\right)
\end{equation}
for every $I\in\SS_A$ and for every $x=\alpha+\beta I\in D_I=\OO_D\cap\C_I$. In particular, for $K=-J$, we get the formula
\begin{equation}\label{rep2}
f(x)=\frac12\left(f(\alpha+\beta J)+f(\alpha-\beta J)\right)-\frac{I}2\left(J\left(f(\alpha+\beta J)-f(\alpha-\beta J)\right)\right)
\end{equation}
\end{proposition}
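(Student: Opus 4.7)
I will recover the stem-function components $F_1,F_2$ from the restrictions $f_J^+$ and $f_K^+$ exactly as in the proof of Proposition~\ref{pro4}, substitute them into the defining relation $f(\alpha+\beta I)=F_1(z)+IF_2(z)$, and then verify the two formulas by elementary manipulation. The technical subtlety is that $A$ is only alternative, so every regrouping of products has to be justified either by left/right distributivity (which hold in any algebra) or by Artin's theorem, which guarantees that any two elements of $A$ generate an associative subalgebra.

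Fix first $z=\alpha+i\beta\in D$ with $\beta\ge 0$. From $f=\I(F)$ one has
\[
f(\alpha+\beta J)-f(\alpha+\beta K)=(J-K)F_2(z),
\]
and since $J-K$ is invertible and the subalgebra generated by $J-K$ together with any single element is associative, the identity $a(a^{-1}b)=b$ gives
\[
F_2(z)=(J-K)^{-1}f(\alpha+\beta J)-(J-K)^{-1}f(\alpha+\beta K),\qquad F_1(z)=f(\alpha+\beta J)-JF_2(z).
\]
The case $\beta<0$ reduces to this one by the even--odd parity of $(F_1,F_2)$, so the identities extend uniformly over $D_I$.

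Substituting into $f(\alpha+\beta I)=F_1(z)+IF_2(z)$ and using right distributivity yields
\[
f(x)=f(\alpha+\beta J)+(I-J)\bigl[(J-K)^{-1}f(\alpha+\beta J)\bigr]-(I-J)\bigl[(J-K)^{-1}f(\alpha+\beta K)\bigr].
\]
To match (\ref{rep1}) it remains to verify $f(\alpha+\beta J)+(I-J)[(J-K)^{-1}u]=(I-K)[(J-K)^{-1}u]$ for $u=f(\alpha+\beta J)$. Writing $a=(J-K)^{-1}$, right distributivity gives $(I-K)(au)-(I-J)(au)=(J-K)(au)=(J-K)[(J-K)^{-1}u]$, and this last quantity equals $u$, again by $a(a^{-1}u)=u$ inside the associative subalgebra generated by $J-K$ and $u$. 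This proves (\ref{rep1}).

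For (\ref{rep2}) I specialise $K=-J$. Then $J-K=2J$ is invertible with $(2J)^{-1}=-J/2$ because $J^2=-1$, so $(2J)(-J/2)=1$. Substituting in (\ref{rep1}) and using the left-alternative identity $J(Jw)=J^2w=-w$, each of the two bracketed terms reduces:
\[
(I+J)\bigl[(-J/2)f(\alpha+\beta J)\bigr]=\tfrac{1}{2}f(\alpha+\beta J)-\tfrac{1}{2}I\bigl(Jf(\alpha+\beta J)\bigr),
\]
\[
-(I-J)\bigl[(-J/2)f(\alpha-\beta J)\bigr]=\tfrac{1}{2}f(\alpha-\beta J)+\tfrac{1}{2}I\bigl(Jf(\alpha-\beta J)\bigr).
\]
Summing and collecting by the same distributivity arguments gives (\ref{rep2}).

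\textbf{Main obstacle.} The only non-routine point is to keep every product inside a two-generator subalgebra so that Artin's theorem applies: each use of $a(a^{-1}b)=b$ and each simplification $J(Jw)=-w$ involves at most two distinct elements of $A$ besides the one being multiplied, so alternativity suffices. No associativity among three independent elements is needed at any step.
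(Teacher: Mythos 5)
Your proof is correct and follows essentially the same route as the paper's: recover $F_1$ and $F_2$ from the restrictions as in Proposition~\ref{pro4}, substitute into $f(\alpha+\beta I)=F_1(z)+IF_2(z)$, and regroup using distributivity together with $a(a^{-1}b)=b$; your explicit verification of the $K=-J$ specialization, which the paper leaves implicit, is also fine. The only cosmetic difference is in the justification of $a(a^{-1}b)=b$: the paper invokes the inverse property of alternative algebras (vanishing of the associator $(a,a^{-1},b)$) rather than Artin's theorem on two-generated subalgebras, which is the safer formulation since $a^{-1}$ need not a priori lie in the subalgebra generated by $a$ and $b$.
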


\begin{proof}Let $z=\alpha+i\beta$.
From the proof of the preceding proposition, we get 
\[F_2(z)=(J-K)^{-1}\left(f(\alpha+\beta J)-f(\alpha+\beta K)\right)\text{\quad and}\]
\[F_1(z)=f(\alpha+\beta J)-JF_2(z)=f(\alpha+\beta J)-J((J-K)^{-1}\left(f(\alpha+\beta J)-f(\alpha+\beta K)\right).\]
Therefore
\begin{align*}
f(\alpha+\beta I)&=f(\alpha+\beta J)+(I-J)((J-K)^{-1}\left(f(\alpha+\beta J)-f(\alpha+\beta K)\right))=\\
&=((J-K)+(I-J))((J-K)^{-1}f(\alpha+\beta J))-(I-J)((J-K)^{-1}f(\alpha+\beta K))=\\
&=(I-K)\left((J-K)^{-1}f(\alpha+\beta J)\right)-(I-J)\left((J-K)^{-1}f(\alpha+\beta K)\right).
\end{align*}
We used the fact that, in any alternative algebra $A$, the equality $b=(aa^{-1})b=a(a^{-1}b)$ holds for every elements $a,b$ in $A$, $a$ invertible.
\end{proof}

Representation formulas for quaternionic Cullen regular functions appeared in \cite{CoGeSaPreprint2008} and \cite{CoGeSaSt2009Adv}. For slice monogenic functions of a Clifford variable, they were given in \cite{CoSaPreprint2008,CoSaTM2009}.
 
In the particular case when $I=J$, formula \eqref{rep2} reduces to the trivial decomposition
\[f(x)=\frac12\left(f(x)+f(x^c)\right)+\frac{1}2\left(f(x)-f(x^c)\right).\]
But $\frac12\left(f(x)+f(x^c)\right)=F_1(z)$ and $\frac{1}2\left(f(x)-f(x^c)\right)=JF_2(z)$, where $x=\alpha+\beta J$, $z=\alpha+i\beta$. 

\begin{definition}
Let $f\in\Sl(\OO_D)$. We call \emph{spherical value of $f$ in $x\in\OO_D$} the element of $A$
\[v_s f(x):=\frac12\left(f(x)+f(x^c)\right).\]
We call \emph{spherical derivative of $f$ in $x\in\OO_D\setminus\R$} the element of $A$
\[\partial_s f(x):=\frac12 \IM(x)^{-1}(f(x)-f(x^c)).\]
\end{definition}

In this way, we get two slice functions associated with $f$: $v_s f$ is induced on $\OO_D$ by the stem function $F_1(z)$ and $\partial_s f$ is induced on $\OO_D\setminus\R$ by ${F_2(z)}/{\IM(z)}$. Since these stem functions are $A$--valued, $v_s f$ and $\sd f$ are constant on every ``sphere'' 
\[\SS_x:=\{y\in Q_A\ |\ y=\alpha+\beta I,\ I\in\SS_A\}.\]
Therefore $\sd(\sd f)=0$ and $\sd(v_s f)=0$ for every $f$.
Moreover, $\sd f(x)=0$ if and only if $f$ is constant on $\SS_x$. In this case, $f$ has value $v_s f(x)$ on $\SS_x$.
If $\OO_D\cap\R\ne\emptyset$, under mild regularity conditions on $F$, we get that $\sd f$ can be continuously extended as a slice function on $\OO_D$. For example, it is sufficient to assume that $F_2(z)$ is of class $C^1$. By definition, the following identity holds for every $x\in\OO_D$:
\[f(x)=v_s f(x)+\IM(x)\,\sd f(x).\]

\begin{example}
Simple computations show that $v_s x^n=\frac12t(x^n)$ for every $n$, while
\[\sd x=1,\quad \sd x^2=x+x^c=t(x),\quad \sd x^3
=x^2+(x^c)^2+xx^c=t(x^2)+n(x).\] 
In general, the spherical derivative of a polynomial $\sum_{j=0}^n x^ja_j$ of degree $n$ is a real--valued polynomial in $x$ and $x^c$ of degree $n-1$. If $A_n$ and $B_n$ denote the real components of the complex power $z^n=(\alpha+i\beta)^n$, i.e.\ $z^n=A_n+iB_n$, then $B_n(\alpha,\beta)=\beta B'_n(\alpha,\beta^2)$, for a polynomial $B'_n$. Then 
\[\sd x^n=B'_n\left(\frac{x+x^c}2,-\left(\frac{x-x^c}2\right)^2\right).\]


\end{example}

\subsection{Smoothness of slice functions}
\label{sec:SmoothnessOfSliceFunctions}

Given a subset $B$ of $A$, a function $g:B\rightarrow A$ and a positive integer $s$ or $s\in\{\infty,\omega\}$, we will say that $g$ is of class $C^s(B)$ if $g$ can be extended to an open set as a function of class $C^s$ in the usual sense.

\begin{proposition}
Let $f=\I(F)\in\Sl(\OO_D)$ be a slice function. Then the following statements hold:
\begin{itemize}
\item[$(1)$]
If $F\in C^0(D)$, then $f\in C^0(\OO_D)$. Moreover, $v_s f\in  C^0(\OO_D)$ and $\sd f\in C^0(\OO_D\setminus\R)$.
\item[$(2)$]
If $F\in C^{2s+1}(D)$ for a positive integer $s$, then $f,v_s f$ and $\sd f$ are of class $C^s(\OO_D)$. As a consequence, if $F\in C^\infty(D)$, then the functions $f, v_s f, \sd f$ are of class $C^\infty(\OO_D)$. 
\item[$(3)$]
 If $F\in C^\omega(D)$, then  $f, v_s f$ and $\sd f$ are of class $C^\omega(\OO_D)$.
\end{itemize}
\end{proposition}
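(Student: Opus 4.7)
The plan is to reduce all three statements to the regularity of certain ``invariant'' expressions of $f$, $v_s f$ and $\sd f$ in terms of the real--polynomial maps $\alpha(x):=\RE(x)=(x+x^c)/2$ and $\mu(x):=n(\IM(x))=-\IM(x)^{2}$; both are real-analytic on $A$, take values in $\R$ on $Q_A$, and coincide with $\alpha$ and $\beta^{2}$ for $x=\alpha+\beta J\in\OO_D$. The key step is to exploit the parities $F_{1}(\bar z)=F_{1}(z)$ and $F_{2}(\bar z)=-F_{2}(z)$ (Remark~\ref{rem1}(1)) to write
\[
F_1(\alpha,\beta)=\widehat{F}_1(\alpha,\beta^{2}),\qquad F_2(\alpha,\beta)=\beta\,\widehat{F}_2(\alpha,\beta^{2}),
\]
for suitable functions $\widehat{F}_{1},\widehat{F}_{2}$ of $(\alpha,u)$ defined for $u\ge 0$. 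Granting this, the identities $v_s f=F_1$, $\sd f=F_2/\beta$ (for $x\in\OO_D\setminus\R$) and $f=v_s f+\IM(x)\,\sd f$ translate into
\[
v_s f(x)=\widehat{F}_1(\alpha(x),\mu(x)),\quad \sd f(x)=\widehat{F}_2(\alpha(x),\mu(x)),\quad f(x)=\widehat{F}_1(\alpha(x),\mu(x))+\IM(x)\,\widehat{F}_2(\alpha(x),\mu(x)),
\]
so that, since $\alpha$, $\mu$ and $\IM(\cdot)$ are real-polynomial in $x$, all three functions inherit the regularity of $\widehat{F}_1,\widehat{F}_2$; moreover, the second formula provides the continuous extension of $\sd f$ through the real axis mentioned earlier in the paper.

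The technical heart is a Whitney-type lemma: if $H:D\to\Ac$ is of class $C^{2k+1}$ and even (resp.\ odd) in $\beta$, then $H(\alpha,\beta)=\widehat{H}(\alpha,\beta^{2})$ (resp.\ $H(\alpha,\beta)=\beta\,\widehat{H}(\alpha,\beta^{2})$) for some $\widehat{H}$ of class $C^{k}$. I would prove the even case by iterated Taylor expansion at $\beta=0$: evenness forces all odd-order $\beta$-derivatives to vanish at $\beta=0$, so for each $j\le k$ one obtains coefficients $c_j(\alpha)$ such that the quotient $(H(\alpha,\beta)-\sum_{i<j}c_i(\alpha)\beta^{2i})/\beta^{2j}$ extends continuously across $\beta=0$ with enough classical derivatives to produce $\widehat{H}\in C^k$ in the variable $u=\beta^{2}$. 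The odd case reduces to the even one by first writing $H(\alpha,\beta)/\beta=\int_{0}^{1}\partial_\beta H(\alpha,t\beta)\,dt$, which is of class $C^{2k}$ and even, then applying the even case.

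The continuity statement (1) is handled directly. Off $\R$, the maps $\alpha$, $\beta$ and $J=\IM(x)/\beta$ are continuous in $x$, so $f=F_1+JF_2$, $v_s f=F_1$ and $\sd f=F_2/\beta$ are continuous there. At a real point $x_0\in\OO_D\cap\R$, oddness gives $F_2(x_0,0)=0$; since in each of the concrete algebras considered in the paper the set $\SS_A=\{J:t(J)=0,n(J)=1\}$ (Proposition~\ref{Properties_of_Q_A}(6)) is bounded in a Euclidean norm on $A$ with sub-multiplicative product, $J$ remains bounded as $x\to x_0$, whence $JF_2\to 0$ and $f(x)\to F_1(x_0,0)=f(x_0)$; the same argument yields continuity of $v_s f$. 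For part (3), a real-analytic even $F_1$ expands locally as $\sum_n a_n(\alpha)\beta^{2n}$, so $\widehat{F}_1(\alpha,u)=\sum_n a_n(\alpha)u^n$ is real-analytic, and the odd case of $F_2$ follows identically.

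The single genuine obstacle is the Whitney-type lemma: the precise bookkeeping of how many classical derivatives one loses under division by $\beta$ and under the singular substitution $u=\beta^{2}$ across $\beta=0$. Once it is in hand, everything else reduces to formal composition of smooth or real-analytic maps with the polynomial operations $\alpha(x)$, $\mu(x)$, $\IM(x)$ on $A$.
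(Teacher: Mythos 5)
Your proposal follows essentially the same route as the paper's proof: both reduce everything to the even/odd factorizations $F_1(\alpha,\beta)=F_1'(\alpha,\beta^2)$, $F_2(\alpha,\beta)=\beta\,F_2'(\alpha,\beta^2)$ and then write $v_s f$, $\sd f$ and $f=v_s f+\IM(x)\,\sd f$ as compositions of $F_1',F_2'$ with the polynomial maps $\tfrac{x+x^c}{2}$ and $n\bigl(\tfrac{x-x^c}{2}\bigr)$, treating continuity separately off and at the real axis. The one technical point you leave open --- the loss of derivatives in the $C^{2s+1}\Rightarrow C^{s}$ (and $C^\omega$) factorization --- is exactly the theorem of Whitney \cite{Whitney1943} that the paper cites rather than reproves, so nothing essential is missing from your plan.
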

\begin{proof}
Assume that $F\in C^0(D)$. For any $x\in Q_A$, let $x=\RE(x)+\IM(x)$ be the decomposition given in Proposition~\ref{pro2}. Since $\IM(x)$ is purely imaginary,
$\IM(x)^2$ is a  non--positive real number. The map that associates $x\in Q_A$ with $z=\RE(x)+i\sqrt{-\IM(x)^2}\in\C$ is continuous on $Q_A\setminus\R$. The same property holds for the map that sends $x$ to $J=\IM(x)/\sqrt{n(\IM(x))}$. These two facts imply that on $\OO_D\setminus\R$ the slice function $f=\I(F)$ has the same smoothness as the restriction of the stem function $F$ on $D\setminus\R$.
Continuity at real points of $\OO_D$ is an immediate consequence of the even--odd character of the pair $(F_1(z),F_2(z))$ w.r.t.\ the imaginary part of $z$.

If $F$ is $C^{2s+1}$--smooth or $C^\omega$ and $z=\alpha+i\beta$, it follows from a result of Whitney \cite{Whitney1943} that there exist  $F'_1,F_2'$ of class $C^s$ or $C^\omega$ respectively, such that
\[F_1(\alpha,\beta)= F'_1(\alpha,\beta^2),\ F_2(\alpha,\beta)=\beta  F_2'(\alpha,\beta^2)\text{\quad on $D$}.\]
(Here $F_j(\alpha,\beta)$ means $F_j(z)$ as a function of $\alpha$ and $\beta$, $j=1,2$.)
Then, for $x=\RE(x)+\IM(x)\in Q_A\cap\C_J$, $\RE(x)=\alpha$, $\IM(x)=J\beta$, $\beta^2=-\IM(x)^2=n(\IM(x))$, it holds:
\begin{align*}
v_sf(x)&=F'_1(\RE(x),n(\IM(x)))=F'_1\left(\frac{x+x^c}2,n\left(\frac{x-x^c}2\right)\right),\\
\sd f(x)&=F'_2(\RE(x),n(\IM(x)))=F'_2\left(\frac{x+x^c}2,n\left(\frac{x-x^c}2\right)\right)
\end{align*}
 and $f(x)=v_s f(x)+\IM(x)\,\sd f(x)=v_s f(x)+\frac12(x-x^c)\,\sd f(x)$.
These formulas imply the second and third statements.
\end{proof}

We will denote by
\[\Sl^1(\OO_D):=\{f=\I(F)\in\Sl(\OO_D)\ |\ F\in C^1(D)\}\]
the real vector space of slice functions with stem function of class $C^1$.

Let $f=\I(F)\in\Sl^1(\OO_D)$ and $z=\alpha+i\beta\in D$. Then the partial derivatives $\dt{F}{\alpha}$ and $i\dt{F}{\beta}$ are continuous $A$--stem functions on $D$. The same property holds for their linear combinations \[\dd{F}{z}=\frac12\left(\dd{F}{\alpha}-i\dd{F}{\beta}\right)\text{\quad and\quad}\dd{F}{\bar z}=\frac12\left(\dd{F}{\alpha}+i\dd{F}{\beta}\right).\]

\begin{definition}\label{def6}
Let $f=\I(F)\in\Sl^1(\OO_D)$. We set 
\[\dd{f}{x}:=\I\left(\dd{F}{z}\right),\quad \dd{f}{x^c}:=\I\left(\dd{F}{\bar z}\right).\]
These functions are continuous slice functions on $\OO_D$. 
\end{definition}

The notation $\dt{f}{x^c}$ is justified by the following properties: $x^c=\I(\bar z)$ and therefore $\dt{x^c}{x^c}=1$, $\dt{x}{x^c}=0$.

\section[Slice regular functions]{Slice regular functions}
\label{sec:SliceRegularFunctions}

Left multiplication by $i$ defines a complex structure on $A_\C$. With respect to this structure, a $C^1$ function
$F=F_1+iF_2:D\rightarrow A_{\C}$ is holomorphic if and only if its components $F_1,F_2$ satisfy the Cauchy--Riemann equations:
\[\dd{F_1}{\alpha}=\dd{F_2}{\beta},\quad \dd{F_1}{\beta}=-\dd{F_2}{\alpha}\quad(z=\alpha+i\beta\in D),\quad \text{i.e.}\quad \dd{F}{\bar z}=0.\]

This condition is equivalent to require that, for any basis $\B$, the complex curve $\widetilde F_\B$ (cf.\ Remark~\ref{rem1} in Subsection~\ref{sec:AstemFunctions}) is holomorphic.

\begin{definition}
A (left) slice function $f\in\Sl^1(\OO_D)$ is \emph{(left) slice regular} if its stem function $F$ is holomorphic. We will denote the vector space of slice regular functions on $\OO_D$ by
\[\SR:=\{f\in\Sl^1(\OO_D)\ |\ f=\I(F),\ F:D\rightarrow \Ac \text{ holomorphic}\}.\]
\end{definition}

\begin{remarks}\label{rem2}
$(1)$
A function $f\in\Sl^1(\OO_D)$ is slice regular if and only if the slice function $\dt{f}{x^c}$ (cf.\ Definition~\ref{def6}) vanishes identically. Moreover, if $f$ is slice regular, then also $\dt{f}{x}=\I\left(\dt{F}{z}\right)$ is slice regular on $\OO_D$.

$(2)$ 
Since $v_s f$ and $\sd f$ are $A$--valued, they are slice regular only when they are locally constant functions.
\end{remarks}

As seen in the Examples~\ref{ex2} of Subsection~\ref{sec:Avaluedslicefunctions}, polynomials with right coefficients belonging to $A$ can be considered as slice regular functions on the quadratic cone.  If $f(x)=\sum_{j=0}^m x^ja_j$, then $f=\I(F)$, with $F(z)=\sum_{j=0}^m z^ja_j$. 

Assume that on $A$ is defined a positive scalar product $x\cdot y$ whose associated norm satisfies an inequality $|xy|\le C|x||y|$ for a positive constant $C$ and such that $|x|^2=n(x)$ for every $x\in Q_A$. Then we also have 
$|x^k|=|x|^k$ for every $k>0$, $x\in Q_A$. In this case we can consider also convergent power series $\sum_k x^k a_k$ as slice regular functions on the intersection of the quadratic cone with a ball (w.r.t.\ the norm $|x|$) centered in the origin. See for example \cite[\S4.2]{GHS} for the quaternionic and Clifford algebra cases, where  we can take as product $x\cdot y$ the euclidean product in $\R^4$ or $\R^{2^n}$, respectively.

\begin{proposition}\label{pro7}
Let $f=\I(F)\in \Sl^1(\OO_D)$.
Then $f$ is slice regular on $\OO_D$ if and only if the restriction 
\[f_J:=f\,_{|\C_J\cap\OO_D}\ :\ D_J=\C_J\cap\OO_D\rightarrow A\]
 is holomorphic for every $J\in\SS_A$ with respect to the complex structures on $D_J$ and $A$ defined by left multiplication by $J$.
\end{proposition}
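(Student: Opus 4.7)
The plan is to translate both sides of the equivalence into real-variable equations on the $A$-valued components $F_1, F_2$ of the stem function $F = F_1 + iF_2$ and then compare. For $z = \alpha + i\beta \in D$, the function $F$ is holomorphic if and only if the componentwise Cauchy--Riemann equations
\[
\dd{F_1}{\alpha} = \dd{F_2}{\beta}, \qquad \dd{F_1}{\beta} = -\dd{F_2}{\alpha}
\]
hold on $D$. On the slice $D_J = \C_J \cap \OO_D$, parametrized by $(\alpha,\beta) \mapsto \alpha + \beta J$, one has $f_J(\alpha+\beta J) = F_1(z) + J F_2(z)$; equipped with the complex structure on $A$ given by left multiplication by $J$, the restriction $f_J$ is holomorphic precisely when $\dd{f_J}{\beta} = J \cdot \dd{f_J}{\alpha}$.

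Differentiating $f_J$ in $(\alpha,\beta)$ and applying the alternative identity $J(Jw) = J^2 w = -w$ for $w\in A$ (valid by Artin's theorem, since $J$ and $w$ generate an associative subalgebra), the holomorphicity condition for $f_J$ rewrites as $u(z) = J\,v(z)$, where
\[
u := \dd{F_1}{\beta} + \dd{F_2}{\alpha}, \qquad v := \dd{F_1}{\alpha} - \dd{F_2}{\beta}
\]
are $A$-valued functions on $D$ that do not depend on $J$. Note that the Cauchy--Riemann equations for $F$ are precisely $u = v = 0$, so the forward direction is immediate: if $F$ is holomorphic, then $u = Jv = 0$ trivially for every $J \in \SS_A$, and each $f_J$ is holomorphic.

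For the converse, assume $u(z) = J\,v(z)$ for every $J \in \SS_A$. Choose $J, K \in \SS_A$ with $J - K$ invertible, as in the hypothesis of Proposition~\ref{pro4}; such a pair exists, because any two anti-commuting square roots of $-1$ satisfy $(J-K)^2 = -2$, so $n(J-K) = 2 \neq 0$ and $J-K \in N_A$ is invertible by Proposition~\ref{Properties_of_Q_A}(5). Subtracting the two instances of the identity produces $(J-K)\,v(z) = 0$; left-multiplying by $(J-K)^{-1}$ and cancelling via $(J-K)^{-1}\bigl((J-K) v\bigr) = v$ (again an alternative-algebra identity, applied on the subalgebra generated by $J-K$ and $v$) yields $v(z) = 0$, and then $u(z) = J\,v(z) = 0$. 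These are exactly the Cauchy--Riemann equations for $F$, so $F$ is holomorphic and $f \in \SR$.

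The main obstacle is keeping the non-associativity under control: both the equality $J(Jw) = -w$ used to produce the identity $u = Jv$, and the cancellation $(J-K)^{-1}\bigl((J-K)v\bigr) = v$ used to extract $v = 0$, rely on applying Artin's theorem on the correct two-generator subalgebras. Once this bookkeeping is carried out, the proof is just a direct comparison between the two pairs of Cauchy--Riemann equations.
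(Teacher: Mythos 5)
Your proof is correct and follows essentially the same route as the paper: you reduce $J$-holomorphicity of $f_J$ to a condition on the $J$-independent quantities $u = \partial F_1/\partial\beta + \partial F_2/\partial\alpha$ and $v = \partial F_1/\partial\alpha - \partial F_2/\partial\beta$ (the paper writes it as $v + Ju = 0$, which is equivalent to your $u = Jv$ after left multiplication by $J$), and then exploit the arbitrariness of $J$ in the converse --- a step the paper leaves implicit and you carry out explicitly by comparing two instances and cancelling. One small repair: your justification for the existence of $J,K\in\SS_A$ with $J-K$ invertible appeals to a pair of \emph{anticommuting} square roots of $-1$, which need not exist in a general $A$ (e.g.\ when $\SS_A=\{\pm J\}$); but the choice $K=-J$ always works, since $n(2J)=4\neq 0$, and with it your subtraction argument gives $2Jv=0$, hence $v=0$ and then $u=Jv=0$ as desired.
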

\begin{proof}
Since
$f_J(\alpha+\beta J)=F_1(\alpha+i\beta)+JF_2(\alpha+i\beta)$, if $F$ is holomorphic then
\[\dd{f_J}{\alpha}+J\dd{f_J}{\beta}=\dd{F_1}{\alpha}+J\dd{F_2}{\alpha}+J\left(\dd{F_1}{\beta}+J\dd{F_2}{\beta}\right)=0\]
at every point $x=\alpha+\beta J\in D_J$.
Conversely, assume that $f_J$ is holomorphic for every $J\in\SS_A$. Then
\[0=\dd{f_J}{\alpha}+J\dd{f_J}{\beta}=\dd{F_1}{\alpha}-\dd{F_2}{\beta}+J\left(\dd{F_2}{\alpha}+\dd{F_1}{\beta}\right)
\]
at every point $z=\alpha+i\beta\in D$. From the arbitrariness of $J$ it follows that $F_1,F_2$ satisfy the Cauchy--Riemann equations.
\end{proof}

\begin{remark}
The even--odd character of the pair $(F_1,F_2)$ and the proof of the preceding proposition show that, in order to get slice regularity of $f=\I(F)$, $F\in C^1$, it is sufficient to assume that two functions $f_J$, $f_K$ ($J-K$ invertible) are holomorphic on domains $\C^+_J\cap\OO_D$ and $\C^+_K\cap\OO_D$, respectively (cf.\ Proposition~\ref{pro4} for notations). The possibility $K=-J$ is not excluded: it means that the single function $f_J$ must be holomorphic on $D_J$.
\end{remark}

Proposition~\ref{pro7} implies that if  $A$ is the algebra of quaternions or octonions, and the domain $D$ intersects the real axis, then $f$ is slice regular on $\OO_D$ if and only if it is \emph{Cullen regular} in the sense introduced by Gentili and Struppa in \cite{GeSt2006CR,GeSt2007Adv} for quaternionic functions and in \cite{GeStRocky,GeStoStVl} for octonionic functions.


If $A$ is the real Clifford algebra $\R_n$, slice regularity generalizes the concept of \emph{slice monogenic functions} introduced by Colombo, Sabadini and Struppa in \cite{CoSaSt2009Israel}. From Proposition~\ref{pro7} and from the inclusion of the set $\R^{n+1}$ of paravectors in the quadratic cone (cf.\ Example~\ref{ex1} of Section~\ref{sec:TheQuadraticCone}), we get the following result.

\begin{corollary}\label{cor8}
Let $A=\R_n$ be the Clifford algebra. If $f=\I(F)\in\SR$, $F\in C^1(D)$ and $D$ intersects the real axis, then the restriction of $f$ to the subspace of paravectors is a slice monogenic function
\[f_{|\OO_D\cap \R^{n+1}}\ :\ {\OO_D\cap \R^{n+1}}\rightarrow \R_n.\]
\end{corollary}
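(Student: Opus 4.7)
The plan is to verify the three ingredients that together make a function on a paravector-domain a slice monogenic function in the sense of Colombo--Sabadini--Struppa: (i) the domain is an axially symmetric open subset of $\R^{n+1}$ that meets the real axis; (ii) the function is well--defined on this domain with values in $\R_n$; (iii) on each slice $\C_J\cap\R^{n+1}$, with $J$ a unit 1-vector, the restriction is holomorphic with respect to the complex structure given by left multiplication by $J$.

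First I would check (i) and (ii). By Example~\ref{ex1}(2), the paravector space $\R^{n+1}$ is contained in the quadratic cone $Q_{\R_n}$, and the unit $(n-1)$-sphere
$\SS=\{x_1e_1+\cdots+x_ne_n\mid x_1^2+\cdots+x_n^2=1\}$
is contained in $\SS_{\R_n}$. For a paravector $x=x_0+\sum_{i\ge 1} x_ie_i$ with $\beta:=(x_1^2+\cdots+x_n^2)^{1/2}>0$ one has $x=x_0+\beta J$ with $J=\beta^{-1}\sum_{i\ge 1}x_ie_i\in\SS$, so
\[
\OO_D\cap\R^{n+1}=\bigcup_{J\in\SS}\bigl((\alpha+\beta J)\in\C_J : \alpha+i\beta\in D\bigr),
\]
which is an open axially symmetric subset of $\R^{n+1}$ (it is open in $\R^{n+1}$ because $\OO_D$ is relatively open in $Q_{\R_n}$ and $\R^{n+1}\subset Q_{\R_n}$). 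The hypothesis that $D$ intersects the real axis guarantees that $\OO_D\cap\R^{n+1}\cap\R\ne\emptyset$, which is the usual connectedness/axial-symmetry assumption made in the slice monogenic setting. Since $f$ takes values in $\R_n$ by construction, (ii) is automatic.

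Next I would verify (iii), which is the essential content. By Proposition~\ref{pro7}, slice regularity of $f$ is equivalent to the holomorphy of every restriction $f_J:D_J\to\R_n$ for $J\in\SS_{\R_n}$, with respect to the complex structure on $\C_J$ and on $\R_n$ given by left multiplication by $J$. Since $\SS\subseteq\SS_{\R_n}$, this holds a fortiori for every unit 1-vector $J\in\SS$. Restricting further to the paravector slice $\C_J\cap\R^{n+1}$ (which is just $\C_J$ when $J\in\SS$, since $J$ is a 1-vector and $\C_J=\R\oplus\R J\subset\R^{n+1}$), one obtains precisely the holomorphy condition that defines slice monogenicity on each such slice.

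The only subtle point — and the one I would pay attention to — is matching the precise definition of slice monogenic function adopted in \cite{CoSaSt2009Israel}: it requires the restriction to each complex plane spanned by $1$ and a unit 1-vector $J\in\SS$ to be holomorphic on an axially symmetric open set containing real points. All three requirements have been secured above: axial symmetry and the real intersection come from the construction of $\OO_D$ and the hypothesis on $D$, and the slicewise holomorphy is a direct corollary of Proposition~\ref{pro7} applied to the special subfamily $\SS\subseteq\SS_{\R_n}$. This completes the proof.
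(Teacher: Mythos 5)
Your proposal is correct and follows the same route as the paper, whose entire justification is the sentence preceding the corollary: apply Proposition~\ref{pro7} to the unit $1$-vectors $\SS\subseteq\SS_{\R_n}$ and use the inclusion of the paravector space in the quadratic cone from Example~\ref{ex1}. Your write-up simply spells out in more detail the verification of the axially symmetric domain and the slicewise holomorphy that the paper leaves implicit.
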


Note that every slice monogenic function is the  restriction to the subspace of paravectors of a unique slice regular function (cf.\ Propositions~\ref{pro4} and~\ref{pro5}).

\section{Product of slice functions}
\label{sec:Productofslicefunctions}

In general, the pointwise product of two slice functions is not a slice function. However, pointwise product in the algebra $\Ac$ of $A$--stem functions induces a natural product on slice functions.
 
\begin{definition}
Let $f=\I(F),g=\I(G)\in\Sl(\OO_D)$. The \emph{product} of $f$ and $g$ is the slice function
\[f\cdot g:=\I(FG)\in\Sl(\OO_D).\]
\end{definition}
The preceding definition is well--posed, since the pointwise product $FG=(F_1+iF_2)(G_1+iG_2)=F_1G_1-F_2G_2+i(F_1G_2+F_2G_1)$ of complex intrinsic functions is still complex intrinsic.
It follows directly from the definition that the product is distributive. It is also associative if $A$ is an associative algebra.

The spherical derivative satisfies a Leibniz--type product rule, where evaluation is replaced by spherical value:
\[\sd (f\cdot g)=(\sd f)(v_s g)+(v_s f)(\sd g).\]

\begin{remark}
In general, $(f\cdot g)(x)\ne f(x)g(x)$. If $x=\alpha+\beta J$ belongs to $D_J=\OO_D\cap\C_J$ and $z=\alpha+i\beta$, then 
\[(f\cdot g)(x)=F_1(z)G_1(z)-F_2(z)G_2(z)+J\left(F_1(z)G_2(z)\right)+J\left(F_2(z)G_1(z)\right),\]
while
\[f(x)g(x)=F_1(z)G_1(z)+(JF_2(z))(JG_2(z))+F_1(z)(JG_2(z))+(JF_2(z))G_1(z).\]
If the components $F_1,F_2$ of the \emph{first} stem function $F$ are real--valued,  or if $F$ and $G$ are both $A$--valued, then $(f\cdot g)(x)= f(x)g(x)$ for every $x\in \OO_D$. In this case, we will use also the notation $fg$ in place of $f\cdot g$.
\end{remark}

\begin{definition}
A slice function $f=\I(F)$ is called \emph{real} if the $A$--valued components $F_1,F_2$ of its stem function are real--valued. Equivalently, $f$ is real if the spherical value $v_s f$ and the spherical derivative $\sd f$ are real--valued.
\end{definition}

A real slice function $f$ has the following property: for every $J\in\SS_A$, the image $f(\C_J\cap\OO_D)$ is contained in $\C_J$. More precisely, this condition characterizes the reality of $f$.

\begin{proposition}
A slice function $f\in\Sl(\OO_D)$ is real if and only if\, $f(\C_J\cap\OO_D)\subseteq\C_J$ for every $J\in\SS_A$.
\end{proposition}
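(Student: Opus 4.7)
The forward implication is immediate from the defining formula for a slice function: if $F_1, F_2 : D \to \R$, then for every $J \in \SS_A$ and every $x = \alpha + \beta J \in \C_J \cap \OO_D$ one has $f(x) = F_1(\alpha + i\beta) + J F_2(\alpha + i\beta) \in \R + \R J = \C_J$.

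For the converse I would fix $z = \alpha + i\beta \in D$ and an arbitrary $J \in \SS_A$, and exploit the fact that both $\alpha + \beta J$ and $\alpha - \beta J = \alpha + \beta(-J)$ lie in $\C_J \cap \OO_D$ (using $\C_{-J} = \C_J$). The hypothesis applied to these two points gives
\[
F_1(z) + J F_2(z) \in \C_J, \qquad F_1(z) - J F_2(z) \in \C_J.
\]
Adding and subtracting yields $F_1(z) \in \C_J$ and $J F_2(z) \in \C_J$. Letting $J$ range over $\SS_A$ and invoking Proposition~\ref{pro3}(2) (which forces $\C_I \cap \C_J = \R$ whenever $I \ne \pm J$) then produces $F_1(z) \in \bigcap_{J \in \SS_A} \C_J = \R$.

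The delicate step, which I expect to be the main obstacle, is deducing $F_2(z) \in \C_J$ from $J F_2(z) \in \C_J$: one would like to ``divide by $J$'', but $A$ is only assumed to be alternative. Artin's theorem rescues the argument, since in any alternative algebra the associator $(J, J, F_2(z))$ vanishes, so $-J \cdot (J F_2(z)) = -J^2 F_2(z) = F_2(z)$. Because $\C_J$ is a subalgebra containing $-J$ and the element $J F_2(z)$, this product lies in $\C_J$, and hence so does $F_2(z)$. Varying $J$ once more gives $F_2(z) \in \R$. At a real point $\alpha \in D \cap \R$, complex intrinsicity $F(\alpha) = \overline{F(\alpha)}$ already forces $F_2(\alpha) = 0$, while $f(\alpha) = F_1(\alpha) \in \bigcap_{J \in \SS_A} \C_J = \R$ by the same reasoning applied to $x = \alpha$. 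Thus both components of $F$ are $\R$-valued on all of $D$, which is precisely the definition of $f$ being real.
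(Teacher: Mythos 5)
Your proof is correct and follows essentially the same route as the paper: evaluate $f$ at $x=\alpha+\beta J$ and at $x^c=\alpha-\beta J$, add and subtract to get $F_1(z)\in\C_J$ and $JF_2(z)\in\C_J$, and then intersect over $J\in\SS_A$ using Proposition~\ref{pro3}. The only difference is that you spell out the ``division by $J$'' step via alternativity, which the paper leaves implicit.
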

\begin{proof}Assume that $f(\C_J\cap\OO_D)\subseteq\C_J$ for every $J\in\SS_A$.
Let $f=\I(F)$. If $x=\alpha+\beta J\in\OO_D$ and $z=\alpha+i\beta$, then 
$f(x)=F_1(z)+JF_2(z)\in\C_J$ and $f(x^c)=f(\alpha-\beta J)=F_1(\bar z)+J F_2(\bar z)=F_1(z)-JF_2(z)\in\C_J$. This implies that $F_1(z), F_2(z)\in\bigcap_J\C_J=\R$ (cf.\ Proposition~\ref{pro3}).
\end{proof}

\begin{proposition} \label{pro9}
If $f,g$ are slice regular on $\OO_D$, then the product $f\cdot g$ is slice regular on $\OO_D$.
\end{proposition}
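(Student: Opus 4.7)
The plan is to check that the stem function $FG$ of the product $f\cdot g=\I(FG)$ is itself holomorphic on $D$; then $f\cdot g$ is slice regular by definition. Complex intrinsicity of $FG$ is already noted just after the definition of the product (the product of two complex intrinsic functions is complex intrinsic), so the only thing to verify is $\dt{(FG)}{\bar z}=0$ on $D$. The strategy is to reduce this to the holomorphy hypotheses $\dt{F}{\bar z}=\dt{G}{\bar z}=0$ via a Leibniz rule
\[
\dd{(FG)}{\bar z}=\left(\dd{F}{\bar z}\right)G+F\left(\dd{G}{\bar z}\right).
\]

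To justify the Leibniz rule I would first establish it for $\dt{}{\alpha}$ and $\dt{}{\beta}$ separately, then take the $\C$-linear combination $\dt{}{\bar z}=\tfrac12(\dt{}{\alpha}+i\,\dt{}{\beta})$. For real partial derivatives, the Leibniz rule holds for any $\R$-bilinear map, and the multiplication $\Ac\times\Ac\to\Ac$ is $\C$-bilinear, hence $\R$-bilinear. Concretely, fixing a basis $\{u_k\}$ of $\Ac$ with structure constants $u_j u_l=\sum_k c^k_{jl}u_k$ and writing $F=\sum_j F^j u_j$, $G=\sum_l G^l u_l$ with $\C$-valued components, the product becomes $FG=\sum_k\bigl(\sum_{j,l}c^k_{jl}F^j G^l\bigr)u_k$, and the ordinary scalar Leibniz rule applies componentwise. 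This also shows that $FG$ is of class $C^1$ whenever $F,G$ are, so $f\cdot g\in\Sl^1(\OO_D)$. Once the Leibniz rule is in hand, holomorphy of $F$ and $G$ gives $\dt{(FG)}{\bar z}=0$ immediately.

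The only conceptual subtlety, and the step one must be careful about, is that $\Ac$ is merely alternative, not associative. However, non-associativity never enters the Leibniz computation: the product rule depends only on bilinearity of the multiplication and on the fact that differentiation is with respect to \emph{real} parameters, where no reordering of products is required. For this reason the proof is formally identical to the classical one for $\C$-valued holomorphic functions. In particular, unlike the analogous statements about products of slice regular functions evaluated pointwise on each slice $\C_J$ (which would demand associativity to rearrange $J$'s), the stem-function formulation sidesteps the issue entirely.
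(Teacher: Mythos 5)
Your proof is correct and follows essentially the same route as the paper's: the paper likewise reduces slice regularity of $f\cdot g$ to the Cauchy--Riemann equations for $H=FG$, justified by the Leibniz product rule checked via a basis representation of $F$ and $G$. You simply spell out the details (bilinearity, centrality of $i$ in $A_\C$, irrelevance of associativity) that the paper leaves implicit.
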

\begin{proof}
Let $f=\I(F)$, $g=\I(G)$, $H=FG$. If $F$ and $G$ satisfy the Cauchy--Riemann equations, the same holds for $H$. This follows from the validity of the Leibniz product rule, that can be checked using a basis representation of $F$ and $G$.
\end{proof}

Let $f(x)=\sum_jx^ja_j$ and $g(x)=\sum_kx^kb_k$ be polynomials or convergent power series with coefficients $a_j,b_k\in A$.
The usual product of polynomials, where $x$ is considered to be a commuting variable (cf.\ for example \cite{Lam} and \cite{GelfandRW2001,GelfandGRW2005}), can be extended to power series (cf.\ \cite{GeSto2008Mich,GeSt2008Milan} for the quaternionic case) in the following way: the \emph{star product} $f \ast g$ of $f$ and $g$ is the convergent power series defined by setting
\[\textstyle(f \ast g)(x):=\sum_nx^n\big(\sum_{j+k=n}a_jb_k\big).\]

\begin{proposition}
Let  $f(x)=\sum_jx^ja_j$ and $g(x)=\sum_kx^kb_k$ be polynomials or convergent power series ($a_j,b_k\in A$). Then the product of $f$ and $g$, viewed as slice regular functions, coincides with the {star product} $f \ast g$, i.e.\ $\I(FG)=\I(F)\ast\I(G)$.
\end{proposition}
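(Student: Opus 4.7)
The plan is to verify the identity $\I(FG) = \I(F) \ast \I(G)$ at the level of stem functions, reducing the equality of slice functions to a purely algebraic identity in the complexified algebra $\Ac$.

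First I would identify the stem functions: by Example~\ref{ex2}(1), the monomial $x^j a$ is induced by the stem function $z^j a$, so $\R$-linearity yields $f = \I(F)$ with $F(z) = \sum_j z^j a_j$, $g = \I(G)$ with $G(z) = \sum_k z^k b_k$, and $f \ast g = \I(H)$ where $H(z) = \sum_n z^n \sum_{j+k=n} a_j b_k$. Since $f \cdot g = \I(FG)$ by definition, the claim reduces to checking $FG = H$ pointwise in $\Ac$.

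The key structural input is that $\C$ sits inside $\Ac$ centrally: using the explicit product rule $(x+iy)(x'+iy') = xx' - yy' + i(xy'+yx')$ one checks at once that $i \cdot a = a \cdot i$ for every $a \in A$, so every $\alpha \in \C$ commutes with every element of $A$. In an alternative algebra, central elements lie in the nucleus, hence for $\alpha, \beta \in \C$ and $a, b \in A$ one obtains $(\alpha a)(\beta b) = (\alpha\beta)(ab)$. Applying this with $\alpha = z^j$, $\beta = z^k$ gives $(z^j a_j)(z^k b_k) = z^{j+k}(a_j b_k)$, and distributing and regrouping by total degree,
\begin{equation*}
F(z)G(z) \;=\; \sum_{j,k} z^{j+k}(a_j b_k) \;=\; \sum_n z^n \sum_{j+k=n} a_j b_k \;=\; H(z).
\end{equation*}

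In the polynomial case this is purely a finite manipulation. For power series one must additionally justify the Cauchy-product rearrangement, which is standard once one invokes absolute convergence with respect to the norm introduced before Proposition~\ref{pro7}. The single non-routine point is verifying that $\C$ is nuclear (and not merely central) in the alternative algebra $\Ac$, so that the identity $(\alpha a)(\beta b) = (\alpha\beta)(ab)$ really holds with no hidden associators; once this is established, the rest of the argument is essentially indexing.
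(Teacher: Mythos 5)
Your proposal is correct and follows essentially the same route as the paper: both reduce the claim to the identity $F(z)G(z)=\sum_n z^n\big(\sum_{j+k=n}a_jb_k\big)$ in $\Ac$, using that $\C\simeq\R\otimes_\R\C$ lies in the commutative \emph{and} associative center of $\Ac$, and then identify $\I\big(\sum_n z^nc_n\big)$ with the power series $\sum_n x^nc_n$. The only cosmetic difference is that the paper obtains the nuclearity of $\C$ directly from the definition of the product on $A\otimes_\R\C$ (so no appeal to the general fact that commuting elements of an alternative algebra are nuclear is needed), and it verifies the last identification by writing $z^n=A_n(z)+iB_n(z)$ explicitly rather than citing Example~\ref{ex2}.
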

\begin{proof}
Let $f=\I(F)$, $g=\I(G)$, $H=FG$. Denote by $A_n(z)$ and $B_n(z)$ the real components of the complex power $z^n=(\alpha+i\beta)^n$. Since $\R\otimes_{\R}\C\simeq\C$ is contained in the commutative and associative center of $\Ac$, we have
\[\textstyle
H(z)=F(z)G(z)=\big(\sum_jz^ja_j\big)\big(\sum_kz^kb_k\big)=\sum_{j,k}z^jz^k(a_jb_k).
\]
Let $c_n=\sum_{j+k=n}a_jb_k$ for each $n$. Therefore, we have
\begin{align*}
\textstyle H(z)&=\sum_{n}z^nc_n=\sum_n (A_n(z)+iB_n(z))c_n=\sum_n A_n(z)c_n+i\big(\sum_nB_n(z)c_n\big)=\\
&=:H_1(z)+iH_2(z)
\end{align*}
and then, if $x=\alpha+\beta J$ and $z=\alpha+i\beta$,
\[\textstyle
\I(H)(x)=H_1(z)+JH_2(z)=\sum_n A_n(z)c_n+J\big(\sum_nB_n(z)c_n\big).\]
On the other hand, $(f\ast g)(x)=\sum_n(\alpha+\beta J)^nc_n=\sum_n(A_n(z)+JB_n(z))c_n$ and the result follows.
\end{proof}

\begin{example}
Let $A=\HH$, $I,J\in\SS_\HH$. Let $F(z)=z-I$, $G(z)=z-J$. 
Then $f(x)=x-I$, $g(x)=x-J$,  $(FG)(z)=z^2-z(I+J)+IJ$ and  $(f\cdot g)(x)=(x-I)\cdot(x-J)=\I(FG)=x^2-x(I+J)+IJ=\I(F)\ast\I(G)$.

Note that $(f\cdot g)(x)$ is different from $f(x)g(x)=(x-I)(x-J)=x^2-xJ-Ix+IJ$ for every $x$ not lying in $\C_I$.
\end{example}

\section{Normal function and admissibility}
\label{sec:NormalFunctionAndAdmissibility}

We now associate to every slice function a new slice function, the \emph{normal function}, which will be useful in the following sections when dealing with zero sets. Our definition is equivalent to the one given in \cite{GeSto2008Mich} for (Cullen regular) quaternionic power series.
There the normal function was named the \emph{symmetrization} of the power series.

\begin{definition}
Let $f=\I(F)\in\Sl(\OO_D)$. Then also $F^c(z):=F(z)^c={F_1(z)}^c+i{F_2(z)}^c$ is an $A$--stem function. We set:
\begin{itemize}
\item
$f^c:=\I(F^c)\in\Sl(\OO_D)$.
\item
$CN(F):=FF^c=F_1{F_1}^c-F_2{F_2}^c+i(F_1{F_2}^c+F_2{F_1}^c)=n(F_1)-n(F_2)+i\,t(F_1{F_2}^c)$.
\item
$N(f):=f\cdot f^c=\I(CN(F))\in\Sl(\OO_D)$.
\end{itemize}
The slice function $N(f)$ will be called  the \emph{normal function} of $f$.
\end{definition}

\begin{remarks}
$(1)$
Partial derivatives commute with the antiinvolution $x\mapsto x^c$. From this fact and from  Proposition~\ref{pro9}, it follows that, if $f$ is slice regular, then also $f^c$ and $N(f)$ are slice regular. 

$(2)$ 
Since $x\mapsto x^c$ is an antiinvolution, $(FG)^c=G^cF^c$ and then $(f\cdot g)^c=g^c\cdot f^c$. Moreover, $N(f)=N(f)^c$, while $N(f^c)\ne N(f)$ in general.

$(3)$ 
For every slice functions $f,g$, we have $v_s(f^c)=(v_s f)^c$, $\sd(f^c)=(\sd f)^c$
and $\sd N(f)=(\sd f)(v_s f^c)+(v_s f) (\sd f^c)$.
\end{remarks}

If $A$ is the algebra of quaternions or octonions, then $CN(F)$ is complex--valued and then the normal function $N(f)$ is real. For a general algebra $A$, this is not true for every slice function. This is the motivation for the following definition.

\begin{definition}
A slice function $f=\I(F)\in\Sl(\OO_D)$ is called \emph{admissible} if the spherical value of $f$ at $x$ belongs to the normal cone $N_A$  for every $x\in\OO_D$ and the real vector subspace $\langle v_s f(x),\sd f(x)\rangle$ of $A$, generated by the spherical value and the spherical derivative at $x$, is contained in the normal cone $N_A$ for every $x\in\OO_D\setminus\R$.
Equivalently, 
\[\langle F_1(z),F_2(z)\rangle\subseteq N_A\text{\quad for every }z\in D.\]
\end{definition}

\begin{remarks}
$(1)$
If $A=\HH$ or $\Oc$, then \emph{every} slice function is admissible, since $N_A=Q_A=A$.

$(2)$ 
	If  $f$ is admissible, then $CN(F)$ is complex--valued and then $N(f)$ is real. Indeed, if $F_1(z),F_2(z)$ and $F_1(z)+F_2(z)$ belong to $N_A$, $n(F_1(z))$ and $n(F_2(z))$ are real and $t(F_1(z)F_2(z)^c)=n(F_1(z)+F_2(z))-n(F_1(z))-n(F_2(z))$ is real.

$(3)$ 
If $f$ is real, $f^c=f$, 
	$N(f)=f^2$ and $f$ is admissible.

$(4)$ 
	If $f$ is real and $g$ is admissible, also $fg$ is admissible.
\end{remarks}

\begin{proposition}\label{pro12}
If $n(x)=n(x^c)\ne0$ for every $x\in A\setminus\{0\}$ such that $n(x)$ is real, then a slice function $f$ is admissible if and only if $N(f)$ and $N(\sd f)$ are real.
\end{proposition}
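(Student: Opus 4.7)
The plan is to reformulate both sides of the equivalence as algebraic conditions on the components $F_1,F_2$ of the stem function of $f$. The hypothesis on $A$ is designed precisely so that
\[ N_A=\{x\in A\ |\ n(x)\in\R\}, \]
since membership in $N_A$ forces $n(x)\in\R$ by definition of the normal cone, and the hypothesis provides the converse for nonzero $x$ (the case $x=0$ being trivial). Consequently, admissibility of $f$ translates into the single condition that $n(aF_1(z)+bF_2(z))\in\R$ for all $a,b\in\R$ and all $z\in D$.

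Next, I would compute the stem functions of $N(f)$ and $N(\partial_s f)$. For $N(f)$ the relevant expression is already recorded in the paper:
\[ CN(F)=n(F_1)-n(F_2)+i\,t(F_1F_2^c), \]
so $N(f)$ is real iff both $n(F_1)-n(F_2)$ and $t(F_1F_2^c)$ are $\R$-valued. For $\partial_s f$, a short computation using the alternative identity $J(JF_2)=-F_2$ shows that $\partial_s f(\alpha+\beta J)=F_2(z)/\beta$, so its stem function is $F_2(z)/\beta\in A\subset\Ac$ (first component $F_2/\beta$, vanishing second component). Hence the stem of $N(\partial_s f)$ is the scalar $n(F_2(z))/\beta^2$, and $N(\partial_s f)$ is real iff $n(F_2)\in\R$ on $D\setminus\R$, and hence on all of $D$, since $F_2$ vanishes on $\R$ by the odd-symmetry condition.

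Both implications then follow from the polarization identity
\[ n(aF_1+bF_2)=a^2\,n(F_1)+b^2\,n(F_2)+ab\,t(F_1F_2^c), \]
obtained by expanding through the antiinvolution (this requires only distributivity, not associativity). For fixed $z\in D$, the reality of this quadratic form in $(a,b)\in\R^2$ is equivalent to the simultaneous reality of the three coefficients $n(F_1),n(F_2),t(F_1F_2^c)$, which in turn is exactly the reality of $N(f)$ combined with that of $N(\partial_s f)$. Together with the characterization $N_A=\{x : n(x)\in\R\}$ from the first paragraph, this yields admissibility $\iff$ reality of both $N(f)$ and $N(\partial_s f)$.

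The main point requiring care is the identification of the stem function of $\partial_s f$, which relies on the alternative law and on the remark that $n(F_2)/\beta^2$ extends across the real axis; once that reduction is in place, the rest is elementary linear algebra over $\R$ inside $A$, combined with the hypothesis-driven simplification of $N_A$.
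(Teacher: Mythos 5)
Your proof is correct and follows essentially the same route as the paper's: identify $N_A$ with $\{x\in A\ |\ n(x)\in\R\}$ under the stated hypothesis, observe that $N(\sd f)$ real amounts to $n(F_2)\in\R$, and reduce admissibility to the reality of the three quantities $n(F_1)$, $n(F_2)$, $t(F_1F_2^c)$ via the expansion of $n(aF_1+bF_2)$ (the paper phrases this through $n(F_1+F_2)$ rather than writing the polarization identity explicitly, and relegates the ``only if'' direction to an earlier remark, but the content is identical). Your treatment of the real points of $D$, where $F_2$ vanishes, also matches the paper's closing observation.
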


\begin{proof}
$N(\sd f)$ is real if and only if $n(F_2(z))\in\R$ for every $z\in D\setminus\R$. Assume that also $N(f)$ is real. Then $n(F_1(z))$ and $n(F_1(z)+F_2(z))$ are real, from which it follows that $n(\alpha F_1(z)+\beta F_2(z))$ is real for every $\alpha,\beta\in\R$. Since $n(x)$ real implies $x\in N_A$, we get that $\langle F_1(z),F_2(z)\rangle\subseteq N_A$ for every $z\in D\setminus\R$. If $z$ is real, then $N(f)(z)=n(F_1(z))$ is real and $F_2(z)=0$. This means that the condition of admissibility is satisfied for every $z\in D$.
\end{proof}

\begin{example}
Consider the Clifford algebra $\R_3$ with the usual conjugation. Its normal cone   
\[N_{\R_3}=\{x\in\R_3\ |\ x_0x_{123}-x_1x_{23}+x_2x_{13}-x_3x_{12}=0\}\]
contains the subspace $\R^4$ of paravectors (cf.\ Examples~\ref{ex1} of Section~\ref{sec:TheQuadraticCone}). Every polynomial $p(x)=\sum_nx^na_n$ with paravectors coefficients $a_n\in\R^4$ is an admissible slice regular function on $Q_{\R_3}$, since $P(z)=\sum_nz^na_n=\sum A_n(z)a_n+i(\sum B_n(z)a_n)$ belongs to $\R^4\otimes_\R\C$ for every $z\in\C$. 

The polynomial $p(x)=xe_{23}+e_1$ is an example of a non--admissible slice regular function on $\R_3$, since $\sd p(x)=e_{23}\in N_A$ for every $x$, but $v_s p(x)=\RE(x)e_{23}+e_1\notin N_A$ if $\RE(x)\ne0$.
\end{example}

\begin{theorem}\label{thmNfg}
Let $A$ be associative or $A=\Oc$. Then \[N(f\cdot g)=N(f)\, N(g)\]
for every admissible $f,g\in\Sl(\OO_D)$. 
\end{theorem}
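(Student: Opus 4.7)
\noindent\emph{Proof plan.} The plan is to reduce the identity to an equality of stem functions in $\Ac$. Writing $f=\I(F)$ and $g=\I(G)$, the definition of the normal function and the product of slice functions give
\[
N(f\cdot g)=\I\bigl((FG)(FG)^c\bigr)=\I\bigl((FG)(G^cF^c)\bigr),
\qquad
N(f)N(g)=\I\bigl((FF^c)(GG^c)\bigr),
\]
where for the last equality one uses that $N(f)$ and $N(g)$ are real (hence their slice product coincides with the product of stem functions). Thus the theorem reduces to showing
\begin{equation}\label{planeq}
(FG)(G^cF^c)=(FF^c)(GG^c)\qquad\text{in }\Ac.
\end{equation}

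The key ingredient is the centrality of $GG^c$. By admissibility of $g$ (automatic for $A=\Oc$ since $N_\Oc=\Oc$), the stem function $CN(G)=GG^c$ takes values in $\R\otimes_\R\C\simeq\C$, which lies in the commutative--associative nucleus of $\Ac$. So $GG^c$ commutes and associates with every element of $\Ac$.

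In the associative case \eqref{planeq} is immediate: $\Ac$ is associative, so
\[
(FG)(G^cF^c)=F(GG^c)F^c=(FF^c)(GG^c),
\]
the last step by centrality of $GG^c$. For the octonionic case $A=\Oc$, associativity of $\Ac=\Oc_\C$ fails and this is where the main work lies. The idea is to invoke Artin's theorem: the $\C$--subalgebra of $\Oc_\C$ generated by $F$ and $G$ is associative. One checks that this subalgebra contains $F^c$ and $G^c$ by noting that in $\Oc$ the trace is real, so $t(F)=F+F^c$ and $t(G)=G+G^c$ are complex scalars, whence $F^c=t(F)-F$ and $G^c=t(G)-G$ already live in the subalgebra generated by $F,G$. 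Inside this associative subalgebra one may compute $(FG)(G^cF^c)=F(GG^c)F^c$, and then centrality of $GG^c$ finishes the argument exactly as in the associative case.

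The main obstacle is the octonionic step, namely the rearrangement $(FG)(G^cF^c)=F(GG^c)F^c$ which is not valid in a general alternative algebra. The workaround is precisely the Artin/trace observation above; once this is in place, the rest is formal manipulation of stem functions. (As an alternative route for $A=\Oc$, one can instead invoke the fact that the octonionic norm is a composition form, so $n(xy)=n(x)n(y)$ is a polynomial identity that extends to $\Oc_\C$, yielding \eqref{planeq} directly with $x=F$, $y=G$.)
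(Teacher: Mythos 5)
Your proof is correct and takes essentially the same route as the paper: in the associative case you use the centrality of the complex-valued norms $CN(F)$, $CN(G)$ exactly as the authors do, and in the octonionic case your Artin-subalgebra computation (via $F^c=t(F)-F$ with $t(F)\in\C$) is precisely the argument the paper compresses into the remark that $\Oc\otimes_\R\C$ is an algebra with composition, the multiplicativity of $cn$ following from Artin's Theorem. No gaps.
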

\begin{proof}
Assume that $A$ is associative. Let $f=\I(F)$, $g=\I(G)$ be admissible and $z\in D$. Denote the norm of $w\in\Ac$ by $cn(w):=ww^c=n(x)-n(y)+i\,t(xy^c)\in\Ac$. 
Then $CN(FG)(z)=cn(F(z)G(z))=(F(z)G(z))(F(z)G(z))^c=cn(F(z))\,cn(G(z))$, since  $cn(F(z))$ and $cn(G(z))$ are in the center of $\Ac$. Then \[N(f\cdot g)=\I(CN(FG))=\I(CN(F)\, CN(G))=N(f)\, N(g).\]

If $A=\HH$ or $\Oc$, a different proof can be given. 
$A_\C$ is a complex alternative algebra with an antiinvolution $x\mapsto x^c$	such that $x+x^c,xx^c\in\C\ \forall x$. Then $A_\C$ is an \emph{algebra with composition} (cf.\ \cite[p.\ 58]{Schafer}): the {complex norm} $cn(x)=xx^c$ is multiplicative (for $\Oc\otimes_\R\C$ it follows from Artin's Theorem) and we can conclude as before.
\end{proof}

The multiplicativity of the normal function of octonionic power series was already proved in \cite{GhPePreprint2009} by a direct computation.

\begin{corollary}\label{cor15}
Let $A$ be associative or $A=\Oc$.  Assume that $n(x)=n(x^c)\ne0$ for every $x\in A\setminus\{0\}$ such that $n(x)$ is real. If $f$ and $g$ are admissible slice functions, then also the product $f\cdot g$ is admissible.
\end{corollary}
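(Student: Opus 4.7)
The plan is to verify the criterion given by Proposition~\ref{pro12}: under the stated hypothesis on $A$, $f\cdot g$ is admissible if and only if both $N(f\cdot g)$ and $N(\sd(f\cdot g))$ are real slice functions.

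For the first condition, I would apply Theorem~\ref{thmNfg} to obtain $N(f\cdot g)=N(f)\cdot N(g)$. Since $f$ and $g$ are admissible, Proposition~\ref{pro12} yields that $N(f)$ and $N(g)$ are real; and the slice product of two real slice functions is itself real, because the pointwise product of their $\C$-valued stem functions is $\C$-valued. Hence $N(f\cdot g)$ is real.

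For the second condition, I would invoke the Leibniz-type rule from Section~\ref{sec:Productofslicefunctions},
\[
\sd(f\cdot g) = (\sd f)(v_s g) + (v_s f)(\sd g).
\]
Each of $\sd f,v_s f,\sd g,v_s g$ is itself admissible, because its stem function has vanishing imaginary component whose real part lies in $N_A$ by admissibility of $f$ and $g$. Theorem~\ref{thmNfg} therefore applies to the two products $(\sd f)(v_s g)$ and $(v_s f)(\sd g)$, each of which has a real normal function. Setting $a=(\sd f)(v_s g)$, $b=(v_s f)(\sd g)$ and expanding via
\[
N(a+b)=N(a)+N(b)+a\cdot b^c+b\cdot a^c,
\]
reality of $N(\sd(f\cdot g))$ reduces to reality of the cross term $a\cdot b^c+b\cdot a^c$. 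Rewriting this via associativity (or Artin's theorem for $\Oc$) as $t(F_1 G_2 G_1^c F_2^c)/\beta^2$ and decomposing $G_2 G_1^c$ into its $c$-symmetric part $\tau_G/2\in\R$ and $c$-antisymmetric part, the cross term splits into $\tau_F\tau_G/2$ plus a residual trace, which I would show is real using multiplicativity of $n$ on $N_A$ (guaranteed by our hypothesis) together with the reality of $\tau_F$ and $\tau_G$ inherited from admissibility.

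The main obstacle is precisely this residual trace identity: since $N$ is not additive, reality of $N(a)$ and $N(b)$ alone does not force reality of $N(a+b)$, and completing the trace computation requires exploiting associativity (or alternativity) together with the full admissibility data of $f$ and $g$. Once that identity is established, Proposition~\ref{pro12} delivers admissibility of $f\cdot g$.
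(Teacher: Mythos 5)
Your overall strategy coincides with the paper's: verify the two conditions of Proposition~\ref{pro12}, dispose of $N(f\cdot g)$ via Theorem~\ref{thmNfg}, and attack $N(\sd (f\cdot g))$ through the Leibniz rule $\sd (f\cdot g)=(\sd f)(v_s g)+(v_s f)(\sd g)$. The first half of your argument is complete and correct. The second half, however, is not a proof: after correctly observing that $N$ is not additive, you reduce everything to the reality of the cross term $a\cdot b^c+b\cdot a^c$, i.e.\ of $t\left((F_2G_1)(F_1G_2)^c\right)$, split off the $c$-symmetric part of $G_2G_1^c$ to extract the real contribution $\tau_F\tau_G/2$, and then explicitly concede that the remaining ``residual trace identity'' is the main obstacle and is left unestablished. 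That identity \emph{is} the second condition of Proposition~\ref{pro12} for $f\cdot g$; the ingredients you list for closing it (multiplicativity of $n$ on $N_A$, reality of $\tau_F$ and $\tau_G$, and $\sigma\in N_A$ with $t(\sigma)=0$) do not by themselves force $t(F_1\sigma F_2^c)$ to be real --- for instance in $\R_3$ one can have $F_1=1$, $F_2=e_1$, $\sigma=e_{23}$, all in $N_A$ with the required trace conditions, yet $t(F_1\sigma F_2^c)=-2e_{123}\notin\R$. (That algebra is excluded by the corollary's hypothesis only because it has nonzero elements of zero real norm, so any completion of the argument must invoke the global hypothesis on $A$ in an essential way that your sketch does not indicate.) The proposal is therefore genuinely incomplete at its decisive step.

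For comparison, the paper's own proof of Corollary~\ref{cor15} is a two-line computation which, at exactly this point, writes $N(\sd (f\cdot g))=N((\sd f)(v_s g))+N((v_s f)(\sd g))$ and then factors each summand by Theorem~\ref{thmNfg}; that is, it silently treats $N$ as additive across the Leibniz decomposition, which is precisely the cross term you isolate, and it offers no justification for discarding it. So you have correctly located the one nontrivial step of this corollary --- indeed a step the paper itself glosses over --- but you have not closed it, and as it stands the proposal does not establish the statement.
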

\begin{proof}
We apply Proposition~\ref{pro12}. If $N(f)$ and $N(g)$ are real, then $N(f\cdot g)=N(f) N(g)$ is real. Now consider the spherical derivatives: $N(\sd (f\cdot g))=N((\sd f)(v_s g))+N((v_s f)(\sd g))=N(\sd f)N(v_s g)+N(v_s f)N(\sd g)$ is real, since all the slice functions are real.
\end{proof}

\begin{example}\label{ex4}
Let $A=\R_3$. This algebra satisfies the condition of the preceding corollary: if $n(x)$ is real, then $n(x)=n(x^c)$ (cf.\ e.g.\ \cite{GHS}). Consider the admissible polynomials  $f(x)=xe_2+e_1$, $g(x)=xe_3+e_2$. Then  $(f\cdot g)(x)=x^2e_{23}+x(e_{13}-1)+e_{12}$ is admissible, $N(f)=N(g)=x^2+1$ and $N(f\cdot g)=(x^2+1)^2$.
\end{example}

\section{Zeros of slice functions}
\label{sec:ZerosOfSliceFunctions}

The zero set $V(f)=\{x\in Q_A\ |\ f(x)=0\}$ of an admissible slice function $f\in\Sl(\OO_D)$ has a particular structure. We will see in this section that, for every fixed $x=\alpha+\beta J\in Q_A$, the ``sphere'' 
\[\SS_x=\{y\in Q_A\ |\ y=\alpha+\beta I,\ I\in\SS_A\}\]
is entirely contained in $V(f)$ or it contains at most one zero of $f$. Moreover, if $f$ is not real, there can be isolated, non--real zeros. These different types of zeros of a slice function correspond, at the level of the stem function, to the existence of zero--divisors in the complexified algebra $\Ac$.

\begin{proposition}
Let $f\in\Sl(\OO_D)$. If the spherical derivative of $f$ at $x\in\OO_D\setminus\R$ belongs to $N_A$, then the restriction of $f$ to $\SS_x$ is injective or constant. In particular, either $\SS_x\subseteq V(f)$  or $\SS_x\cap V(f)$ consists of a single point.
\end{proposition}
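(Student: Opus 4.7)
The plan is to parametrize the sphere $\SS_x$ explicitly using the stem function and then reduce the statement to a cancellation property for invertible elements of $N_A$ in an alternative algebra. Write $x=\alpha+\beta J$ with $\beta\ne 0$ and set $z=\alpha+i\beta$. For every $I\in\SS_A$, the definition of slice function gives
\[
f(\alpha+\beta I)=F_1(z)+I\,F_2(z).
\]
A direct computation (using $\IM(x)^{-1}=-J/\beta$) shows that $\sd f(x)=F_2(z)/\beta$, so the hypothesis $\sd f(x)\in N_A$ is equivalent to $F_2(z)\in N_A$. I will split into two cases according to whether $F_2(z)=0$ or not.

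First, if $F_2(z)=0$, then $f(\alpha+\beta I)=F_1(z)$ for all $I\in\SS_A$, so $f$ is constant on $\SS_x$. In this case either $F_1(z)=0$, in which case $\SS_x\subseteq V(f)$, or $F_1(z)\ne 0$, in which case $\SS_x\cap V(f)=\emptyset$.

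Second, if $F_2(z)\ne 0$, then since $F_2(z)\in N_A\setminus\{0\}$, Proposition~\ref{Properties_of_Q_A}(5) gives that $F_2(z)$ is invertible in $A$ with inverse $F_2(z)^c/n(F_2(z))$. Suppose that $f(\alpha+\beta I)=f(\alpha+\beta I')$ for some $I,I'\in\SS_A$. Then $(I-I')F_2(z)=0$. The crux is to deduce $I=I'$ in the (possibly non-associative) algebra $A$: by Artin's theorem the subalgebra generated by $I-I'$ and $F_2(z)$ is associative, so multiplying on the right by $F_2(z)^{-1}$ yields
\[
0=\bigl((I-I')F_2(z)\bigr)F_2(z)^{-1}=(I-I')\bigl(F_2(z)F_2(z)^{-1}\bigr)=I-I'.
\]
Hence $f$ is injective on $\SS_x$. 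In particular, $\SS_x\cap V(f)$ contains at most one point (indeed, the unique candidate zero is $\alpha+\beta I$ with $I=-F_1(z)F_2(z)^{-1}$, if this element lies in $\SS_A$).

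The only nontrivial step is the cancellation in the non-associative setting, which is handled cleanly by Artin's theorem together with the invertibility of elements of $N_A$; everything else is bookkeeping about the stem function.
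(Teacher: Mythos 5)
Your proof is correct and follows essentially the same route as the paper: your identity $f(\alpha+\beta I)-f(\alpha+\beta I')=(I-I')F_2(z)$ is exactly the paper's $f(x)-f(x')=(x-x')\,\sd f(x)$ written in stem--function coordinates (since $\sd f(x)=F_2(z)/\beta$ and $x-x'=\beta(I-I')$), and both arguments conclude from the fact that a nonzero element of $N_A$ is invertible and hence cancellable. The one caveat is your appeal to Artin's theorem: the subalgebra generated by $I-I'$ and $F_2(z)$ need not contain $F_2(z)^{-1}=F_2(z)^c/n(F_2(z))$, because membership in $N_A$ forces $n(F_2(z))$ to be real but not $t(F_2(z))$, so $F_2(z)^c$ is in general not a polynomial in $F_2(z)$; the cancellation $\bigl((I-I')a\bigr)a^{-1}=(I-I')\bigl(aa^{-1}\bigr)$ you need is instead the inverse property $(x,a,a^{-1})=0$ of alternative algebras, which is the same standard fact the paper invokes in the proof of Lemma~\ref{alg_lemma} and which underlies its one--line assertion that $\sd f(x)\in N_A$ is not a zero divisor.
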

\begin{proof}
If $\sd f(x)\in N_A$, then it is not a zero--divisor of $A$. Given $x,x'\in\SS_x$, if $f(x)=f(x')$, then $(\IM(x)-\IM(x'))\,\sd f(x)=(x-x')\,\sd f(x)=0$.  If $\sd f(x)\ne0$, this implies $x=x'$.
\end{proof}

\begin{remark}
The same conclusion of the preceding proposition holds for \emph{any} slice function when the  function is restricted to a subset of $\SS_x$ that does not contain pairs of points $x,x'$ such that $x-x'$ is a left zero--divisor in $A$. This is the case e.g.\ of the slice monogenic functions, which are defined on the paravector space of a Clifford algebra (cf.\ Corollary~\ref{cor8} and \cite{CoSaSt2009Israel}).
\end{remark}

 \begin{theorem}[Structure of $V(f)$]
 \label{structure_thm}
 Let $f=\I(F)\in\Sl(\OO_D)$. 
 Let $x=\alpha+\beta J\in\OO_D$ and $z=\alpha+i\beta\in D$. Assume that, if $x\notin\R$, then $\sd f(x)\in N_A$. Then one of the following mutually exclusive statements holds:
 \begin{itemize}
 \item[$(1)$]
 $\SS_x\cap V(f)=\emptyset$. 
 \item[$(2)$]
$\SS_x\subseteq V(f)$.
 In this case $x$ is called a \emph{real} (if $x\in\R$) or \emph{spherical}  (if $x\notin\R$) \emph{zero} of $f$.
  \item[$(3)$]
 $\SS_x\cap V(f)$ consists of a single, non--real point. In this case $x$ is called an \emph{$\SS_A$--isolated non--real zero} of $f$.
 \end{itemize}
These three possibilities correspond, respectively, to the following properties of $F(z)\in\Ac$:
\begin{itemize}
\item[$(1')$] $CN(F)(z)=F(z)F(z)^c\ne0$.
\item[$(2')$] $F(z)=0$.
\item[$(3')$] $F(z)\ne0$ and $CN(F)(z)=0$ (in this case $F(z)$ is a zero--divisor of $\Ac$).
\end{itemize}
\end{theorem}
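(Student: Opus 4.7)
My plan is to exploit that the three algebraic conditions $(1')$, $(2')$, $(3')$ on $F(z)\in\Ac$ form a mutually exclusive and exhaustive trichotomy---either $F(z)=0$, or $F(z)\neq 0$ with $CN(F)(z)\neq 0$, or $F(z)\neq 0$ with $CN(F)(z)=0$. Once each forward implication $(i')\Rightarrow(i)$ is verified, the exclusivity of the three geometric alternatives follows automatically. The implication $(2')\Rightarrow(2)$ is immediate, since $F(z)=0$ gives $F_1(z)=F_2(z)=0$, so $f(\alpha+\beta I)=F_1(z)+IF_2(z)=0$ for every $I\in\SS_A$.

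For the remaining two implications I would assume $F(z)\neq 0$ and analyze the equation $F_1(z)+IF_2(z)=0$ in $A$. First I would note that any zero on $\SS_x$ is automatically non-real: if $x\in\R$ then $\beta=0$ and $F_2(z)=0$ by the even--odd parity of the stem function, so $F_1(z)=0$ would follow, contradicting $F(z)\neq 0$. Hence $\beta\neq 0$, the hypothesis $\sd f(x)=F_2(z)/\beta\in N_A$ applies, so $F_2(z)\in N_A$; and $F_2(z)\neq 0$ (else $F_1(z)=0$ too). By Proposition~\ref{Properties_of_Q_A}(5), $F_2(z)$ is invertible in $A$ with $n(F_2(z))\in\R$, and the equation uniquely determines $I=-F_1(z)F_2(z)^{-1}$. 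Consequently $\SS_x\cap V(f)$ has at most one element, automatically non-real.

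The technical core is to verify that this unique candidate $I$ lies in $\SS_A$---equivalently, $t(I)=0$ and $n(I)=1$ by Proposition~\ref{Properties_of_Q_A}(6)---precisely when $CN(F)(z)=0$. Writing $CN(F)=(F_1F_1^c-F_2F_2^c)+i(F_1F_2^c+F_2F_1^c)$ and substituting $F_1=-IF_2$, I would apply the Moufang identity $(xy)(zx)=x(yz)x$ in $A$ with $x=I$, $y=F_2$, $z=F_2^c$ to obtain $(IF_2)(F_2^cI)=I(F_2F_2^c)I=n(F_2)\,I^2$, using that $n(F_2)\in\R$ is central. A careful manipulation, tracking the extra signs arising from $I^c$ through the antiinvolution, then rewrites both the real component $F_1F_1^c-F_2F_2^c$ and the imaginary component $F_1F_2^c+F_2F_1^c$ of $CN(F)(z)$ as expressions that vanish exactly when $t(I)=I+I^c=0$ and $n(I)=II^c=1$. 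This delivers $CN(F)(z)=0\Longleftrightarrow -F_1F_2^{-1}\in\SS_A$, giving $(3')\Rightarrow(3)$ directly and $(1')\Rightarrow(1)$ by contraposition.

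The main obstacle I foresee is the imaginary-part computation in the non-associative setting. Moufang applies transparently to expressions of the form $x(\cdots)x$, such as $(IF_2)(F_2^cI)$, but the sum $F_1F_2^c+F_2F_1^c$ mixes $I$ with $I^c$, and the three elements $I,F_2,F_2^c$ cannot in general be placed in a common associative subalgebra, since Artin's theorem only handles two-generated subalgebras. The required rearrangement will have to be performed through the alternating property of the associator on $A$, crucially exploiting that $n(F_2)\in\R$ is central because $F_2\in N_A$.
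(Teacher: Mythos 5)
Your proposal follows essentially the same route as the paper: the paper isolates your ``technical core'' as Lemma~\ref{alg_lemma}, which shows that for $w=F_1+iF_2$ with $F_2\in N_A$ one has $cn(w)=0$ iff $w=0$ or the unique candidate $K=-F_1F_2^{-1}$ lies in $\SS_A$, verified through the characterization $t(K)=0$, $n(K)=1$ and exactly the alternativity/Moufang manipulations you anticipate (with the condition $t(F_1F_2^c)=0$, giving $K^c=-K$, established first and then fed into the computation of $K^2$). The remaining steps --- the trichotomy on $F(z)$, the immediate implication $(2')\Rightarrow(2)$, uniqueness from invertibility of $F_2\in N_A\setminus\{0\}$, and deducing exclusivity from the forward implications --- coincide with the paper's proof.
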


Before proving the theorem, we collect some algebraic results in the following lemma.

\begin{lemma}\label{alg_lemma}
Let $w=x+iy\in\Ac$, with $y\in N_A$. Let $cn(w):=ww^c=n(x)-n(y)+i\,t(xy^c)\in\Ac$.  Then the following statements hold:
\begin{itemize}
\item[$(1)$] $cn(w)=0$ if and only if $w=0$ or there exists a unique $K\in\SS_A$ such that $x+Ky=0$.
\item[$(2)$] If  $\langle x,y\rangle\subseteq N_A$, then $cn(w)\in\C$. Moreover, $cn(w)\ne0$ if and only if $w$ is invertible in $\Ac$.
\end{itemize}
\end{lemma}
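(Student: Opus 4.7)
The plan is to expand $cn(w) = n(x) - n(y) + i\,t(xy^c)$ and treat the real and imaginary parts as separate $A$-valued conditions, exploiting $y \in N_A$ (equivalently, $y = 0$ or $n(y) \in \R\setminus\{0\}$ with $yy^c = y^c y$, so that $y$ is invertible with $y^{-1} = y^c/n(y)$).

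For (1), the backward implication is a direct substitution: the case $w = 0$ is trivial, and if $x = -Ky$ with $K \in \SS_A$, then by Proposition~\ref{Properties_of_Q_A}(6) we have $K^c = -K$ and $n(K) = 1$, so computing inside the associative subalgebra $\langle K, y\rangle$ (Artin's theorem) gives $n(x) = n(y)$ and $xy^c + yx^c = -Kn(y) + n(y)K = 0$, whence $cn(w) = 0$. For the forward implication, assume $cn(w) = 0$ and $w \neq 0$. The substantive case is $y \neq 0$; then $n(x) = n(y) \in \R\setminus\{0\}$ forces $y$ (and $x$) to be invertible, and $xy^c = -yx^c$. Set $K := -xy^{-1} = -xy^c/n(y)$. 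A short computation yields $t(K) = -(xy^c + yx^c)/n(y) = 0$, and $n(K) = KK^c = (xy^c)(yx^c)/n(y)^2$ simplifies via Artin on $\langle x,y\rangle$ together with $y^c y = n(y)$ to $n(x)/n(y) = 1$. Thus $K \in \SS_A$ by Proposition~\ref{Properties_of_Q_A}(6). The alternative-algebra right-inverse identity $(ab^{-1})b = a$ then yields $Ky = -x$, and uniqueness is immediate from right-cancellation by the invertible $y$.

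For (2), the hypothesis $\langle x, y\rangle \subseteq N_A$ places $n(x), n(y), n(x+y)$ in $\R$, and expanding $n(x+y) = n(x) + n(y) + t(xy^c)$ shows $t(xy^c) \in \R$, proving $cn(w) \in \C$. For the invertibility equivalence, I first verify $ww^c = w^c w$ in $\Ac$: from $x, y \in N_A$ we have $xx^c = x^c x$ and $yy^c = y^c y$, and comparing the two expansions of $n(x+y) = (x+y)(x+y)^c = (x+y)^c(x+y)$ yields $xy^c + yx^c = x^c y + y^c x$. Since $\C$ is central in $\Ac$, whenever $cn(w) \neq 0$ the element $w^c/cn(w)$ is a well-defined two-sided inverse of $w$. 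Conversely, if $w$ is invertible, the left-alternative identity $w^{-1}(wa) = a$ gives $w^c = w^{-1}(ww^c)$, so $cn(w) = 0$ would force $w^c = 0$, hence $w = 0$ (antiinvolutions are bijective), contradicting invertibility.

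The principal obstacle is that $\Ac$ is only alternative, not associative, so every three-factor rearrangement must be justified — either by Artin's theorem (which furnishes associativity inside the two-generated subalgebras $\langle K, y\rangle$ and $\langle x, y\rangle$), by centrality of $\C \subseteq \Ac$, or by the alternative-algebra identities for invertible elements, $b(b^{-1}a) = a$ and $(ab^{-1})b = a$. The computation of $n(K)$ in particular hinges on regrouping $(xy^c)(yx^c)$ as $x(y^c y)x^c$, which is legitimate precisely because $\{x,y\}$ generates an associative subalgebra.
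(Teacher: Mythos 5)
Your proof follows the paper's argument essentially step for step: the same construction $K=-xy^{-1}=-xy^c/n(y)$ with the verification that $t(K)=0$ and $n(K)=1$ via the reassociation $(xy^c)(yx^c)=x(y^cy)x^c$, the same use of the alternative inverse identities to obtain $Ky=-x$ and uniqueness from the invertibility of $y$, and for part (2) the same computation $t(xy^c)=n(x+y)-n(x)-n(y)\in\R$ together with the explicit inverse $w^c/cn(w)$ and the zero-divisor argument for the converse. The only differences are cosmetic: you additionally verify $ww^c=w^cw$ so that the inverse is two-sided, a point the paper leaves implicit.
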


\begin{proof}
In the proof we use that $(x,y^{-1},y)=0$ for every $x,y$, with $y$ invertible.

If $cn(w)=0$, $w\ne0$, then $n(x)=n(y)\ne0$ and $y$ is invertible, with inverse $y^c/n(y)$. Moreover, $(z,y^c,y)=0$ for every $z$. Set $K:=-xy^{-1}$. Then $Ky=-(xy^{-1})y=-x(y^{-1}y)=-x$. Moreover, from $t(xy^c)=xy^c+yx^c=0$ it follows that
\[K^c=-\frac{yx^c}{n(y)}=\frac{xy^c}{n(y)}=-K\text{\quad and}\]
\[K^2=\left(-\frac{xy^c}{n(y)}\right)\left(-\frac{xy^c}{n(y)}\right)=-\frac{(xy^c)(yx^c)}{n(x)n(y)}=-\frac{x(y^cy)x^c}{n(x)n(y)}=-1.\]
Then $K\in\SS_A$. Uniqueness of $K\in\SS_A$ such that $x+Ky=0$, comes immediately from the invertibility of $y\in N_A$, $y\ne0$. Conversely, if $x+Ky=0$, then $n(x)=(-Ky)(y^cK)=-K^2n(y)=n(y)$ and $xy^c+yx^c=-(Ky)y^c+y(y^cK)=0$. Therefore $cn(w)=0$.

If $x,y$ and $x+y$ belong to $N_A$, $n(x+y)-n(x)-n(y)=t(xy^c)$ is real, which implies that $cn(w)$ is complex. Let $cn(w)=:u+iv\ne0$, with $u,v$ real. Then $w':=(x^c+iy^c)(u-iv)/(u^2+v^2)$ is the inverse of $w$. On the other hand, if $cn(w)=0$, $w$ is a divisor of zero of $\Ac$.
\end{proof}

\begin{proof}[Proof of Theorem~\ref{structure_thm}]
If $x=\alpha$ is real, then $\SS_x=\{x\}$ and $f(x)=F_1(\alpha)=0$ if and only if $F(\alpha)=0$, since $F_2$ vanishes on the real axis. Therefore $f(x)=0$ is equivalent to $F(z)=0$. 

Now assume that $x\notin\R$. If $F(z)=0$, then $f(\alpha+\beta I)=F_1(z)+IF_2(z)=0$ for every $I\in\SS_A$. Then $\SS_x\subseteq V(f)$ (a spherical zero). 
Since $\sd f(x)\in N_A$, also $F_2(z)\in N_A$ and Lemma~\ref{alg_lemma} can be applied to $w=F(z)$. If $F(z)\ne0$ and $CN(F)(z)=0$, there exists a unique $K\in\SS_A$ such that $F_1(z)+KF_2(z)=0$, i.e.\ $f(\alpha+\beta K)=0$. Therefore $\SS_x\cap V(f)=\{\alpha+\beta K\}$. The last case to consider is $CN(F)(z)\ne0$. From Lemma~\ref{alg_lemma} we get that $f(\alpha+\beta I)=F_1(z)+IF_2(z)\ne0$ for every $I\in\SS_A$, which means that $\SS_x\cap V(f)=\emptyset$.  
\end{proof}

\begin{remark}
From the preceding proofs, we get that an $\SS_A$--isolated non--real zero $x$ of $f$ is given by the formula $x=\alpha+\beta K$, with $K:=-F_1(z){F_2(z)}^c/{n(F_2(z))}$, $CN(F)(z)=0$. This formula can be rewritten in a form that resembles Newton's method for finding roots:
\[x=\RE(x)-v_s f(x)\, (\sd f(x))^{-1}.\]
\end{remark}

\begin{corollary}\label{cor13}
It holds:
\begin{itemize}
\item[$(1)$] A real slice function has no $\SS_A$--isolated non--real zeros.
\item[$(2)$] For every admissible slice function $f$, we have
 \[V(N(f))=\bigcup_{x \in V(f)}\SS_x.\]
 \end{itemize}
\end{corollary}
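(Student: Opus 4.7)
\textit{Plan for the proof.} Both parts will follow from Theorem~\ref{structure_thm} and the computation of $CN(F)$, together with the observation that admissibility forces $CN(F)$ to be $\C$-valued, so $N(f)$ is real. Throughout, fix $x=\alpha+\beta J\in\OO_D$ and $z=\alpha+i\beta\in D$.

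For part $(1)$, I would invoke the trichotomy of Theorem~\ref{structure_thm} and show that case $(3')$ cannot occur when $f$ is real. Since $F_1(z),F_2(z)\in\R$, the conjugation acts trivially on their values, so
\[
CN(F)(z)=n(F_1(z))-n(F_2(z))+i\,t(F_1(z)F_2(z)^c)=F_1(z)^2-F_2(z)^2+2iF_1(z)F_2(z)=F(z)^2.
\]
As $\C$ has no zero divisors, $CN(F)(z)=0$ forces $F(z)=0$, which is case $(2')$, not $(3')$. Hence a real slice function has no $\SS_A$-isolated non-real zeros. Note that Theorem~\ref{structure_thm} applies: for $x\notin\R$ we have $\sd f(x)=F_2(z)/\IM(z)\in\R\subseteq N_A$.

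For part $(2)$, admissibility gives $\langle F_1(z),F_2(z)\rangle\subseteq N_A$ for every $z\in D$; in particular $\sd f(x)\in N_A$ at non-real $x$, so Theorem~\ref{structure_thm} applies to $f$. Admissibility also guarantees that $CN(F)(z)\in\C$ (as noted after the definition of admissibility), and hence $N(f)=\I(CN(F))$ is a real slice function. Writing $CN(F)(z)=u(z)+iv(z)$ with $u,v\in\R$, we get $N(f)(x)=u(z)+Jv(z)$, which vanishes if and only if $u(z)=v(z)=0$, i.e.\ $CN(F)(z)=0$. The key step is now simply to match cases: by Theorem~\ref{structure_thm}, $CN(F)(z)=0$ is equivalent to being in case $(2')$ or $(3')$, which is equivalent to $\SS_x\cap V(f)\neq\emptyset$.

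To finish, I would rewrite the right-hand side set-theoretically: since $\SS_y$ depends only on $(\alpha,\beta)$, one has $x\in\bigcup_{y\in V(f)}\SS_y$ iff $\SS_x\cap V(f)\neq\emptyset$. Combined with the equivalence above, this yields $V(N(f))=\bigcup_{y\in V(f)}\SS_y$. The only subtle point to double-check is the real case $x=\alpha\in\R$, where $\SS_\alpha=\{\alpha\}$ and $N(f)(\alpha)=n(F_1(\alpha))$: here admissibility gives $F_1(\alpha)=v_sf(\alpha)\in N_A$, so by Proposition~\ref{Properties_of_Q_A}(5) the norm vanishes iff $F_1(\alpha)=0$ iff $f(\alpha)=0$, in agreement with the theorem's case $(2')$. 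I do not expect any real obstacle in this proof — the main content has already been packaged into Theorem~\ref{structure_thm} and into the admissibility hypothesis, and the remaining work is just bookkeeping of the three cases.
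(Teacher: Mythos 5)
Your proof is correct and follows essentially the same route as the paper: part (1) via $CN(F)=F^2$ for real $f$ and the exclusion of case $(3')$, and part (2) via the chain $N(f)(x)=0\iff CN(F)(z)=0\iff \SS_x\cap V(f)\neq\emptyset$ from Theorem~\ref{structure_thm}. The only cosmetic difference is that the paper obtains the converse implication in (2) by applying the structure theorem to $N(f)$ itself (computing $CN(CN(F))=CN(F)^2$), whereas you read off $N(f)(x)=u(z)+Jv(z)=0\iff u=v=0$ directly; both are valid and your explicit treatment of the real points $x=\alpha$ is a welcome clarification.
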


\begin{proof}
If $f=\I(F)$ is real, then $CN(F)=F^2$. Therefore the third case of the theorem is excluded. If $f$ is admissible, then Theorem~\ref{structure_thm} can be applied.
If $x\in V(f)$, $x=\alpha+\beta J$, the proposition gives $CN(F)(z)=0$ ($z=\alpha+i\beta$) and when applied to 
$N(f)$ tells that $\SS_x\subseteq V(N(f))$. Conversely, since $N(f)$ is real, $N(f)(x)=0$ implies $0=CN(CN(F))(z)=CN(F)(z)^2$, i.e.\ $CN(F)(z)=0$. From the proposition applied to $f$ we get at least one point $y\in\SS_x\cap V(f)$. Then $x\in\SS_y$, $y\in V(f)$.
\end{proof}

\begin{theorem}
Let $\OO_D$ be connected.  If $f$ is slice regular and admissible on $\OO_D$, and $N(f)$  does not vanish identically, then  $\C_J\cap\bigcup_{x \in V(f)}\SS_x$ is closed and discrete in $D_J=\C_J\cap\OO_D$ for every $J\in\SS_A$. If $\OO_D\cap\R\ne\emptyset$, then $N(f)\equiv0$ if and only if $f\equiv0$.
\end{theorem}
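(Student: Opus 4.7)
The plan is to reduce the discreteness claim to the classical identity principle applied to the $\C$-valued holomorphic stem function of $N(f)$. First, admissibility of $f$ gives (by the remarks following the admissibility definition) that $CN(F)=n(F_1)-n(F_2)+i\,t(F_1F_2^c)$ is $\C$-valued, so $N(f)=\I(CN(F))$ is real and its stem function $CN(F):D\to\C$ is a genuine $\C$-valued holomorphic function, being the product $FF^c$ of two holomorphic $\Ac$-valued stem functions. By Corollary~\ref{cor13}(2),
\[\C_J\cap\bigcup_{x\in V(f)}\SS_x=\C_J\cap V(N(f)),\]
and the $\R$-linear homeomorphism $D\to D_J$, $\alpha+i\beta\mapsto\alpha+\beta J$, identifies this with $V(CN(F))\subset D$.

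Next I would pin down the component structure of $D$ using the connectedness of $\OO_D$. We may take $D$ symmetric about $\R$. The continuous surjection $\pi:\OO_D\to D^+:=D\cap\{\IM z\ge 0\}$, $x\mapsto\RE(x)+i\sqrt{n(\IM(x))}$, forces $D^+$ to be connected; hence $D$ is either connected (when $D\cap\R\neq\emptyset$, since then $D^+\cap\overline{D^+}\supseteq D\cap\R$) or the disjoint union of two conjugate components $D^+$ and $\overline{D^+}$. In either situation, complex intrinsicity $CN(F)(\bar z)=\overline{CN(F)(z)}$ pairs components under complex conjugation, so $N(f)\not\equiv 0$ (equivalently $CN(F)\not\equiv 0$ on $D$) forces $CN(F)\not\equiv 0$ on every connected component of $D$. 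The classical identity principle then yields that $V(CN(F))$ is discrete and closed in $D$, and the same holds for $\C_J\cap V(N(f))$ in $D_J$.

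For the second statement, if $\OO_D\cap\R\neq\emptyset$ then $D$ is connected by the previous paragraph. The implication $f\equiv 0\Rightarrow N(f)\equiv 0$ is immediate. Conversely, assume $N(f)\equiv 0$. Then $CN(F)\equiv 0$ on $D$, so in particular $n(F_1(\alpha))=0$ for every $\alpha\in D\cap\R$. Admissibility places $F_1(\alpha)=v_s f(\alpha)$ in $N_A$, and property~(5) of Proposition~\ref{Properties_of_Q_A} says that a nonzero element of $N_A$ has nonzero norm; thus $F_1(\alpha)=0$. Combined with $F_2(\alpha)=0$ (from the oddness of $F_2$ in $\IM z$), this gives $F\equiv 0$ on the open subset $D\cap\R$ of $\R$. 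Expanding $F=\sum_k F_\B^k\, u_k$ in a real basis of $A$ (Remarks~\ref{rem1}(2)), each coefficient $F_\B^k:D\to\C$ is holomorphic and vanishes on a set with an accumulation point in the connected set $D$, so $F_\B^k\equiv 0$ and hence $F\equiv 0$, $f\equiv 0$.

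The most delicate step I anticipate is the component analysis of $D$: the identity principle is inherently componentwise, and without the intrinsic reflection symmetry $CN(F)$ could conceivably vanish on one component of $D$ while being nonzero overall. The combination of $\OO_D$ connected, the continuous projection $\pi$, and the reflection identity for $CN(F)$ is precisely what restricts $D$ to at most two components exchanged by conjugation and rules out such pathology.
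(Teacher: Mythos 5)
Your proof is correct and follows essentially the same route as the paper: reduce to the $\C$-valued holomorphic function $CN(F)$ (the paper uses the equivalent restriction $N(f)_J$ on $D_J$), invoke the identity principle, and use admissibility together with $n(F_1)=0$ on $D\cap\R$ to force $F_1=0$ there. The only real difference is that you explicitly justify why $N(f)\not\equiv 0$ rules out vanishing on a single connected component of $D$ (via the at-most-two-components analysis and the intrinsic reflection identity), a point the paper passes over silently; this is a welcome tightening rather than a divergence.
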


\begin{proof}
The normal function $N(f)$ is a real slice regular function on $\OO_D$. For every $J\in\SS_A$, the restriction $N(f)_J:D_J\rightarrow\C_J$ is a holomorphic function, not identically zero (otherwise it would be $N(f)\equiv0$). Therefore its zero set 
\[\C_J\cap V(N(f))=\C_J\cap \bigcup_{x \in V(f)}\SS_x \]
is closed and discrete in $D_J$.
If there exists $x\in\OO_D\cap\R$ and $N(f)(x)=n(F_1(x))=0$, then $F(x)=F_1(x)=0$. Since $F$ is holomorphic, it can vanish on $\OO_D\cap\R$ only if $f\equiv0$ on $D$.
\end{proof}

In the quaternionic case, the structure theorem for the zero set of slice regular functions was proved by Pogorui and Shapiro \cite{PogoruiShapiro} for polynomials and by  Gentili and Stoppato \cite{GeSto2008Mich} for power series. See also \cite{CoSaStPreprintIsrael} for a similar result about slice monogenic functions.
 
\begin{remark}\label{rem8}
If $\OO_D$ does not intersect the real axis, a not identically zero slice regular function $f$ can have normal function $N(f)\equiv0$. For example, let $J\in\SS_\HH$ be fixed. The admissible slice regular function defined on $\HH\setminus\R$ by
\[f(x)=1-IJ\quad \text{($x=\alpha+\beta I\in\C_I^+$)}\]
is induced by a locally constant stem function and has zero normal function. Its zero set $V(f)$ is the half plane $\C_{-J}^+\setminus\R$. The function $f$ can be obtained by the representation formula \eqref{rep2} in Proposition \ref{pro5} by choosing the constant values 2 on $\C_J^+\setminus\R$ and $0$ on $\C_{-J}^+\setminus\R$.
\end{remark}

If an admissible slice function $f$ has $N(f)\not\equiv0$, then the $\SS_A$--isolated non--real zeros are genuine isolated points of $V(f)$ in $\OO_D$. In this case, 
$V(f)$ is a union of isolated ``spheres'' $\SS_x$ and isolated points.

\subsection{The Remainder Theorem}
\label{sec:TheRemainderTheorem}
In this section, we prove a division theorem, which generalizes a result  proved by Beck \cite{Beck1979} for quaternionic polynomials and by Ser\^odio \cite{Serodio2007} for octonionic polynomials.

\begin{definition}
For any $y\in Q_A$, the \emph{characteristic polynomial} of $y$  
 is the slice regular function on $Q_A$
\[\Delta_{y}(x):=N(x-y)=(x-y)\cdot (x-y^c)=x^2-x\,t(y)+n(y).\]
\end{definition}

\begin{proposition}
The characteristic polynomial $\Delta_{y}$ of $y\in Q_A$ is real. Two characteristic polynomials  $\Delta_{y}$ and $\Delta_{y'}$ coincides if and only if\, $\SS_{y}=\SS_{y'}$. Moreover,  $V(\Delta_{y})=\SS_{y}$.
\end{proposition}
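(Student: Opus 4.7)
My plan is to address the three claims in succession, each reducing to a direct algebraic manipulation.

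\textbf{Reality.} Since $y \in Q_A$, Proposition~\ref{Properties_of_Q_A}(3) gives $t(y), n(y) \in \R$, so $\Delta_y(x) = x^2 - t(y)\,x + n(y)$ is a polynomial with real coefficients. Its stem function $F(z) = z^2 - t(y)\,z + n(y)$ then has real-valued components $F_1, F_2$, so $\Delta_y$ is a real slice function by the definition given in Section~\ref{sec:Productofslicefunctions}.

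\textbf{Characterization of equal characteristic polynomials.} Using Proposition~\ref{pro2} I write $y = \RE(y) + \IM(y)$, and combine with $\IM(y)^2 = -n(\IM(y))$ (which follows from $t(\IM(y)) = 0$) to get $t(y) = 2\RE(y)$ and $n(y) = \RE(y)^2 + n(\IM(y))$. This makes the coefficients of $\Delta_y$ equivalent data to the pair $(\RE(y), n(\IM(y)))$. On the geometric side, Proposition~\ref{pro3} together with the symmetry $I \in \SS_A \Leftrightarrow -I \in \SS_A$ (from Proposition~\ref{Properties_of_Q_A}(6)) gives $\SS_y = \{\RE(y) + I\sqrt{n(\IM(y))} : I \in \SS_A\}$, so $\SS_y$ is also equivalent data to the same pair. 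Hence $\Delta_y = \Delta_{y'}$ iff $\SS_y = \SS_{y'}$.

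\textbf{Identification of the zero set.} For $\SS_y \subseteq V(\Delta_y)$: any $y' \in \SS_y$ belongs to $Q_A$ and satisfies $\Delta_{y'} = \Delta_y$ by the preceding step, while the quadratic identity $(y')^2 - t(y')\,y' + n(y') = 0$ from Proposition~\ref{Properties_of_Q_A}(3) yields $\Delta_y(y') = 0$. For the reverse inclusion, let $x \in V(\Delta_y) \subseteq Q_A$ and subtract the two quadratic relations $x^2 - t(y)\,x + n(y) = 0$ and $x^2 - t(x)\,x + n(x) = 0$ to obtain $(t(x) - t(y))\,x = n(x) - n(y) \in \R$. If $t(x) \neq t(y)$, this forces $x \in \R$, so $\Delta_y$ has a real root, and the discriminant inequality $t(y)^2 \geq 4n(y)$ combined with $4n(y) \geq t(y)^2$ from the definition of $Q_A$ (Definition~\ref{def3}) yields equality, forcing $y \in \R$ and $x = t(y)/2 = y \in \SS_y$. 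If instead $t(x) = t(y)$, then also $n(x) = n(y)$, and by the second claim $\SS_x = \SS_y$, giving $x \in \SS_y$.

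The main obstacle is the case analysis in the last step: the key trick is to combine the defining quadratic equation $\Delta_x(x) = 0$ for $x \in Q_A$ with the hypothesis $\Delta_y(x) = 0$ to pin down $(t(x), n(x))$ in terms of $(t(y), n(y))$, modulo the degenerate case where $x$ is forced into $\R$, which is handled by the definition of $Q_A$.
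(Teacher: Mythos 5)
Your proof is correct, and for the first two claims it runs exactly as the paper does: reality of $\Delta_y$ comes from $t(y),n(y)\in\R$ in the definition of $Q_A$, and the equivalence $\Delta_y=\Delta_{y'}\Leftrightarrow\SS_y=\SS_{y'}$ comes from the fact that both the coefficient pair $(t(y),n(y))$ and the sphere $\SS_y$ encode precisely the data $(\RE(y),n(\IM(y)))$. Where you genuinely diverge is the last claim. The paper disposes of $V(\Delta_y)=\SS_y$ in one line, ``it follows from $\Delta_y(y)=0$ and the reality of $\Delta_y$,'' which implicitly leans on the structure theory already developed: reality forces the zero set to be a union of spheres, and on each slice $\C_J$ the restriction is the complex quadratic $z^2-zt(y)+n(y)$, whose two roots are exactly $\SS_y\cap\C_J$. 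You instead prove the inclusion $V(\Delta_y)\subseteq\SS_y$ by pure algebra: subtracting the hypothesis $x^2-xt(y)+n(y)=0$ from the intrinsic relation $x^2-xt(x)+n(x)=0$ of Proposition~\ref{Properties_of_Q_A}(3) pins down $t(x)=t(y)$ and $n(x)=n(y)$ (the degenerate case $t(x)\ne t(y)$ being excluded via the discriminant and the inequality $4n(y)\ge t(y)^2$ defining $Q_A$), and then the second claim identifies $\SS_x$ with $\SS_y$. This is more self-contained -- it needs no appeal to holomorphy of the restrictions or to the zero-set structure theorem -- at the cost of a small case analysis; the paper's route is shorter given the machinery it has already built. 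The only cosmetic slip is attributing the reality of $t(y)$ and $n(y)$ to Proposition~\ref{Properties_of_Q_A}(3) rather than to Definition~\ref{def3} itself, which is harmless.
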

\begin{proof}
The first property comes from the definition of the quadratic cone. Since $n(x)=n(\RE(x))+n(\IM(x))$ for every $x\in Q_A$, two elements $y,y'\in Q_A$ have equal trace and norm if and only if $\RE(y)=\RE(y')$ and $n(\IM(y))=n(\IM(y'))$.
Since $\SS_y$ is completely determined by its ``center'' $\alpha=\RE(y)$ and its ``squared radius'' $\beta^2=n(\IM(y))$,
$\Delta_{y}=\Delta_{y'}$ if and only if\, $\SS_{y}=\SS_{y'}$. The last property follows from $\Delta_y(y)=0$ and the reality of $\Delta_y$.
\end{proof}


\begin{theorem}[Remainder Theorem]\label{Remainder_Theorem}
Let $f\in\SR$ be an admissible slice regular function. 
Let $y\in V(f)=\{x\in Q_A\ |\ f(x)=0\}$. Then the following statements hold.
\begin{itemize}
\item[$(1)$]
	If $y$ is a {real zero}, then there exists 
	$g\in\SR$ such that\\ $f(x)=(x-y)\,g(x)$.
\item[$(2)$]
	If $y\in \OO_D\setminus\R$, then there exists 
	$h\in\SR$ and $a,b\in A$ such that $\langle a,b\rangle\subseteq N_A$ and $f(x)=\Delta_{y}(x)\,h(x)+xa+b$. Moreover,
\begin{itemize}
\item[$\bullet$]
$y$ is a {spherical zero} of $f$ if and only if $a=b=0$.
\item[$\bullet$]
$y$ is an $\SS_A$--isolated non--real zero of $f$ if and only if $a\ne0$ (in this case $y=-ba^{-1}$).
\end{itemize}
\end{itemize}
If there exists a real subspace $V\subseteq N_A$ such that $F(z)\in V\otimes\C$ for all $z\in D$, then $g$ and $h$ are admissible.
If $f$ is real, then $g,h$  are real and $a=b=0$.
\end{theorem}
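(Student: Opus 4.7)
The plan is to lift the problem to stem functions on $D \subseteq \C$, perform the division there by classical holomorphic means, and translate back via the $\I$ operator.

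For case (1), $y \in \R \cap D$. The even--odd parity of $(F_1, F_2)$ forces $F_2(y) = 0$, while $f(y) = 0$ gives $F_1(y) = 0$, so $F(y) = 0$. Holomorphic division then yields $F(z) = (z - y) G(z)$ with $G : D \to \Ac$ holomorphic, and $G$ inherits complex intrinsicity from $F$ and $z - y$ (verified by a direct conjugation computation). Setting $g := \I(G) \in \SR$ and noting that the stem function $z - y$ has real components (so the slice product equals the pointwise product), I obtain $f(x) = (x - y) g(x)$.

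For case (2), write $y = \alpha + \beta J$ and $z_0 = \alpha + i\beta \in D$. The stem function of $\Delta_y$ is the real polynomial $P(z) := (z - z_0)(z - \overline{z_0})$. I will construct the remainder $L(z) := az + b$ with $a, b \in A$ by interpolation at the two simple roots of $P$: the condition $L(z_0) = F(z_0)$, combined with the complex intrinsicity of $L$ (which makes $L(\overline{z_0}) = \overline{F(z_0)}$ automatic), uniquely determines
\[a = \frac{F_2(z_0)}{\beta}, \qquad b = F_1(z_0) - \frac{\alpha}{\beta} F_2(z_0),\]
both in $A$. Since $a, b$ are real linear combinations of $F_1(z_0)$ and $F_2(z_0)$, admissibility of $f$ gives $\langle a, b \rangle \subseteq N_A$. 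The function $H(z) := (F(z) - L(z))/P(z)$ extends holomorphically across $z_0, \overline{z_0}$ (since $F - L$ vanishes there) and is complex intrinsic; setting $h := \I(H)$ and checking that $\I(L)(x) = xa + b$ (a short computation exploiting centrality of real scalars in $A$) yields $f(x) = \Delta_y(x) h(x) + xa + b$.

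The zero-type classification then follows from Theorem~\ref{structure_thm}. Spherical zeros correspond to $F(z_0) = 0$, i.e.\ $F_1(z_0) = F_2(z_0) = 0$, i.e.\ $a = b = 0$. If $y$ is $\SS_A$-isolated and non-real, the structure theorem gives $F_1(z_0) + JF_2(z_0) = 0$ with $F(z_0) \neq 0$, forcing $F_2(z_0) \neq 0$ and hence $a \neq 0$; substituting $F_1(z_0) = -JF_2(z_0) = -\beta(Ja)$ into the formula for $b$ yields $b = -\alpha a - \beta(Ja) = -(\alpha + \beta J) a = -ya$, so $y = -b a^{-1}$ by invertibility of $a \in N_A \setminus \{0\}$ (Proposition~\ref{Properties_of_Q_A}(5)). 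For the refinements, if $F$ takes values in $V \otimes \C$ for a real subspace $V \subseteq N_A$, then $a, b \in V$ and the quotients $G, H$ also stay in $V \otimes \C$, so $g$ and $h$ are admissible; if $f$ is real then $F_1, F_2$ are $\R$-valued, so $G$ is $\R$-valued in case (1), while in case (2) Corollary~\ref{cor13}(1) rules out $\SS_A$-isolated non-real zeros, forcing $a = b = 0$ and $H = F/P$ real-valued. The main technical care is in the bookkeeping of complex intrinsicity and admissibility through each algebraic manipulation, and in the identification $\I(L) = xa + b$ in the non-associative setting --- where only centrality of reals, not full associativity, is needed.
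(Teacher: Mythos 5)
Your proof is correct, and for part (2) it takes a genuinely different route from the paper's. The paper creates a function vanishing at $\zeta=\alpha+i\beta$ by forming $F_K:=(1-iK)F$ (where $K$ is the special square root of $-1$ with $F_1(\zeta)+KF_2(\zeta)=0$), performs two successive holomorphic divisions by the linear factors $z-\zeta$ and $z-\bar\zeta$, and then recovers $F$ as the complex--intrinsic part $F_K^+$ of the result; the remainder $za+b$ and the identification $a=\beta^{-1}F_2(\zeta)$, $b=F_1(\zeta)-\alpha\beta^{-1}F_2(\zeta)$ emerge at the end by evaluating at $\zeta$. You instead posit the linear remainder up front by interpolation at $\zeta$ (the value at $\bar\zeta$ being forced by intrinsicity), and divide $F-L$ by the central, complex--valued polynomial $\Delta_\zeta(z)$ in one step. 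Your version avoids the $(1-iK)$ device and the symmetrization $F\mapsto F^+$ entirely, which makes the complex intrinsicity and the $V\otimes\C$ bookkeeping for $H$ more transparent (one only divides by a nonvanishing central complex scalar away from $\zeta,\bar\zeta$ and invokes removability of simple singularities componentwise); the two arguments produce identical $a$, $b$, and $H$. The remaining ingredients --- the derivation of $\langle a,b\rangle\subseteq N_A$ from admissibility, the classification of zero types via the structure theorem, the computation $b=-ya$, and the use of Corollary~\ref{cor13} in the real case --- coincide with the paper's. One small point worth making explicit if you write this up: in the identity $\I(L)(x)=xa+b$ you correctly note that only centrality of $\R\otimes_\R\C$ in $\Ac$ is needed, which is exactly the fact the paper also relies on.
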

\begin{proof}
We can suppose that $\text{conj}(D)=D$. In the proof, we will use the following fact. For every holomorphic function $F: D\rightarrow\Ac$, there is a unique decomposition $F=F^++F^-$, with $F^\pm$ holomorphic, $F^+$ complex intrinsic and $F^-$ satisfying $F^-(\bar z)=-\overline{F^-(z)}$. It is enough to set $F^{\pm}(z):=\frac12(F(z)\pm\overline{F(\bar z)})$.

Assume that $y$ is a real zero of $f=\I(F)$. Then $F(y)=0$ and therefore, by passing to a basis representation, we get $F(z)=(z-y)\,G(z)$ for a holomorphic mapping $G:D\rightarrow\Ac$. $G$ is complex intrinsic: $F(\bar z)=(\bar z-y)\,G(\bar z)=\overline{F(z)}=(\bar z-y)\,\overline{G(z)}$, from which it follows that $\overline{G(z)}=G(\bar z)$ for every $z\ne y$ and then on $D$ by continuity. Let $g=\I(G)\in\SR$. Then $f(x)=(x-y)\,g(x)$.
Assume that $F(z)\in V\otimes\C\,\forall z\in D$.
Since $x-y$ is real, $G_1(z)$ and $G_2(z)$ belong to $V\subseteq N_A$ for every $z\ne y$ and then also for $z=y$. Therefore $g$ is admissible.

Now assume that $y=\alpha+\beta J\in V(f)\setminus\R$. Let $\zeta=\alpha+i\beta\in D$. If $F(\zeta)\ne0$, there exists a unique $K\in\SS_A$ such that $F_1(\zeta)+KF_2(\zeta)=0$ (cf.\ Theorem~\ref{structure_thm}). Otherwise, let $K$ be any square root of $-1$. Let $F_K:=(1-iK)F$. Then $F_K$ is a holomorphic mapping from $D$ to $\Ac$, vanishing at $\zeta$:
\[F_K(\zeta)=F_1(\zeta)+iF_2(\zeta)-iKF_1(\zeta)+KF_2(\zeta)=0.\]
Then there exists a holomorphic mapping $G:D\rightarrow\Ac$ such that \[F_K(z)=(z-\zeta)\,G(z).\] Let $G_1(z)$ be the holomorphic mapping such that $G(z)-G(\bar\zeta)=(z-\bar\zeta)\,G_1(z)$ on $D$. Then
\[F_K(z)=(z-\zeta)((z-\bar\zeta)\,G_1(z)+G(\bar\zeta))=\Delta_y(z)\,G_1(z)+(z-\zeta)G(\bar\zeta).\]
Here $\Delta_y(z)$ denotes the complex intrinsic polynomial $z^2-z\,t(y)+n(y)$, which induces the characteristic polynomial $\Delta_y(x)$.

Let $F^+_K$ be the complex intrinsic part of $F_K$  
It holds $F^+_K=F$:
\[F^+_K(z)=\frac12(F_K(z)+\overline{F_K(\overline z)})=\frac{1-iK}2F(z)+\frac{1+iK}2\overline{F(\overline z)}=F(z).\]
Therefore
\[F(z)=\Delta_y(z)\,G_1^+(z)+\frac12((z-\zeta)\, G(\overline\zeta)+(z-\overline\zeta)\,\overline{G(\overline\zeta)})=\Delta_y(z)\,H(z)+za+b,\]
where $H=G_1^+$ is complex intrinsic and $a,b\in A$. Since  $F(\zeta)=\zeta a+b$, the elements $a=\beta^{-1}F_2(\zeta)$ and $b=F_1(\zeta)-\alpha\beta^{-1}F_2(\zeta)$ belong to $\langle F_1(\zeta),F_2(\zeta)\rangle\subseteq N_A$. 
If there exists $V\subseteq N_A$ such that $F(z)\in V\otimes\C\,\forall z\in D$, then it follows that $F(z)-za-b=\Delta_y(z)\,H(z)$ belongs to $V\otimes\C$ for all $z\in D$. From this and from the reality of $\Delta_y$, it follows, by a continuity argument, that $H(z)\in V\otimes\C\ \forall z\in D$, i.e.\ $h$ is admissible.

If $f$ is real, then $\langle F_1(z),F_2(z)\rangle\subseteq\R$ for every $z\in D$ and we get the last assertion of the theorem.
\end{proof}

\begin{remark}
A simple computation shows that 
\[\sd \Delta_y(x)=t(x)-t(y)\text{\quad and\quad}v_s \Delta_y(x)=\frac12t(x)(t(x)-t(y))-n(x)+n(y).\]
It follows that, for every non--real $y\in V(f)$, the element $a\in N_A$ which appears in the statement of the preceding theorem is the spherical derivative of $f$ at $x\in\SS_y$. 
\end{remark}

\begin{example}
The function $f(x)=1-IJ$ ($J$ fixed in $\SS_\HH$, $x=\alpha+\beta I\in\HH\setminus \R, \beta>0$) of Remark~\ref{rem8} of Section~\ref{sec:ZerosOfSliceFunctions} vanishes at $y=-J$. The division procedure gives
\[f(x)=(x^2+1)\,h(x)-xJ+1\]
with $h=\I(H)$ induced on $\HH\setminus\R$ by the holomorphic function
\[H(z)=\begin{cases}\frac J{z+i}\quad\text{on $\C^+=\{z\in\C\ |\ Im(z)>0\}$}\\\frac J{z-i}\quad\text{on $\C^-=\{z\in\C\ |\ Im(z)<0\}$}\end{cases}.\]
\end{example} 

\begin{remark}
In  \cite{GhPePreprint2009}, it was proved that when $A=\HH$ or $\Oc$, part (1) of the preceding theorem holds for {every} $y\in V(f)$:  $f(x)=(x-y)\cdot g(x)$. This can be seen in the following way (we refer to the notation used in the proof of the Remainder Theorem). From
\[F(z)=(z-\zeta)(z-\bar\zeta)\,H(z)+za+b=(z-y)((z-y^c)\,H(z)+a)+ya+b,\]
we get $f(x)=(x-y)\cdot g(x)+ya+b$, where $g=\I((z-y^c)\,H(z)+a)$. But $ya+b=0$ and then we get the result.
\end{remark}

\begin{corollary}\label{cor_remainder}
Let $f\in\SR$ be admissible.
If\, $\SS_y$ contains at least one zero of $f$, of whatever type, then $\Delta_y$ divides $N(f)$.
\end{corollary}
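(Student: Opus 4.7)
The plan is to reduce the statement to a divisibility of stem functions in $\mathcal{O}(D)$. Since $f$ is admissible, the stem function $CN(F)=F\cdot F^c$ of $N(f)$ is genuinely $\C$-valued, and $\Delta_y$ is induced by the complex-intrinsic polynomial $\Delta_y(z)=(z-\zeta)(z-\overline{\zeta})\in\C[z]$, where $\zeta=\alpha+i\beta$ with $y=\alpha+\beta J$. It suffices to show $\Delta_y(z)\mid CN(F)(z)$ in $\mathcal{O}(D)$: writing $CN(F)=\Delta_y\cdot Q$, the quotient $Q$ is automatically complex intrinsic (the ratio of two complex-intrinsic holomorphic functions off a discrete set extends by Riemann's removable singularity theorem, componentwise in a basis of $A$, to a complex-intrinsic holomorphic function on $D$), so $N(f)=\I(CN(F))=\I(\Delta_y\cdot Q)=\Delta_y\cdot\I(Q)$ is the required factorization in $\SR$, with $\Delta_y$ commuting with $\I(Q)$ because $\Delta_y$ is real.

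First, by hypothesis there exists $y'\in\SS_y\cap V(f)$, hence by Corollary~\ref{cor13}(2) the whole sphere $\SS_y=\SS_{y'}$ is contained in $V(N(f))$. In particular $N(f)(y)=0$; since $CN(F)(\zeta)\in\C$, the identity $N(f)(y)=CN(F)_1(\zeta)+J\,CN(F)_2(\zeta)=0$ forces both real components to vanish, i.e.\ $CN(F)(\zeta)=0$.

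If $y\notin\R$, then $\zeta\ne\overline{\zeta}$, and the complex intrinsicity of $CN(F)$ gives $CN(F)(\overline{\zeta})=\overline{CN(F)(\zeta)}=0$; hence both distinct simple roots of the polynomial $\Delta_y(z)$ are zeros of the holomorphic function $CN(F)(z)$, yielding $\Delta_y\mid CN(F)$. If instead $y\in\R$, then $\SS_y=\{y\}$ forces $y'=y$, and $\Delta_y(z)=(z-y)^2$; the vanishing $f(y)=0$ together with $F_2(y)=0$ (by oddness in $\beta$) gives $F(y)=0$ in $\Ac$, hence $F^c(y)=0$ as well, so the product $CN(F)=F\cdot F^c$ has a zero of order at least $2$ at $z=y$. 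In both cases $\Delta_y(z)\mid CN(F)(z)$ in $\mathcal{O}(D)$.

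The only nonroutine point is the reality-case argument, which relies on the stem-function picture to promote the simple vanishing of $F$ at a real point into a double vanishing of $CN(F)$; this is the reason one cannot simply invoke the Remainder Theorem for $N(f)$ at $y$ once. Everything else — the reduction to stem functions, the recovery of complex intrinsicity of the quotient, and the non-real case — is forced by admissibility (which makes $CN(F)$ complex-valued) and by the structural Corollary~\ref{cor13}(2).
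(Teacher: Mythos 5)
Your proof is correct, and it takes a genuinely different route from the paper's. The paper argues at the level of slice functions: it invokes the Remainder Theorem to split into the three cases (real, spherical, $\SS_A$--isolated non--real zero), writes $f=(x-y)g$, $f=\Delta_y h$, or $f=\Delta_y h+xa+b$ accordingly, and then computes $N(f)$ directly in each case --- the last case requiring the algebraic identities $n(b)=n(y_1)n(a)$ and $t(y_1)n(a)=-t(ab^c)$ to recognize $x^2 n(a)+x\,t(ab^c)+n(b)$ as $\Delta_y\,n(a)$. You instead work entirely at the level of stem functions: Corollary~\ref{cor13}(2) gives $N(f)(y)=0$, admissibility makes $CN(F)$ complex--valued so that both components vanish at $\zeta$, intrinsicity transports the zero to $\bar\zeta$, and elementary factorization of holomorphic functions yields $\Delta_\zeta(z)\mid CN(F)(z)$, which the paper itself notes (in the paragraph following the corollary) is equivalent to $\Delta_y\mid N(f)$. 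Your version is shorter and avoids both the trichotomy and the algebraic computation; it is a clean illustration of the stem--function philosophy. What the paper's proof buys in exchange is extra information used later: in the spherical case it shows $N(f)=\Delta_y^2 N(h)$, i.e.\ divisibility by $\Delta_y^2$, which underlies the subsequent observation that spherical zeros have multiplicity at least $2$; your argument only certifies a single factor of $\Delta_y$ (though it could be upgraded by noting $F(\zeta)=F(\bar\zeta)=0$ in that case). Two small points to make explicit: you need $\bar\zeta\in D$, which is the paper's standing normalization $\mathrm{conj}(D)=D$; and the removable--singularity language is mild overkill, since the power--series factorization $g(z)=(z-p)h(z)$ at a zero already does the job componentwise.
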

\begin{proof}
If $y\in\R$, then $\Delta_y(x)=(x-y)^2$. From the Theorem, $f=(x-y)\,g$, $f^c=(x-y)\,g^c$, and then
$N(f)=(x-y)^2\, g\cdot g^c=\Delta_y\, N(g)$.
If $y$ is a spherical zero, then $f=\Delta_y\,h$ and therefore $f^c=\Delta_y\,h^c$, $N(f)=f\cdot f^c=\Delta_y^2\, N(h)$.
If $y_1\in V(f)\cap\SS_y$ is an $\SS_A$--isolated, non--real zero, then 
\[f=\Delta_y\,h+xa+b,\quad f^c=\Delta_y\,h^c+xa^c+b^c,\]
from which it follows that 
\begin{align*}
N(f)&=\Delta_y\big[\Delta_y\,N(h)+h\cdot(xa^c+b^c)+(xa+b)\cdot h^c\big]+\\
&+x^2\,n(a)+x\,t(ab^c)+n(b).
\end{align*}
Since $y_1a+b=0$ with $\langle a,b\rangle$ contained in $N_A$, it follows that $n(b)=n(y_1)n(a)$, the trace of $ab^c$ is real, and
\[t(y_1)n(a)=(y_1+y_1^c)(aa^c)=(y_1a)a^c+a(a^cy_1^c)=-ba^c-ab^c=-t(ab^c).\]
Then $x^2\,n(a)+x\,t(ab^c)+n(b)=\Delta_{y_1} n(a)=\Delta_{y} n(a)$ and $\Delta_{y}\mid N(f)$.
\end{proof}

Let $f\in\SR$ be admissible. Let $y=\alpha+\beta J$, $\zeta=\alpha+i\beta$. Since $f$ is admissible, $N(f)$ is real and therefore if $N(f)=\Delta_y^s\, g$, also the quotient $g$ is real. The relation $N(f)=\Delta_y^s\, g$ is equivalent to the complex equality $CN(F)(z)=\Delta_\zeta(z)^s\,G(z)$, where $\Delta_\zeta(z)=z^2-z\,t(y)+n(y)=(z-\zeta)(z-\bar\zeta)$ and $\I(G)=g$. If $N(f)\not\equiv0$, then $CN(F)\not\equiv0$ and therefore $\zeta$ has a multiplicity as a (isolated) zero of the holomorphic function $CN(F)$ on $D$.

In this way, we can introduce, for any admissible slice regular function $f$ with $N(f)\not\equiv0$, the concept of multiplicity of its zeros.

\begin{definition}\label{def_mult}
Let $f\in\SR$ be admissible, with $N(f)\not\equiv0$.
Given a non--negative integer $s$ and an element $y$ of $V(f)$, we say that $y$ is a \emph{zero of $f$ of multiplicity $s$} if $\Delta_y^s \mid N(f)$ and $\Delta_y^{s+1} \nmid N(f)$. We will denote the integer $s$, called \emph{multiplicity of $y$}, by $m_f(y)$.
\end{definition}


In the case of $y$  real, the preceding condition is equivalent to $(x-y)^s\mid f$ and $(x-y)^{s+1}\nmid f$. If $y$ is a spherical zero, then $\Delta_y$ divides $f$ and $f^c$. Therefore $m_f(y)$ is at least 2.
If $m_f(y)=1$, $y$ is  called a \emph{simple zero of $f$}. 

\begin{remark}
In the case of  on quaternionic polynomials, the preceding definition is equivalent to the one given in \cite{BrayWhaples1983} and in \cite{GeSt2008Milan}. 
 \end{remark}

\subsection{Zeros of products}
\label{sec:ZerosOfProducts}

\begin{proposition}
Let $A$ be associative. Let $f,g\in\Sl(\OO_D)$. Then $V(f)\subseteq V(f\cdot g)$.
\end{proposition}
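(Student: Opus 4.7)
The plan is to show that if $f(y)=0$ then $(f\cdot g)(y)=0$ by direct computation, using that $f\cdot g=\I(FG)$ where $FG$ is the pointwise product of stem functions in $\Ac$. Write $y=\alpha+\beta J$ with $J\in\SS_A$ and $z=\alpha+i\beta\in D$; the hypothesis $f(y)=F_1(z)+JF_2(z)=0$ supplies the key relation $F_1(z)=-JF_2(z)$. Since $FG=(F_1G_1-F_2G_2)+i(F_1G_2+F_2G_1)$, the definition of slice function gives
\[
(f\cdot g)(y)=F_1(z)G_1(z)-F_2(z)G_2(z)+J\bigl(F_1(z)G_2(z)\bigr)+J\bigl(F_2(z)G_1(z)\bigr),
\]
and I would substitute $F_1(z)=-JF_2(z)$ into this expression and verify that the four terms cancel in pairs.

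The cancellation uses associativity of $A$ in two places. On the one hand, $(-JF_2(z))G_1(z)=-J\bigl(F_2(z)G_1(z)\bigr)$ cancels the summand $J(F_2(z)G_1(z))$; on the other hand, and more critically,
\[
J\bigl((-JF_2(z))G_2(z)\bigr)=\bigl(J\cdot(-J)\bigr)\,F_2(z)G_2(z)=F_2(z)G_2(z)
\]
cancels the summand $-F_2(z)G_2(z)$. The sole obstacle is exactly these two reassociations: both require the vanishing of associators such as $(J,F_2(z),G_1(z))$ and $(J,-JF_2(z),G_2(z))$, which is guaranteed by associativity of $A$. In a non-associative setting (for instance $A=\Oc$) these associators need not vanish, which accounts for why the hypothesis cannot be dropped.

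For the degenerate case $y\in\R$, oddness of $F_2$ in the imaginary part of $z$ gives $F_2(z)=0$, whence $F_1(z)=f(y)=0$, so $(FG)(z)=0$ and $(f\cdot g)(y)=0$ trivially. Combining the two cases yields $V(f)\subseteq V(f\cdot g)$.
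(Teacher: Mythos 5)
Your proposal is correct and follows essentially the same route as the paper: substitute $F_1(z)=-JF_2(z)$ into the expression for $(f\cdot g)(y)$ and cancel the four terms in pairs, with associativity of $A$ used exactly where you indicate. The paper's proof is a single displayed computation that absorbs the real case via $\beta=0$, but the content is identical.
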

\begin{proof}
Assume that $f(y)=0$. Let $y=\alpha+\beta J$, $z=\alpha+i\beta$ (with $\beta=0$ if $y$ is real). Then $0=f(y)=F_1(z)+JF_2(z)$. Therefore
\begin{align*}
(f\cdot g)(y)&=(F_1(z)G_1(z)-F_2(z)G_2(z))+J(F_1(z)G_2(z)+F_2(z)G_1(z))=\\
&=(-JF_2(z)G_1(z)-F_2(z)G_2(z))+J(-JF_2(z)G_2(z)+F_2(z)G_1(z))=0.
\end{align*}
\end{proof}

As shown in \cite{Serodio2007} for octonionic polynomials and in \cite{GhPePreprint2009} for octonionic power series, if $A$ is not associative the statement of the proposition is no more true. However, we can still say something about the location of the zeros of $f\cdot g$. For quaternionic and octonionic power series, we refer to \cite{GhPePreprint2009} for the precise relation linking the zeros of $f$ and $g$ to those of $f\cdot g$. For the associative case, see also \cite[\S16]{Lam}.

\begin{proposition}\label{pro15}
Let $A$ be associative or $A=\Oc$.  Assume that $n(x)=n(x^c)\ne0$ for every $x\in A\setminus\{0\}$ such that $n(x)$ is real. If $f$ and $g$ are admissible slice functions on $\OO_D$, then it holds:
\[\bigcup_{x\in V(f \cdot g)}\is_{x} \; \; = \, \, \bigcup_{x\in V(f)\cup V(g)}\is_{x}\]
or, equivalently, given any $x \in \OO_D$, $V(f\cdot g)\cap \is_{x}$ is non--empty if and only if $\left(V(f)\cup V(g)\right)\cap\is_{x}$ is non--empty. In particular, the zero set of $f\cdot g$ is contained in the union  $\bigcup_{x\in V(f)\cup V(g)}\is_{x}$.
\end{proposition}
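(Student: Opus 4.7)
The plan is to chain together the normal function multiplicativity (Theorem~\ref{thmNfg}), the characterization of the zero set of an admissible normal function (Corollary~\ref{cor13}(2)), and the admissibility of the product (Corollary~\ref{cor15}). The guiding observation is that the statement involves only the \emph{sphere-saturation} of the zero sets, which for admissible slice functions is precisely what the normal function detects.

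First I would invoke Corollary~\ref{cor15} to conclude that $f\cdot g$ is itself admissible under the standing hypotheses on $A$. This allows Corollary~\ref{cor13}(2) to be applied to $f$, $g$ and $f\cdot g$ simultaneously, yielding
\[
\bigcup_{x\in V(f\cdot g)}\SS_x = V(N(f\cdot g)), \qquad \bigcup_{x\in V(f)}\SS_x = V(N(f)), \qquad \bigcup_{x\in V(g)}\SS_x = V(N(g)).
\]
Next, by Theorem~\ref{thmNfg}, $N(f\cdot g)=N(f)\,N(g)$. Since admissible slice functions have real normal function, both $N(f)$ and $N(g)$ are real slice functions; the remark on products of real slice functions after the definition of $f\cdot g$ then implies that the slice product $N(f)\cdot N(g)$ coincides with the pointwise product in $A$ (in fact in $\R$).

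The only nontrivial step is therefore the elementary equality $V(N(f)\,N(g))=V(N(f))\cup V(N(g))$, which holds because $N(f)(x)$ and $N(g)(x)$ are real numbers for every $x\in\OO_D$, and the product of two real numbers vanishes exactly when at least one of them does. Combining this with the three identifications above gives
\[
\bigcup_{x\in V(f\cdot g)}\SS_x = V(N(f))\cup V(N(g)) = \bigcup_{x\in V(f)\cup V(g)}\SS_x,
\]
which is the asserted equality. The last sentence of the proposition, $V(f\cdot g)\subseteq\bigcup_{x\in V(f)\cup V(g)}\SS_x$, is then immediate since $V(f\cdot g)$ is contained in its own sphere-saturation.

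I do not expect a serious obstacle: admissibility of $f\cdot g$ (needed to apply Corollary~\ref{cor13} to the product) and the multiplicativity $N(f\cdot g)=N(f)\,N(g)$ (needed to split the zero set) are exactly the two nontrivial inputs, and both are already available under the stated hypotheses on $A$. The only minor care required is to verify that, when passing from $N(f)\cdot N(g)$ (product of slice functions) to $N(f)(x)\,N(g)(x)$ (product of values), one is using the fact that both factors are real-valued slice functions, so no complication from non-associativity or zero-divisors in $A$ arises.
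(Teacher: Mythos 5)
Your proposal is correct and follows exactly the paper's own argument: admissibility of $f\cdot g$ via Corollary~\ref{cor15}, the identification $\bigcup_{x\in V(h)}\SS_x=V(N(h))$ from Corollary~\ref{cor13} applied to $f$, $g$ and $f\cdot g$, and the multiplicativity $N(f\cdot g)=N(f)\,N(g)$ from Theorem~\ref{thmNfg}, with the final inclusion following from $V(f\cdot g)\subseteq V(N(f\cdot g))$. Your extra remark that the zero set of the product of two real-valued slice functions splits as the union of the zero sets is exactly the (implicit) elementary step the paper relies on.
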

\begin{proof}
By combining Corollary~\ref{cor13}, Theorem~\ref{thmNfg}  and Corollary~\ref{cor15}, we get:
\begin{eqnarray*}
\textstyle \bigcup_{x\in V(f \cdot g)}\is_{x} &=& V(N(f \cdot g)) = V(N(f))\cup V(N(g))=
\bigcup_{x\in V(f)\cup V(g)}\is_{x}.
\end{eqnarray*}
Since $V(f \cdot g) \subseteq V(N(f \cdot g))$, the proof is complete.
\end{proof}

\begin{example}
Let $A=\R_3$.  Consider the polynomials  $f(x)=xe_2+e_1$, $g(x)=xe_3+e_2$ of Example~\ref{ex4} of Section~\ref{sec:NormalFunctionAndAdmissibility}. Then  $(f\cdot g)(x)=x^2e_{23}+x(e_{13}-1)+e_{12}$. The zero sets of $f$ and $g$ are $V(f)=\{e_{12}\}$, $V(g)=\{e_{23}\}$. The remainder theorem gives $f\cdot g=\Delta_{e_{12}}e_{23}+x(e_{13}-1)+(e_{12}-e_{23})$, from which we get the unique isolated zero $e_{12}$ of $f\cdot g$, of multiplicity 2.
\end{example}

\subsection{The Fundamental Theorem of Algebra}
\label{sec:FTA}
In this section, we focus our attention on the zero set of slice regular admissible \emph{polynomials}.
If $p(x)=\sum_{j=0}^m x^j a_j$ is an admissible polynomial of degree $m$ with coefficients $a_j\in A$, the normal polynomial 
\[\textstyle
N(p)(x)=(p \ast p^c)(x)=\sum_nx^n\big(\sum_{j+k=n}a_ja_k^c\big)=:\sum_n x^n c_n
\] 
has degree $2m$ and real coefficients. The last property is immediate when $Q_A=A$ (i.e.\ $A=\HH$ or $\Oc$), since
\[c_n=\sum_{j+k=n}a_ja_k^c=\sum_{j=0}^{\left[\frac{n-1}2\right]}t(a_ja_{n-j}^c)+n(a_{n/2})\]
(if $n$ is odd the last term is missing). In the general case, if $p$ is admissible and $p=\I(P)$, $N(p)$ is real, i.e.\ the polynomial $CN(P)(z)=\sum_{n=0}^{2m} z^n c_n$
is complex--valued. From this, it follows easily that the coefficients $c_n$ must be real. Indeed, $c_0=CN(P)(0)\in \C\cap A=\R$. Moreover, since $CN(P)(z)-c_0$ is complex for every $z$, also $c_1+zc_2+\cdots+z^{2m-1}c_{2m}\in\C$ and then $c_1\in \R$. By repeating this argument, we get that every $c_n$ is real.

\begin{theorem}[Fundamental Theorem of Algebra with multiplicities]
\label{FTA}
Let $p(x)=\sum_{j=0}^m x^j a_j$ be a polynomial of degree $m>0$ with coefficients in $A$. 
Assume that $p$ is admissible $($for example, this is true in the case in which
the real vector subspace $\langle a_0,\ldots,a_m\rangle$ of $A$ is contained in $N_A$$)$.
Then $V(p)=\{y\in Q_A\ |\ p(y)=0\}$ is non--empty.
More precisely, there are distinct ``spheres''\, $\SS_{x_1},\ldots, \SS_{x_t}$ such that 
\[V(p)\subseteq\bigcup_{k=1}^t \SS_{x_k}=V(N(p)),\quad V(p)\cap\SS_{x_j}\ne\emptyset \text{\quad for every $j$},\] 
and, for any choice of zeros\, $y_1\in\SS_{x_1}, \ldots,y_t\in\SS_{x_t}$ of $p$,
the following equality holds:
\[\sum_{k=1}^t m_p(y_k)=m.\]
\end{theorem}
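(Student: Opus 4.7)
The strategy is to reduce everything to the classical Fundamental Theorem of Algebra applied to $CN(P)(z)$. As noted just before the statement, admissibility of $p$ forces the companion polynomial $CN(P)(z)\in\Ac[z]$ to have \emph{real} coefficients, and it has degree $2m$. So $CN(P)\in\R[z]$ is nonconstant, and the classical FTA in $\C[z]$ groups its roots into (i) real roots $\eta\in\R$ and (ii) complex-conjugate pairs $(\zeta,\bar\zeta)$, $\zeta=\alpha+i\beta\notin\R$, of equal multiplicity. Each pair of type (ii) contributes an irreducible real quadratic factor $(z-\zeta)(z-\bar\zeta)=\Delta_x(z)$, where $x=\alpha+\beta J$ for any $J\in\SS_A$ (the characteristic polynomial depends only on the sphere $\SS_x$, cf.\ the proposition preceding Theorem~\ref{Remainder_Theorem}).

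The technical heart of the proof is to show that each real root $\eta$ of $CN(P)$ appears with \emph{even} multiplicity, making the full factorization a product of characteristic polynomials. By admissibility, $F_1(\eta)\in N_A$; since $CN(P)(\eta)=n(F_1(\eta))=0$, invertibility of nonzero elements of $N_A$ (Proposition~\ref{Properties_of_Q_A}(5)) forces $F_1(\eta)=0$. Together with $F_2(\eta)=0$ (from the even/odd property), this gives $P(\eta)=0$, so $P(z)=(z-\eta)\,Q(z)$ in $\Ac[z]$. Inverting the $2\times 2$ real system $F_1+iF_2=((\alpha-\eta)+i\beta)(Q_1+iQ_2)$ at $z=\alpha+i\beta\neq\eta$ yields
\[
Q_1(z)=\frac{(\alpha-\eta)F_1(z)+\beta F_2(z)}{(\alpha-\eta)^2+\beta^2},\qquad Q_2(z)=\frac{(\alpha-\eta)F_2(z)-\beta F_1(z)}{(\alpha-\eta)^2+\beta^2},
\]
so $\langle Q_1(z),Q_2(z)\rangle\subseteq\langle F_1(z),F_2(z)\rangle\subseteq N_A$, and this extends to $z=\eta$ by continuity. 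Hence $q:=\I(Q)$ is admissible, and I may iterate: $p(x)=(x-\eta)^k\cdot q_k(x)$ with $q_k$ admissible and $q_k(\eta)\neq 0$. Since $(x-\eta)^k$ has real coefficients, $N(p)=(x-\eta)^{2k}\,N(q_k)$, and $N(q_k)(\eta)=n(q_k(\eta))\neq 0$ again by invertibility in $N_A$. So the order of $\eta$ in $CN(P)$ is exactly $2k$.

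Combining the two steps gives $CN(P)(z)=c\prod_{k=1}^{t}\Delta_{x_k}(z)^{n_k}$ for pairwise distinct spheres $\SS_{x_k}$ and positive integers $n_k$ with $\sum_k 2n_k=2m$, i.e.\ $\sum_k n_k=m$. Corollary~\ref{cor13}(2) gives $V(N(p))=\bigcup_k\SS_{x_k}=\bigcup_{y\in V(p)}\SS_y$, so each $\SS_{x_k}$ must meet $V(p)$; in particular $V(p)\neq\emptyset$ since $m>0$ forces $t\geq 1$. Picking any $y_k\in V(p)\cap\SS_{x_k}$, we have $\Delta_{y_k}=\Delta_{x_k}$ and the $\Delta_{x_k}$ are pairwise coprime in $\R[x]$, so Definition~\ref{def_mult} yields $m_p(y_k)=n_k$ independently of the choice of $y_k\in V(p)\cap\SS_{x_k}$. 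Summing, $\sum_{k=1}^t m_p(y_k)=m$.

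The main obstacle is the even-multiplicity step: I crucially exploit the \emph{subspace} form of admissibility (that $\langle F_1(z),F_2(z)\rangle\subseteq N_A$, not merely $F_1(z),F_2(z)\in N_A$ individually), because only this keeps the explicit quotient components $Q_1,Q_2$ inside $N_A$ pointwise, allowing the inductive division by $(x-\eta)$ to iterate until $q_k(\eta)\neq 0$.
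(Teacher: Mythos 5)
Your overall architecture coincides with the paper's: both proofs reduce to the classical Fundamental Theorem of Algebra applied to the real degree-$2m$ polynomial $CN(P)$, and both invoke Corollary~\ref{cor13}(2) to transfer the non-emptiness of $V(N(p))$ back to $V(p)$. Where you genuinely diverge is in the multiplicity count. The paper fixes a slice $\C_J$, counts the $2m$ roots of $N(p)_J$ there, and establishes the identity $m_{N(p)}(y)=2m_p(y)$ by a squaring argument ($N(p)^2=\Delta_y^{2s}g^2$ together with the Remainder Theorem applied to the real factor $g$). You instead factor $CN(P)$ over $\R[z]$ into characteristic polynomials directly, which forces you to isolate the fact that every \emph{real} root of $CN(P)$ has even multiplicity. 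Your route to this --- showing $P(\eta)=0$, dividing the stem polynomial by the central factor $z-\eta$, and iterating --- is concrete, and the conjugate-pair analysis and the final coprimality bookkeeping (including the independence of $m_p(y_k)$ from the choice of $y_k\in\SS_{x_k}\cap V(p)$) are correct.

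The one step you should not wave through is ``this extends to $z=\eta$ by continuity.'' Your formulas show $Q_1(z),Q_2(z)\in\langle F_1(z),F_2(z)\rangle\subseteq N_A$ for $z\ne\eta$, but $N_A$ excludes the nonzero elements with $n(x)=0$, so it need not be a closed subset of $A$: a limit of elements of $N_A$ is only guaranteed to satisfy $n(x)=n(x^c)\in\R$, not $n(x)\ne 0$ or $x=0$. If some iterate $Q_{j,1}(\eta)$ lands on a nonzero isotropic element, your dichotomy ``$n(Q_{j,1}(\eta))=0\Rightarrow Q_{j,1}(\eta)=0$'' fails, the division process stalls, and the order of $\eta$ in $CN(P)$ is no longer forced to be even --- which would break the final count. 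The gap closes under either of two hypotheses the paper itself uses elsewhere: the subspace form of admissibility stated parenthetically in the theorem (if $\langle a_0,\ldots,a_m\rangle\subseteq N_A$, then synthetic division by $z-\eta$ keeps the coefficients of $Q$ in that subspace, and a linear subspace contained in $N_A$ is closed), or the nondegeneracy condition of Proposition~\ref{pro12} and Corollary~\ref{cor15} ($n(x)=n(x^c)\ne0$ whenever $x\ne0$ and $n(x)\in\R$), under which $N_A$ is closed and your continuity argument is valid. Under bare admissibility you need to supply one of these, or another argument, to finish.
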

\begin{proof}
Let $p=\I(P)$. 
As seen before, $N(p)$ is a polynomial with real coefficients and degree $2m$.
Let $J\in\SS_A$. Then the set $V(N(p)_J)=\{z\in\C_J\ |\ N(p)(z)=0\}=\C_J \cap \bigcup_{y \in V(p)}\SS_y$ is non--empty and contains at most $2m$ elements. Corollary~\ref{cor13} tells that $V(p)\cap\SS_y\ne\emptyset$ for every $y$ such that $V_J(N(p))\cap\SS_y\ne\emptyset$. Therefore, $V(p)$ is non--empty.

Let $y\in V(p)$. If there exists $g\in\Sl\Reg(Q_A)$ such that $N(p)=\Delta_y^s\, g$ and $\Delta_y\nmid g$, the slice function $g$ must necessarily be real, since $N(p)$ and $\Delta_y$ are real.   Then 
$N(N(p))=N(p)^2=\Delta_y^{2s}\, g^2$. Therefore $m_{N(p)}(y)\ge 2m_p(y)$. We claim that $m_{N(p)}(y)=2m_p(y)$. If $g^2=\Delta_y\, h$, with $h\in\Sl\Reg(Q_A)$ real, then $G(z)^2=\Delta_y(z)\, H(z)$ on $D$, with $\I(G)=g$ and $\I(H)=h$. Let $y=\alpha+\beta J$, $\zeta=\alpha+i\beta$. Then $G(\zeta)^2=\Delta_y(\zeta)\,H(\zeta)=0$ and therefore $G(\zeta)=0$. This implies that $g$ vanishes on $\SS_y$. From the Remainder Theorem, we get that $\Delta_y\mid g$, a contradiction. Then $\Delta_y\nmid g^2$ and $m_{N(p)}(y)=2m_p(y)$.

Every real zero of $N(p)_J$ corresponds to a real zero of $N(p)$ with the same multiplicity (cf.\ the remark made after Definition~\ref{def_mult}). The other zeros of $N(p)_J$ appears in conjugate pairs and correspond to spherical zeros of $N(p)$. Let $y=\alpha+\beta J$, $y^c=\alpha-\beta J$. If $N(p)_J(y)=N(p)_J(y^c)=0$, then $N(p)\equiv0$ on $\SS_y$. Moreover, since $\Delta_y(x)=(x-y)\cdot(x-y^c)$, the multiplicity $m_{N(p)}(y)$ is the sum of the (equal) multiplicities of $y$ and $y^c$ as zeros of $N(p)_J$. Therefore $2m=\sum_{k=1}^t m_{N(p)}(y_k)=2\sum_{k=1}^t m_p(y_k)$.
\end{proof}

\begin{remark}
Since the multiplicity of a spherical zero is at least $2$, if $r$ denotes the number of real zeros of an admissible polynomial $p$, $i$ the number of $\SS_A$--isolated non--real zeros of $p$ and $s$ the number of ``spheres'' $\SS_y$ ($y\notin\R$) containing spherical zeros of $p$, we have that $r+i+2s \leq \deg(p)$.
\end{remark}

\begin{examples}
$(1)$
Every polynomial $\sum_{j=0}^mx^ja_j$, with paravector coefficients $a_j$ in the Clifford algebra $\R_n$, has $m$ roots counted with their multiplicities in the quadratic cone $Q_A$. If the coefficients are real, then the polynomial has at least one root in the paravector space $\R^{n+1}$, since every ``sphere'' $\SS_y$ intersect $\R^{n+1}$ (cf.\ \cite[Theorem 3.1]{YangQianActa}).

$(2)$ 
In $\R_3$, the polynomial $p(x)=xe_{23}+e_1$ vanishes only at $y=e_{123}\notin Q_A$. Note that $p$ is \emph{not} admissible: $e_1,e_{23}\in N_A$, but $e_1+e_{23}\notin N_A$.

$(3)$
An admissible polynomial of degree $m$, even in the case of non spherical zeros, can have more than $m$ roots in the whole algebra. For example, $p(x)=x^2-1$ has \emph{four} roots in $\R_3$, two in the quadratic cone ($x=\pm1$) and two outside it ($x=\pm e_{123}$).

$(4)$ 
In $\R_3$, the admissible polynomial $p(x)=x^2+xe_3+e_2$ has two isolated zeros 
\[y_1=\frac12(1- e_2-e_3+e_{23}),\ y_2=\frac12(-1+e_2-e_3+e_{23})\]
in $Q_A\setminus\R^4$. They can be computed by solving the complex equation $CN(P)=z^4+z^2+1=0$ ($\I(P)=p$) to find the two ``spheres'' $\SS_{y_1}$, $\SS_{y_2}$ and then using the Remainder Theorem (Theorem~\ref{Remainder_Theorem}) with $\Delta_{y_1}=x^2-x+1$ and $\Delta_{y_2}=x^2+x+1$
(cf.\ \cite[Example 3]{YangQianActa}).

$(5)$ 
The reality of $N(p)$ is not sufficient to get the admissibility of $p$ (cf.\ Proposition~\ref{pro12}). For example, the polynomial
$p(x)=x^2e_{123}+x(e_1+e_{23})+1$ has real normal function $N(p)=(x^2+1)^2$, but the spherical derivative $\sd p=t(x)e_{123}+e_1+e_{23}$ has $N(\sd p)
$ not real. In particular, $\sd p(J)=e_1+e_{23}\notin N_A$ for every $J\in\SS_A$. The non--admissibility of $p$ is reflected by the existence of two distinct zeros on $\SS_A$, where $p$ does not vanish identically: $p(e_1)=p(e_{23})=0$. 
\end{examples}

\section{Cauchy integral formula for $C^1$ slice functions}
\label{sec:CauchyIntegralFormulaForSliceFunctions}
Let $\Sl^1(\overline \OO_D):=\{f=\I(F)\in\Sl(\OO_D)\ |\ F\in C^1(\overline D)\}$, where $\overline D$ denotes the topological closure of $D$.
For a fixed element $y=\alpha'+\beta' J\in Q_A$, let $\zeta=\alpha'+i\beta'\in\C$. The characteristic polynomial $\Delta_y$ is a real slice regular function on $Q_A$, with zero set $\SS_y$. For every $x=\alpha+\beta I\in Q_A\setminus \SS_y$, $z=\alpha+i\beta$, define
\[C_A(x,y):=\I\left(-\Delta_y(z)^{-1}(z-y^c)\right).\]
$C_A(x,y)$ is slice regular on $Q_A\setminus \SS_y$ and has the following property:
\[C_A(x,y)\cdot (y-x)=\I\left(-\Delta_y(z)^{-1}(z-y^c)(y-z)\right)=\I(1)=1.\]
This means that $C_A(x,y)$ is a slice regular inverse of $y-x$ w.r.t.\ the product introduced in Section~\ref{sec:Productofslicefunctions}. Moreover, when $x\in\C_J$ the product coincides with the pointwise one, and therefore
\[C_A(x,y)=(y-x)^{-1}\text{\quad for $x,y\in\C_J, x\ne y,y^c$}.\]
For this reason, we will call $C_A(x,y)$ the \emph{Cauchy kernel for slice regular functions on $A$}.
This kernel was introduced in \cite{CoGeSaPreprint2008} for the quaternionic case and in \cite{CoSaPreprint2008} for slice monogenic functions on a Clifford algebra. In \cite{CoSaStPreprint2009Mich}, the kernel was applied to get Cauchy formulas for $C^1$ functions (also called \emph{Pompeiu formulas}) on a class of domains intersecting the real axis. Here we generalize these results to $A$--valued slice functions of class $\Sl^1(\overline\OO_D)$.

\begin{theorem}[Cauchy integral formula]
Let $f\in\Sl^1(\overline \OO_D)$. Let $J\in\SS_A$ be any square root of $-1$. Assume that 
 $D_J=\OO_D\cap \C_J$ is a relatively compact open subset of $\C_J$, with boundary $\partial D_J$ piecewise of class $C^1$.
Then, for every $x\in \OO_D$ (in the case in which $A$ is associative) or every $x\in D_J$ (in the case in which $A$ is not associative), the following formula holds:
\[f(x)=\frac1{2\pi}\int_{\partial D_J}C_A(x,y)J^{-1}{dy}\,f(y)-\frac1{2\pi}\int_{D_J}C_A(x,y)J^{-1}{{dy^c}\wedge dy}\,\dd{f}{y^c}(y).\]
\end{theorem}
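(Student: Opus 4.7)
\medskip
\noindent\textbf{Proof plan.} The strategy is to establish the formula first for $x$ on the slice $\C_J$, where it collapses to the classical Cauchy--Pompeiu formula, and then, in the associative case, to extend to all $x\in\OO_D$ by invoking uniqueness of slice functions (Proposition~\ref{pro4}).

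\medskip
\emph{Step 1 (the slice case $x\in D_J$).} For $x,y\in\C_J$ one has $C_A(x,y)=(y-x)^{-1}$, while $J^{-1}dy$ is a $\C_J$-valued $1$-form and $J^{-1}(dy^c\wedge dy)=2\,d\alpha\wedge d\beta$ is a \emph{scalar} $2$-form (since $dy^c\wedge dy=2J\,d\alpha\wedge d\beta$). The claimed identity is therefore exactly the classical Cauchy--Pompeiu formula on $D_J\subset\C_J$ applied to the $A$-valued $C^1$ map $f|_{\overline{D_J}}$, with $J$ in the role of the imaginary unit. I would derive it by applying Stokes's theorem to the $A$-valued $1$-form $\omega=\tfrac{dy}{y-x}\,f(y)$ on $D_J\setminus B_\varepsilon(x)$. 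Since $\tfrac{dy}{y-x}$ is closed on $\C_J\setminus\{x\}$, one gets $d\omega=-\tfrac{dy}{y-x}\wedge df$, and a direct computation using $dy\wedge df=(\partial_\beta f-J\partial_\alpha f)\,d\alpha\wedge d\beta=-2J\,\dd{f}{y^c}\,d\alpha\wedge d\beta$ reduces this to $d\omega=\tfrac{2J}{y-x}\,\dd{f}{y^c}\,d\alpha\wedge d\beta$. On the small circle $y=x+\varepsilon e^{J\theta}$ one finds $\tfrac{dy}{y-x}=J\,d\theta$, so the inner-boundary integral tends to $2\pi J\,f(x)$ as $\varepsilon\to 0$. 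Rearranging and using $\tfrac{1}{2\pi J}=\tfrac{J^{-1}}{2\pi}$ together with $2\,d\alpha\wedge d\beta=J^{-1}(dy^c\wedge dy)$ produces precisely the formula in the statement.

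\medskip
\emph{Step 2 (extension to $\OO_D$ when $A$ is associative).} Let $g(x)$ denote the right-hand side of the claimed formula. I want to show that $g$ is a slice function on $\OO_D$, for then $g=f$ on $D_J$ by Step~1 and Proposition~\ref{pro4} (with $K=-J$) forces $g\equiv f$ on $\OO_D$. For each fixed $y$, the map $x\mapsto C_A(x,y)$ is slice regular on $Q_A\setminus\SS_y$ with stem function $-\Delta_y(z)^{-1}(z-y^c)$. When $A$ is associative, right multiplication of a slice function by a constant $a\in A$ preserves the slice decomposition, that is $\I(G)\cdot a=\I(Ga)$, and $Ga$ is still complex intrinsic because complex conjugation on $\Ac$ fixes $A$ and commutes with right multiplication. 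Hence at each point $y\in\partial D_J$ (respectively $y\in D_J$) the integrand is the value at $x$ of a slice regular function whose stem is explicit; interchanging integration with $\I$ realizes $g(x)$ itself as $\I(H)(x)$ for a complex-intrinsic stem function $H$ built from the integrated stem. For $x\in\OO_D$ the boundary integrand is continuous (since $\OO_D$ open forces $x\notin\SS_y$ for $y\in\partial D_J$), while the area integrand has only a first-order pole in $y$ at the at most two points of $D_J$ lying on $\SS_x$, hence is integrable in two real dimensions.

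\medskip
\emph{Main obstacle.} The delicate point throughout is controlling the noncommutativity between $J$ and $A$-valued quantities in the Stokes computation: although $(y-x)^{-1}$, $J$ and $dy$ all commute (being in $\C_J$), $f(y)$ does not commute with $J$ in general. The crucial cancellation is that $J^{-1}(dy^c\wedge dy)$ is a \emph{scalar} $2$-form, which removes the ambiguity in the ordering of the $A$-valued factor $\dd{f}{y^c}$ and is what allows the Cauchy--Pompeiu identity to be written unambiguously in the form given. In the non-associative setting, the identity $\I(G)\cdot a=\I(Ga)$ used in Step~2 fails---since $I(F_2(z)a)\neq(IF_2(z))a$ in general---so the extension argument breaks down and the theorem must be restricted to $x\in D_J$, exactly as in the statement.
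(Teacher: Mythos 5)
Your proposal is correct and follows essentially the same route as the paper: establish the identity on the slice $D_J$ as the classical Cauchy--Pompeiu formula, then extend to $\OO_D$ in the associative case via the representation/uniqueness machinery of Propositions~\ref{pro4} and~\ref{pro5} (the paper substitutes the slice formulas for $f(x')$, $f(x'')$ into the representation formula rather than arguing that the right-hand side is itself a slice function, but these are two phrasings of the same mechanism). The one point you leave implicit in Step~1 is the justification of rearrangements such as $\left((y-x)^{-1}J\right)\partial_\alpha f=(y-x)^{-1}\left(J\,\partial_\alpha f\right)$, which are not automatic in a merely alternative algebra; the paper handles this by decomposing $F$ along a basis $\{u_k\}$ and observing that each componentwise computation takes place in the associative subalgebra generated by $J$ and $u_k$ (Artin's Theorem), and your direct Stokes computation on the $A$-valued form needs the same appeal to Artin to be rigorous.
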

\begin{proof}
Let $x=\alpha+\beta J$ belong to $D_J$. Let $z=\alpha+i\beta$ and $f=\I(F)$. Let $F=F_1+iF_2=\sum_{k=1}^d F^ku_k$, with $F^k\in C^1(\overline D,\C)$, be the  representation of the stem function $F$ w.r.t.\ a basis $\B=\{u_k\}_{k=1,\ldots, d}$ of $A$. Denote by $\phi_J:\C\rightarrow \C_J$  the isomorphism sending $i$ to $J$. Define $F^k_J:=\phi_J\circ F^k\circ \phi_J^{-1}\in C^1(\overline{D}_J,\C_J)$. If $F_1^k,F_2^k$ are the real components of $F^k$, then $F_1=\sum_k F^k_1u_k$, $F_2=\sum_k F^k_2u_k$. Moreover, $F^k_J(x)=(\phi_J\circ F^k)(z)=F^k_1(z)+JF^k_2(z)$, from which it follows that $f(x)=F_1(z)+JF_2(z)=\sum_k F_J^k(x)u_k$.

The (classical) Cauchy formula applied to the $C^1$ functions $F^k_J$ ($k=1,\ldots,d$) on the domain $D_J$ in  $\C_J$,  gives:
\[F^k_J(x)=\frac1{2\pi J}\int_{\partial D_J}(y-x)^{-1}F^k_J(y)\,dy-\frac1{2\pi J}\int_{D_J}(y-x)^{-1}\dd{F^k_J(y)}{\bar y}\,d\bar y\wedge dy.\]
On $\C_J$ the conjugate $\bar y=\alpha-\beta J$ coincides with $y^c$ and the slice regular Cauchy kernel $C_A(x,y)$ is equal to the classical Cauchy kernel $(y-x)^{-1}$. Then we can rewrite the preceding formula as
\[F^k_J(x)=\frac1{2\pi}\int_{\partial D_J}C_A(x,y)\,J^{-1}dy\,F^k_J(y)-\frac1{2\pi}\int_{D_J}C_A(x,y)\, J^{-1}dy^c\wedge dy\,\dd{F^k_J(y)}{y^c}.\]

We use Artin's Theorem for alternative algebras.
For any $k$, the coefficients of the forms in the integrals above and the element $u_k$ belong to the subalgebra generated by $J$ and $u_k$. Using this fact and the equality \[\sum_k\dd{F^k_J}{y^c}u_k=\sum_k\dd{F^k_J}{\bar y}u_k=\I\left(\dd{F}{\bar \zeta}\right)=\dd {f}{y^c},\]
 after summation over $k$ we can write
\begin{align}\label{CF}
f(x)&=\sum_k F_J^k(x)u_k=
\\
&=\frac1{2\pi}\int_{\partial D_J}C_A(x,y)J^{-1}{dy}\,f(y)-\frac1{2\pi}\int_{D_J}C_A(x,y)J^{-1}{{dy^c}\wedge dy}\,\dd{f}{y^c}(y).\notag
\end{align}

Now assume that $A$ is associative and that $x=\alpha+\beta I\in\OO_D\setminus\C_J$. Let $x'=\alpha+\beta J$, $x''=\alpha-\beta J\in D_J$. Since $f$ and $C_A(\,\cdot\, ,y)$ are slice functions on $\OO_D$, their values at $x$ can be expressed by means of the values at $x'$ and $x''$  (cf.\ the representation formula \eqref{rep2}):
\[f(x)=\frac12\left(f(x')+f(x'')\right)-\frac{I}2\left(J\left(f(x')-f(x'')\right)\right)\]
and similarly for $C_A(x,y)$. By applying the Cauchy  formula $\eqref{CF}$ for $f(x')$ and $f(x'')$ and the representation formulas, we get that formula $\eqref{CF}$ is valid also at $x$. The associativity of $A$ allows to pass from the representation formula for $f(x)$ to the same formula for $C_A(x,y)$ inside the integrals.
\end{proof}

\begin{corollary}[Cauchy representation formula]
If $f\in\Sl^1(\overline \OO_D)$ is slice regular on $\OO_D$, then, for every $x\in \OO_D$ (in the case where $A$ is associative) or every $x\in D_J$ (in the case where $A$ is not associative),  
\[f(x)=\frac1{2\pi}\int_{\partial D_J}C_A(x,y)J^{-1}{dy}\, f(y).\]
If $f$ is real, the formula is valid for every $x\in\OO_D$ also when $A$ is not associative.
\end{corollary}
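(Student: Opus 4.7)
The plan is to derive the corollary immediately from the preceding Cauchy integral formula by killing the area term with slice regularity. Since $f\in\SR$, its stem function $F$ is holomorphic, so $\partial F/\partial\bar z\equiv 0$ on $D$. By Remarks~\ref{rem2}(1), this means $\partial f/\partial x^c=\I(\partial F/\partial\bar z)\equiv 0$ on $\OO_D$, so the integrand of the area integral in the preceding theorem vanishes identically. Only the boundary term survives, and it already gives the correct value for every $x\in\OO_D$ in the associative case and for every $x\in D_J$ in the non--associative case.

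It remains to justify the final assertion: when $f$ is real and $A$ is not necessarily associative, the formula extends from $x\in D_J$ to every $x\in\OO_D$. The obstruction in the non--associative setting of the preceding theorem was the need to interchange the representation formula~\eqref{rep2} with the triple product $C_A(x,y)\cdot J^{-1}dy\cdot f(y)$ appearing inside the integral. When $f$ is real, this difficulty evaporates: the characterization of real slice functions forces $f(y)\in\C_J$ for every $y\in\C_J$, while $J^{-1}dy\in\C_J$; and for $y\in\C_J$ the stem function $-\Delta_y(z)^{-1}(z-y^c)$ of $C_A(\cdot,y)$ takes values in $\C_J\otimes\C$ (since $\Delta_y$ has real coefficients and $y^c\in\C_J$), so both of its components lie in $\C_J$.

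For $x=\alpha+\beta I\in\OO_D\setminus D_J$, the step is then to set $x'=\alpha+\beta J$ and $x''=\alpha-\beta J\in D_J$, apply the already established Cauchy representation formula at $x'$ and $x''$, and combine these via the representation formula~\eqref{rep2} applied simultaneously to $f(x)$ and to $C_A(x,y)$. All the factors $I$, $J$, $C_A(x',y)$, $C_A(x'',y)$, $J^{-1}dy$ and $f(y)$ that enter the resulting expression belong to the subalgebra generated by $I$ and $J$, which is associative by Artin's theorem. Consequently, the associativity step that was unavailable in general in the preceding theorem is now legitimate, and the combination collapses to the claimed formula at $x$.

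The only genuine obstacle I foresee is bookkeeping: checking at each grouping of products that one never leaves the associative subalgebra $\langle I,J\rangle$. Once this verification is carried out, the extension argument simply mimics the associative case of the Cauchy integral formula, and the proof is complete.
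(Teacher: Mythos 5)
Your proposal is correct and follows essentially the same route as the paper: the first claim is obtained by noting that slice regularity kills the area integral in the Cauchy integral formula, and the real, non-associative case is handled exactly as in the paper by applying formula \eqref{rep2} to $f(x)$ and to $C_A(x,y)$, using reality to place $f(y)$ in $\C_J$ and Artin's theorem on the subalgebra $\langle I,J\rangle$ to justify the regrouping of products.
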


\begin{proof}
The first statement is immediate from the Cauchy integral formula. If $f$ is real and $A$ is not associative, for every $x\in\C_I\cap\OO_D$ we have, using the same notation of the proof of the theorem,
\begin{align*}
f(x)&=\frac12\left(f(x')+f(x'')\right)-\frac{I}2\left(J\left(f(x')-f(x'')\right)\right)=\\
&=\frac1{2\pi}\int_{\partial D_J}\frac{C_A(x',y)+C_A(x'',y)}2J^{-1}{dy}\,f(y)-\\
&\quad-I\left(\frac{J}{2\pi}\int_{\partial D_J}\frac{C_A(x',y)-C_A(x'',y)}2J^{-1}{dy}\,f(y)\right)=\\
&=\frac1{2\pi}\int_{\partial D_J}\left(\frac{C_A(x',y)+C_A(x'',y)}2-I\left(J\frac{C_A(x',y)-C_A(x'',y)}2\right)\right)J^{-1}{dy}\,f(y).
\end{align*}
The last equality is a consequence of the reality of $f$. If $y\in\C_J$, then $f(y)\in\C_J$ and $I(Jf(y))=(IJ)f(y)$ from Artin's Theorem. The representation formula applied to $C_A(x,y)$ gives the result.
\end{proof}

A version of the Cauchy representation formula for quaternionic power series was proved in \cite{GeSt2007Adv} and extended in \cite{CoGeSaPreprint2008}. For slice monogenic functions on a Clifford algebra, it was given in \cite{CoSaPreprint2008}.




\end{document}